\newcommand*\circled[1]{\tikz[baseline=(char.base)]{
    \node[shape=circle,draw,inner sep=1pt] (char) {#1};}}
\newcommand{\TransP}{\mathsf{Trans}}
\newcommand{\TP}{\mathsf{Tes}}
\newcommand{\FP}{\mathsf{Flow}}
\newcommand{\e}{\mathsf{e}}
\title{The polytope of Tesler matrices}
\author{Karola M\'esz\'aros}\address{Department of Mathematics, Cornell University, Ithaca NY}
\email{karola@math.cornell.edu}
\author{Alejandro H. Morales}\address{Department of Mathematics,
  University of California, Los Angeles, Los Angeles CA}
\email{ahmorales@math.ucla.edu}
\author{Brendon Rhoades}\address{Department of Mathematics, University
  of California, San Diego, La Jolla CA}
\email{bprhoades@math.ucsd.edu}
\thanks{M\'esz\'aros was partially supported by NSF Postdoctoral
  Research Fellowship DMS-1103933 and NSF Grant DMS-1501059. Morales was supported by a
  postdoctoral fellowship from CRM-ISM and LaCIM. Rhoades was partially supported by NSF grant DMS-1068861.}
\definecolor{darkgreen}{rgb}{0,0.7,0}
\definecolor{purplish}{rgb}{0.5,0,0.8}
\definecolor{cobalt}{rgb}{0.0, 0.28, 0.67}
\definecolor{auburn}{rgb}{0.43, 0.21, 0.1}
\newcommand{\RR}{\mathbb{R}}
\DeclareMathOperator{\dinv}{dinv}
\DeclareMathOperator{\area}{area}
\DeclareMathOperator{\bounce}{bounce}
\newcommand{\vol}{\operatorname{vol}}
\newcommand{\CT}{\operatorname{CT}}
\newcommand{\CTn}[1]{\CT_{x_{#1}}\cdots\CT_{x_1}}
\newcommand{\Res}{\operatorname{Res}}
\newcommand{\ZZ}{{\mathbb{Z}}}
\newcommand{\aaa}{{\bf a}}
\declaretheorem[numberwithin=section]{theorem}
\declaretheorem[numberlike=theorem]{lemma}
\declaretheorem[numberlike=theorem]{proposition}
\declaretheorem[numberlike=theorem]{corollary}
\declaretheorem[numberlike=theorem, style=definition]{remark}
\declaretheorem[numberlike=theorem, style=definition]{example}
\numberwithin{equation}{section} 
\begin{document}

\begin{abstract} We introduce the Tesler polytope $\TP_n({\bf a})$, whose integer points are the
Tesler matrices of size $n$ with hook sums $a_1,a_2,\ldots,a_n \in
\mathbb{Z}_{\geq 0}$. 
We show that $\TP_n({\bf a})$ is a flow polytope and therefore 
the number of Tesler matrices is 
counted by the type $A_n$ Kostant partition function evaluated at
$(a_1,a_2,\ldots,a_n,-\sum_{i=1}^n a_i)$. We describe the faces of
this polytope in terms of ``Tesler tableaux'' and characterize when
the polytope is simple. We prove that the $h$-vector of $\TP_n({\bf
  a})$ when all $a_i>0$ is given by the Mahonian numbers and calculate the volume of $\TP_n(1,1,\ldots,1)$ to be a product of consecutive Catalan numbers multiplied by the number of standard Young tableaux of staircase shape.  
\end{abstract}

\maketitle

\section{Introduction}\label{sec:intro}

Tesler matrices have played a major role in the works 
\cite{AGHRS}\cite{GHX}\cite{GoN}\cite{Hag}\cite{HRW}\cite{W} in the context of diagonal harmonics. We examine them from a
different perspective in this paper: we study the polytope, which we
call the Tesler polytope, consisting of upper triangular matrices with
nonnegative real entries with the same restriction as Tesler matrices
on the hook sums: sum of the elements of a row minus the sum of the elements of a
column. Then the integer points of this polytope are all Tesler
matrices of given hook sums.  We show that these polytopes are
flow polytopes and are faces of transportation polytopes. We
characterize the Tesler polytopes with nonnegative hook sums that are simple
and we calculate their $h$-vectors. If the hook sums are all $1$ the volume is the product of consecutive Catalan numbers multiplied by the number of standard Young tableaux of staircase shape. This result raises the question of the Tesler polytope's connection to the Chan-Robbins-Yuen polytope,  a flow polytope whose volume is the  product of consecutive Catalan numbers. 

We now proceed to give the necessary definitions and state our main results. This section is broken down into three subsections for 
ease of reading: introduction to Tesler matrices and polytopes,
introduction to flow polytopes and transportation polytopes, 
and our main results regarding Tesler polytopes. Section~\ref{h-section} and
Section~\ref{volume-section} are independent of each other, the first one is about
the face structure and the other is about the volume of Tesler
polytopes. Finally, in Section~\ref{final-remarks} we discuss some
final remarks and questions.

\subsection{Tesler matrices and polytopes} Let
$\mathbb{U}_n(\mathbb{R}_{\geq 0})$ be the set of $n \times n$ upper triangular
matrices with nonnegative real entries. The $k^\textsuperscript{th}$
{\bf hook sum} of a matrix $(x_{i,j})$ in
$\mathbb{U}_n(\mathbb{R}_{\geq 0})$ is the sum of all the elements of the $k\textsuperscript{th}$ row minus the
  sum of the elements in the $k\textsuperscript{th}$ column excluding the term in
  the diagonal:
\[
x_{k,k}+x_{k,k+1}+\cdots + x_{k,n} - \left(x_{1,k}+x_{2,k}+\cdots+x_{k-1,k}\right)
\]
Given a length $n$ vector ${\bf a} = (a_1, a_2,\ldots,a_n) \in (\ZZ_{\geq 0})^n$ of nonnegative integers, the
  {\bf Tesler polytope} $\TP_n({\bf a)}$ with hook sums ${\bf a}$ is the set 
  of matrices in $\mathbb{U}_n(\mathbb{R}_{\geq 0})$ where the
  $k\textsuperscript{th}$ hook sum equals $a_k$, for $k=1,\ldots,n$:
\[
\TP_n({\bf a}) = \{ (x_{i,j}) \in
\mathbb{U}_n(\mathbb{R}_{\geq 0}) \,:\,   x_{k,k}+\sum_{\mathclap{j=k+1}}^n
x_{k,j} - \sum_{i=1}^{k-1} x_{i,k} = a_k, 1\leq k \leq n \}.
\]

The lattice points of $\TP_n({\bf a})$ are called {\bf
    Tesler matrices} with hook sums ${\bf a}$. These are $n\times n$ upper triangular matrices $B=(b_{i,j})$ with nonnegative
integer entries such that  for $k=1,\ldots,n$, $b_{k,k} + \sum_{j=k+1}^n b_{k,j} - \sum_{i=1}^{k-1} b_{i,k} = a_k$.
The set and number of such matrices are denoted by $\mathcal{T}_n({\bf a})$ and
$T_n({\bf a})$ respectively. See
Figure~\ref{fig:teslerpolygraph} for an example of the seven Tesler
matrices in $\mathcal{T}_3(1,1,1)$. 

Tesler matrices
appeared recently in Haglund's study of diagonal harmonics \cite{Hag}
and their combinatorics and further properties were explored in
\cite{AGHRS}\cite{GHX}\cite{PL}\cite{W}. The flavor of the results obtained for Tesler matrices
in connection with diagonal harmonics is illustrated by the following
example. Let $\mathcal{H}(DH_n, q,t)$ denote the bigraded Hilbert series of the
space of diagonal harmonics $DH_n$. For more details regarding this polynomial in
$\mathbb{N}[q,t]$ we refer the reader to \cite[Ch. 5]{Hag2}. 


\begin{figure}
\includegraphics{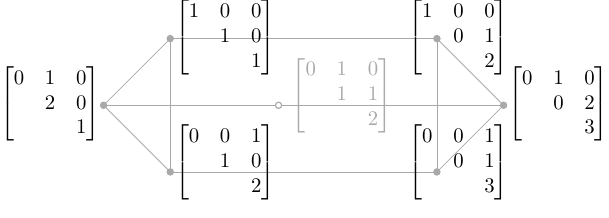}
\caption{The seven $3\times 3$ Tesler matrices with hook sums
  $(1,1,1)$. Six of them are vertices of the graph (depicted in gray) of the Tesler polytope $\TP_n(
    1,1,1)$.}
\label{fig:teslerpolygraph}
\end{figure}

\begin{example} \label{ex:tesler}
When ${\bf a} = {\bf 1}:=(1,1,\ldots,1) \in \mathbb{Z}^n$, Haglund \cite{Hag} 
showed that 
\begin{equation}\label{eq:haglund}
\mathcal{H}(DH_n,q,t) = 
\sum_{A \in \mathcal{T}_n(1,1,\ldots,1)} wt(A),
\end{equation}
where 
\begin{equation} \label{eq:Hwt}
wt(A) =\frac{1}{(-M)^n}\prod_{i,j \,:\, a_{ij}>0} (-M)[a_{ij}]_{q,t},
\quad M = (1-q)(1-t), \quad  [b]_{q,t} = \frac{q^b-t^b}{q-t}.
\end{equation}
\end{example}

The starting point for our investigation is the observation stated in the next lemma. 

\begin{lemma} \label{lem:flow} The Tesler polytope $\TP_n({\bf a})$ is 
a flow polytope $\FP_n({\bf a})$,
\begin{equation} \label{eq:TesPoly-to-FlowPoly}
\TP_n({\bf a}) \cong  \FP_n({\bf a}).
\end{equation}
\end{lemma}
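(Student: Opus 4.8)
The plan is to exhibit an explicit change of coordinates under which the equations cutting out $\TP_n(\mathbf{a})$ become precisely the flow-conservation equations defining the flow polytope. Recall that $\FP_n(\mathbf{a})$ lives on the complete graph $K_{n+1}$ with vertex set $\{1,2,\ldots,n+1\}$, each edge oriented from its smaller to its larger endpoint; a point is an assignment of a nonnegative flow $f_{i,j}$ to each edge $(i,j)$, $1\leq i<j\leq n+1$, subject to conservation with net outflow $a_k$ at each vertex $k\leq n$, and hence net inflow $\sum_{i=1}^n a_i$ at the sink $n+1$. The first observation is a dimension count: a matrix in $\mathbb{U}_n(\mathbb{R}_{\geq 0})$ has $\binom{n+1}{2}$ free entries $x_{i,j}$ with $1\leq i\leq j\leq n$, which is exactly the number of edges of $K_{n+1}$, so there is room for a coordinatewise identification.

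Next I would write down the bijection on coordinates. Given $(x_{i,j})\in\TP_n(\mathbf{a})$, I send each diagonal entry to the edge running into the added sink, setting $f_{k,n+1}:=x_{k,k}$, and each strictly-upper-triangular entry to the corresponding interior edge, setting $f_{i,j}:=x_{i,j}$ for $1\leq i<j\leq n$. Nonnegativity of the matrix entries corresponds termwise to nonnegativity of the flow, so this is a linear isomorphism of the ambient nonnegative orthants. The heart of the verification is that the two linear systems coincide: the conservation law at vertex $k\leq n$ reads $\sum_{j=k+1}^{n} f_{k,j}+f_{k,n+1}-\sum_{i=1}^{k-1} f_{i,k}=a_k$, and substituting the definitions above rewrites the left-hand side as $x_{k,k}+\sum_{j=k+1}^n x_{k,j}-\sum_{i=1}^{k-1} x_{i,k}$, which is exactly the $k$th hook sum. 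Thus each conservation equation is literally the corresponding hook-sum equation, and the map restricts to a bijection $\TP_n(\mathbf{a})\to\FP_n(\mathbf{a})$.

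The one point requiring comment, rather than a genuine obstacle, is the conservation equation at the sink $n+1$: it is not imposed separately in $\TP_n(\mathbf{a})$ but follows automatically, since in any flow the net flows over all vertices sum to zero, forcing the sink's net flow to be $-\sum_{i=1}^n a_i$ once the other $n$ are fixed. Since the correspondence is an invertible linear relabeling of the entries, it is an affine isomorphism of polytopes, carrying vertices to vertices and faces to faces, which establishes $\TP_n(\mathbf{a})\cong\FP_n(\mathbf{a})$. There is no substantive difficulty here; the only care needed is the indexing bookkeeping, and in particular recognizing that the diagonal entries of the Tesler matrix must be matched with the $n$ edges directed into the sink vertex $n+1$, with the off-diagonal entries filling out the edges among $\{1,\ldots,n\}$.
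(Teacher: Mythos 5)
Your proposal is correct and is essentially the same argument as the paper's: you define exactly the map used there, sending each diagonal entry $x_{k,k}$ to the flow $f(k,n+1)$ on the edge into the sink and each off-diagonal entry $x_{i,j}$ to $f(i,j)$, which is a permutation of coordinates identifying the hook-sum equations with the flow-conservation equations. Your write-up just spells out the verification of the conservation laws more explicitly than the paper, which simply notes that the map permutes coordinates and hence is a volume-preserving bijection.
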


We now define flow polytopes to make Lemma \ref{lem:flow} clear. For an illustration of the correspondence of polytopes in Lemma \ref{lem:flow} see  Figure~\ref{fig:exAll}. 

\begin{figure}
\includegraphics{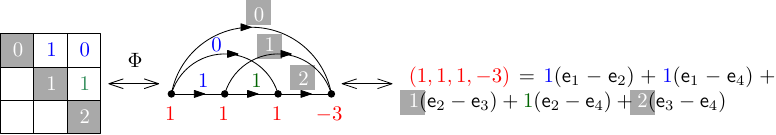}
\caption{Correspondence between a $3\times 3$ Tesler matrix with hook sums
  $(1,1,1)$, an integer flow in the complete graph $k_4$ and a vector
  partition of $(1,1,1,-3)$ into $\e_i-\e_j$ $1\leq i<j\leq 4$.}
\label{fig:exAll}
\end{figure}

\subsection{Flow polytopes} Given ${\bf a} = (a_1,a_2,\ldots,a_n)$, let
  $\FP_n({\bf a})$ be the {\bf flow polytope} of the
  complete graph $k_{n+1}$ with netflow $a_i$ on vertex $i$ for $i=1,\ldots,n$ and the netflow on vertex $n+1$ is  $-\sum_{i=1}^n
  a_i$. This polytope is the set of  functions  $f:E\to
  \mathbb{R}_{\geq 0}$, called flows,  from the edge set $E=\{(i,j) \,:\, 1\leq i<j\leq
  n+1\}$ of $k_{n+1}$ to the set
  of nonnegative real numbers such that for $k=1,\ldots,n$,
  $\sum_{j>s} f(k,j) - \sum_{i<k} f(i,k) = a_k$. This forces
  $\sum_{i=1}^n f(i,n+1) = \sum_{i=1}^n a_i$. We can write 
  $\FP_n({\bf a})=\{{\bf x} \in \mathbb{R}_{\geq 0}^{{n+1 \choose 2}}
  | A_{k_{n+1}} {\bf x}=({\bf a},  -\sum_{i=1}^n a_i)^T\}$, where
  $A_{k_{n+1}}$ is the matrix with columns $\e_i-\e_j$ for
  each edge $(i,j)$ of $k_{n+1}$, $1\leq i<j\leq n+1$. It is then evident that  the vertices of $\FP_n({\bf a})$ are integral, since $A_{k_{n+1}}$ is unimodular. 

\begin{proof}[Proof of Lemma~\ref{lem:flow}]  
Let $\Phi: \TP_n({\bf a}) \to  \FP_n({\bf a})$ defined by
$\Phi:X=(x_{i,j})\mapsto f_X$ where $f_X(i,j)=\begin{cases} x_{i,j}  \text{ if
  } j \neq n+1\\
x_{i,i}\text{ if } j=n+1
\end{cases}$. The map $\Phi$ is a linear transformation that simply permutes the coordinates
of $\TP_n({\bf a})$. Therefore the determinant of $\Phi$ is $\pm1$ and it
follows that $\Phi$ is a volume preserving bijection between the
polytopes.
\end{proof} 

The type
$A_n$ {\bf Kostant
partition function} $K_{A_n}({\bf a'})$ is the number of ways of writing
${\bf a'}:=({\bf a},  -\sum_{i=1}^n a_i)$ as an $\mathbb{N}$-combination of the type $A_n$ positive
roots $\e_i-\e_j$, $1\leq i<j\leq n+1$ without regard to order. Kostant partition functions are very useful in representation theory for
calculations of weight multiplicities and tensor product
multiplicities. The value $K_{A_n}({\bf a'})$ is
also the number of lattice points of the polytope $\FP_n({\bf
  a})$, i.e. integral flows in the complete graph $k_{n+1}$ with
netflow $a_i$ on vertex $i$ (see Figure \ref{fig:exAll} for an example). Thus the following lemma is immediate from Lemma \ref{lem:flow}.

\begin{lemma} \label{lem:tes} The number of Tesler matrices with hook sums $(a_1,a_2,\ldots,a_n)$ is given by the value   of the Kostant partition function at $(a_1,\ldots,a_n,-\sum_{i=1}^n a_i)$,
\begin{equation} \label{eq:Tes-to-Kpf}
T_n({\bf a}) = K_{A_n}({\bf a'}).
\end{equation}
\end{lemma}

In the next example we include a brief discussion of another flow polytope of the complete graph, namely,  $\FP_n({1, 0, \ldots, 0})$.
  
\begin{example} \label{ex:cry}
The polytope $\FP_{n}(1,0,\ldots,0)$ is known as the {\bf
  Chan-Robbins-Yuen polytope}. It has dimension $\binom{n}{2}$ and
$2^{n-1}$ vertices. Stanley-Postnikov (unpublished), and Baldoni-Vergne
\cite{BV2,BVMorris} proved that the normalized volume of this polytope is given by a value of
the Kostant partition function (see \eqref{eq:vol})
\[
\vol \FP_{n}(1,0,\ldots,0) =
K_{A_{n-1}}(0,1,2,\ldots,n-2,-{\textstyle \binom{n-1}{2}}).
\]
Then Zeilberger \cite{Z} used a variant of the Morris constant term identity \cite{WM}
to compute this value of the Kostant partiton
function as the product of the first $n-2$ Catalan numbers,
proving a conjecture of Chan, Robbins and Yuen \cite{CR,CRY}.
\begin{equation} \label{eq:z}
 K_{A_{n-1}}(0,1,2,\ldots,n-2,-{\textstyle \binom{n-1}{2}}) = \prod_{i=0}^{n-2} \frac{1}{i+1}\binom{2i}{i}.
\end{equation}
\end{example}
 
A Tesler polytope or flow
polytope is itself a
face of a well known kind of polytope called a transportation polytope
which we define next.

\subsection{Transportation polytopes} Given a vector ${\bf
  s}=(s_1,s_2,\ldots,s_n)$ of nonnegative integers, the {\bf transportation polytope}\footnote{In the literature transportation polytopes
  are more general \cite{KW}. The matrices can be rectangular and the $i\textsuperscript{th}$ row sum
  and the $i\textsuperscript{th}$ column sum can differ.}
$\TransP_n({\bf s})$ is the set of
all $n\times n$ matrices $M=(m_{i,j})$ with nonnegative real entries
whose $i\textsuperscript{th}$ row and $i\textsuperscript{th}$ column respectively sum to $s_i$, 
for $i=1,\ldots,n$. When all the $s_i$ equal one, the polytope
$\TransP_n(1,1,\ldots,1)$ is better known as the {\bf Birkhoff polytope}. 
Next we show that the flow polytope $\FP_n({\bf a})$ is isomorphic to a face of the
transportation polytope $\TransP_n(a_1,a_1+a_2,\ldots,\sum_{i=1}^na_i)$; see Figure~\ref{fig:tes2trans}. 

\begin{proposition} \label{prop:Tes2Trans}
For ${\bf a}=(a_1,\ldots,a_n) \in (\ZZ_{\geq 0})^n$
with $a_1>0$ we have that  
\begin{equation}
\label{eq:Tes2Trans}
\TP_n({\bf a}) \cong \{ (m_{i,j})\in
\TransP_n(a_1,a_1+a_2,\ldots,\sum_{i=1}^n a_i) \,:\, m_{i,j}=0 \text{ if
} i-j \geq 2\}.
\end{equation}
\end{proposition}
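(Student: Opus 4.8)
The plan is to exhibit an explicit affine map that is both a bijection of the two polytopes and unimodular, so that it is at once an isomorphism of polytopes and of their lattice-point sets (thereby also matching $T_n(\mathbf a)$ with the number of integer points of the face). By Lemma~\ref{lem:flow} I may work on the flow side and replace $\TP_n(\mathbf a)$ by $\FP_n(\mathbf a)$, writing a flow as $f(i,j)$ for $1\le i<j\le n+1$; recall from the proof of Lemma~\ref{lem:flow} that $f(i,j)=x_{i,j}$ for $j\le n$ and $f(i,n+1)=x_{i,i}$. Put $s_i:=a_1+\cdots+a_i$. As orientation, both sides have dimension $\binom n2$: the flow polytope has $\binom{n+1}{2}$ edge-coordinates cut out by $n$ independent conservation equations, while the right-hand face has $n^2-\binom{n-1}{2}$ admissible cells cut out by the $2n-1$ independent margin equations, and $n^2-\binom{n-1}{2}-(2n-1)=\binom n2$.

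The map I propose simply relabels edges as cells after a single column shift. Define $M=(m_{i,j})$ by
\[
m_{i,j}=f(i,j+1)\quad(1\le i\le j\le n),\qquad m_{i,j}=0\quad(i\ge j+2),
\]
and fill the subdiagonal by the amount forcing the prescribed margins,
\[
m_{i+1,i}=s_i-\sum_{k=1}^{i}f(k,i+1)\quad(1\le i\le n-1).
\]
(In the original coordinates this reads $m_{i,j}=x_{i,j+1}$ for $i\le j<n$, $m_{i,n}=x_{i,i}$, and $m_{i+1,i}=s_i-\sum_{k\le i}x_{k,i+1}$.) Thus the $\binom{n+1}{2}$ flows are placed without repetition into the weakly-upper-triangular cells of an $n\times n$ array, the flows into vertex $n+1$ occupying the last column, and the $n-1$ subdiagonal cells are then determined.

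I then have to check three things. First, that all row and column sums of $M$ equal $s_i$: the column sums hold by the definition of the subdiagonal, and the row sums follow from flow conservation (equivalently from $s_i-s_{i-1}=a_i$), so $M\in\TransP_n(s_1,\ldots,s_n)$ and meets the support condition $m_{i,j}=0$ for $i-j\ge2$. Second, and this is the crux, that the subdiagonal entries are nonnegative; I expect this to be the main obstacle. A naive map keeping the strictly-upper part of $X$ unchanged fails here — the forced diagonal cell would be a partial trace minus a partial column sum, which can be negative — and this is exactly why the shift is needed. With the shift, the cut between $\{1,\dots,i\}$ and $\{i+1,\dots,n+1\}$ carries total flow equal to the netflow $s_i$ of the initial segment, so
\[
m_{i+1,i}=s_i-\sum_{k\le i}f(k,i+1)=\sum_{k\le i}\sum_{l\ge i+2}f(k,l)\ge 0,
\]
namely the flow from $\{1,\dots,i\}$ that overshoots vertex $i+1$; the remaining entries $m_{i,j}=f(i,j+1)$ are nonnegative because flows are.

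Finally I check that the map is a unimodular bijection. On the cells on and above the diagonal it is literally a permutation of the flow coordinates, hence integral with integral inverse, and the $n-1$ subdiagonal cells are dependent coordinates of the face fixed by the margins; the inverse is the evident unshift $x_{i,j+1}=m_{i,j}$, $x_{i,i}=m_{i,n}$, which carries the face back into $\TP_n(\mathbf a)$ because the recovered $x$'s are entries of $M$ (so nonnegative) and the correct hook sums are automatic from the affine isomorphism of the ambient equation systems. This gives surjectivity, hence the claimed isomorphism, and unimodularity upgrades it to a bijection of lattice points. The hypothesis $a_1>0$ enters only to keep the picture nondegenerate: it guarantees $s_i\ge a_1>0$ for every $i$, so all margins are positive and the first row and column of $M$ do not collapse to zero. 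I anticipate that the only real work is the clean bookkeeping of the margin identities together with the cut argument for nonnegativity of the subdiagonal.
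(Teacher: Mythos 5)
Your proposal is correct and is essentially the paper's own proof: the map you define (shift the upper-triangular flow coordinates by one column, put $f(i,n+1)=x_{i,i}$ in the last column, force the subdiagonal entries from the column margins) is exactly the paper's $\Psi$, with the same cut-based argument that $m_{j+1,j}=\sum_{k\le j}a_k-\sum_{k\le j}f(k,j+1)\ge 0$ and the same evident unshift inverse. The one discrepancy is your closing remark about the hypothesis $a_1>0$: it is not needed anywhere in your bijection, and in the paper it is used for the part of the argument you omit, namely showing via the Klee--Witzgall facet criterion (each $F_{i,j}(\mathbf{s})$ is a facet because $s_i+s_j<\sum_k s_k$, which requires all margins positive) that the right-hand side of \eqref{eq:Tes2Trans} is actually a \emph{face} of the transportation polytope, as asserted in the surrounding text.
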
 

For example, the Chan-Robbins
Yuen polytope $\TP_n(1,0,\ldots,0)$  is isomorphic to a face of the Birkhoff polytope
$\TransP_n(1,1,\ldots,1)$ \cite[Lemma 18]{BV2} and the Tesler polytope
$\TP_n(1,1,\ldots,1)$ is isomorphic to a face of the
transportation polytope $\TransP_n(1,2,\ldots,n)$. To prove the proposition we need the following characterization of the
facets of transportation polytopes \cite[Theorem 2]{KW} by Klee and Witzgall.

\begin{lemma} 
\cite{KW}
\label{lem:facetstrans}
Let ${\bf s}=(s_1,s_2,\ldots,s_n)$. The facets of
$\TransP_n({\bf s})$ are of the form $F_{i,j}({\bf s}):=\{M \in
\TransP_n({\bf s}) \,:\, m_{i,j}=0\}$ provided $s_i+s_j<\sum_{i=1}^n s_i$.
\end{lemma}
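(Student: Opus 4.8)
The plan is to invoke the standard criterion for when a sign constraint is facet-defining in a polytope presented by equalities together with nonnegativity, and to reduce the remaining question to a bipartite transportation-feasibility problem. Throughout I assume the margins are positive, $s_k>0$ for all $k$; this is the case relevant to Proposition~\ref{prop:Tes2Trans}, where ${\bf s}=(a_1,a_1+a_2,\dots,\sum_i a_i)$ with $a_1>0$. For such margins the strictly positive matrix $N=(s_ks_l/\sigma)_{k,l}$, where $\sigma=\sum_{k=1}^n s_k$, lies in $\TransP_n({\bf s})$ and is a relative interior point, so the polytope is full dimensional inside the affine subspace cut out by its $2n$ row- and column-sum equations. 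Since the incidence matrix of $K_{n,n}$ has rank $2n-1$, that subspace, hence $\TransP_n({\bf s})$, has dimension $(n-1)^2$. The only inequalities in the description are the sign constraints $m_{i,j}\ge 0$, so every facet is of the form $F_{i,j}({\bf s})$, and it remains to decide for which pairs the inequality $m_{i,j}\ge 0$ is genuinely facet-defining. With positive margins one can route a small amount of mass through cell $(i,j)$, so $m_{i,j}$ is not identically zero on the polytope; hence $F_{i,j}({\bf s})$ is a facet if and only if $m_{i,j}\ge 0$ is \emph{not} implied by the remaining constraints, i.e.\ there is a real matrix $M'=(m'_{k,l})$ with the prescribed row and column sums, with $m'_{k,l}\ge 0$ for every $(k,l)\neq(i,j)$, and with $m'_{i,j}<0$.

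For the forward direction I would argue by a counting identity. Suppose such a witness $M'$ exists. The entries of $M'$ lying in neither row $i$ nor column $j$ are all nonnegative, and by inclusion--exclusion their total is
\[
\sum_{k\neq i}\sum_{l\neq j} m'_{k,l} \;=\; \sigma - s_i - s_j + m'_{i,j},
\]
since the full matrix sums to $\sigma$, row $i$ and column $j$ sum to $s_i$ and $s_j$, and cell $(i,j)$ has been removed from both. Nonnegativity of the left-hand side forces $s_i+s_j\le \sigma+m'_{i,j}<\sigma$, so a facet at $(i,j)$ can occur only when $s_i+s_j<\sum_k s_k$.

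For the converse I would build the witness explicitly using a feasibility result on the graph $K_{n,n}$ with the edge $(i,j)$ deleted. Assume $s_i+s_j<\sigma$ and fix $\delta$ with $0<\delta\le\sigma-s_i-s_j$. I seek a nonnegative matrix supported off cell $(i,j)$ whose row margins are $(s_1,\dots,s_i+\delta,\dots,s_n)$ and whose column margins are $(s_1,\dots,s_j+\delta,\dots,s_n)$; setting $m'_{i,j}=-\delta$ then produces a matrix with the original margins ${\bf s}$ and a single negative entry, exactly the required witness. Existence of this transportation on $K_{n,n}\setminus(i,j)$ is governed by Gale's feasibility criterion: a feasible flow exists precisely when the supply of every set $S$ of rows is at most the demand of its neighborhood. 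Any $S$ other than $\{i\}$ (or any $S$ containing a second row) has all columns as its neighborhood, so the only binding instance is $S=\{i\}$, whose neighborhood is every column except $j$, with total demand $\sigma-s_j$; against the supply $s_i+\delta$ this yields the single condition $s_i+\delta\le\sigma-s_j$, i.e.\ $\delta\le\sigma-s_i-s_j$, which holds by the choice of $\delta$. Hence the witness exists and $F_{i,j}({\bf s})$ is a facet.

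The main obstacle is the converse, and within it the observation that deleting the single edge $(i,j)$ from $K_{n,n}$ leaves Gale's condition binding only at the endpoints of the deleted edge; once this is isolated the feasibility bound drops out immediately. A secondary point to flag is the positivity hypothesis on the margins: if some $s_k=0$ the corresponding row and column vanish and various $F_{i,j}$ degenerate, and for $n=2$ the polytope is one-dimensional and distinct sign constraints can cut out the same vertex, so the clean statement should be read for positive margins (and $n\ge 3$), which is all that is used in the sequel.
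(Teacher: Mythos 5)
The paper contains no proof of this lemma to compare against: it is imported verbatim from Klee--Witzgall \cite{KW} (their Theorem 2), and the paper's only interaction with it is the application inside the proof of Proposition~\ref{prop:Tes2Trans}. Your argument is a correct, self-contained proof of the statement in the regime in which the paper actually uses it. The skeleton is sound: the strictly positive rank-one matrix $(s_ks_l/\sigma)$ witnesses that $\TransP_n({\bf s})$ is full-dimensional in the affine subspace of its margin equations (dimension $(n-1)^2$, by the rank-$(2n-1)$ incidence matrix of $K_{n,n}$), so every facet comes from some sign constraint $m_{i,j}\geq 0$; the inclusion--exclusion identity $\sum_{k\neq i,\,l\neq j} m'_{k,l}=\sigma-s_i-s_j+m'_{i,j}$ correctly shows a witness with $m'_{i,j}<0$ forces $s_i+s_j<\sigma$; and the converse via Gale's feasibility criterion on $K_{n,n}$ minus the edge $(i,j)$ is clean --- your observation that the only binding cut is $S=\{i\}$, giving exactly $\delta\leq\sigma-s_i-s_j$, is right, and the totals on the two sides match ($\sigma+\delta$ each), as Gale's theorem requires. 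Your caveats also align precisely with the paper's usage: in Proposition~\ref{prop:Tes2Trans} the lemma is invoked only for $n\geq 3$ and with $s_i\geq a_1>0$, and indeed for $n=2$, ${\bf s}=(1,1)$ the criterion as literally stated would exclude the two endpoint facets, so the restriction is not pedantry.

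One step you assert rather than prove deserves a sentence: the direction ``$F_{i,j}$ a facet $\Rightarrow$ $m_{i,j}\geq 0$ is not implied by the remaining constraints.'' This is not automatic for an arbitrary list of sign constraints, since two coordinates could be proportional as affine functionals on the affine hull and then a redundant constraint would still cut a facet --- this is exactly what happens at $n=2$, where $m_{1,1}\equiv m_{2,2}$ and $m_{1,2}\equiv m_{2,1}$ on the affine hull, as you flag. For $n\geq 3$ the needed non-proportionality holds: given $(k,l)\neq(i,j)$, perturb the interior point along a $4$-cycle through $(i,j)$ supported on rows $\{i,i'\}$ and columns $\{j,j'\}$ chosen to avoid $(k,l)$ (possible since $n\geq 3$), which changes $m_{i,j}$ while fixing $m_{k,l}$. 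Adding this one-line remark closes the only gap; otherwise the proof stands.
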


\begin{proof}[Proof of Proposition~\ref{prop:Tes2Trans}]
Fix ${\bf s} = (a_1,a_1+a_2,\ldots,\sum_{i=1}^n a_i)$ and let $\mathsf{F}_n$ denote the set on the right-hand-side of
\eqref{eq:Tes2Trans}. We claim that $\mathsf{F}_n$ is a
face of $\TransP_n({\bf s})$. If $n=1,2$ then
$\mathsf{F}_n=\TransP_n({\bf a})$ so the claim follows.
For $n\geq 3$ we have that $\mathsf{F}_n = \bigcap_{i-j\geq 2} F_{i,j}({\bf s})$. Since each $s_i\geq a_1>0$ then
$s_i+s_j<\sum_{i=1}^n s_i$ and by Lemma~\ref{lem:facetstrans} each $F_{i,j}({\bf s})$ is a facet of
$\TransP_n({\bf s})$. Thus $\mathsf{F}_n$ is a face of this transportation
polytope settling the claim. 

Next,  we build an isomorphism between $\FP_n({\bf a})$ and
$\mathsf{F}_n$. Then the result will follow by Lemma~\ref{lem:flow}. Let
$\Psi:\FP_n({\bf a}) \to \mathsf{F}_n$ be defined by $\Psi:f \mapsto (m_{i,j})$
where $m_{i,j}=\begin{cases}
f(i,j+1) & \text{ if } 1\leq i\leq j\leq n\\
\sum_{t=1}^ja_t - \sum_{t=1}^{j} f(t,j+1)      &  \text{ if } j=i-1,\\
0        & \text{ if } i-j \geq 2.
\end{cases}$.  See Figure~\ref{fig:tes2trans} for an example. 

We check that $(m_{ij})=\Psi(f)$ is in $\TransP_n({\bf s})$. We have that  $m_{j+1,j}=\sum_{k=1}^ja_k -
\sum_{k=1}^{j}f(k,j+1) \geq 0$ since $\sum_{k=1}^{j}f(k,j+1)$ is at most the total flow introduced at vertices $1,2, \ldots, j$, which is  $\sum_{k=1}^ja_k$. Therefore all the entries of $\Psi(f)$
are nonnegative. By construction, for $k=1,\ldots,n-1$ the sum of the
$k\textsuperscript{th}$ column is $\sum_{i=1}^ka_i$. The sum of the
$n\textsuperscript{th}$ column equals the netflow on vertex $n+1$ of
the complete graph, $\sum_{i=1}^n m_{i,n} = \sum_{i=1}^n f(i,n+1) = \sum_{i=1}^n a_i$.
For the rows, the netflow on vertex $k$, for $k=1,\ldots,n$, is $a_k$
which implies that $\sum_{j=k}^n f(k,j+1)- \sum_{i=1}^{k-1}
f(i,k) = a_k$. Thus the $k\textsuperscript{th}$ row sum equals
\begin{align*}
m_{k,k-1} + \sum_{j=k}^n m_{k,j} &= \left(\sum_{i=1}^{k-1} a_i -
\sum_{i=1}^{k-1} f(i,k)\right) + \sum_{j=k}^n f(k,j+1)\\
&= \sum_{i=1}^{k-1} a_i + \left(\sum_{j=k}^n f(k,j+1)- \sum_{i=1}^{k-1}
f(i,k)\right)= \sum_{i=1}^{k-1} a_i+a_k.
\end{align*}
Therefore $\Psi(f)$ is in $\TransP_n({\bf s})$. By construction $m_{i,j}=0$ if $i-j \geq
2$, so $\Psi(f)$ is also in $\mathsf{F}_n$ and so $\Psi$ is well
defined. Finally, we leave to the reader to check that $\Psi$ is a bijection with
inverse $\Psi^{-1}:\mathsf{F}_n \to \FP_n({\bf
  a})$, $(m_{ij})\mapsto f$ where $f(i,j)=m_{i,j-1}$ for $1\leq
i<j\leq n+1$. 
\end{proof}

\begin{figure}
\includegraphics{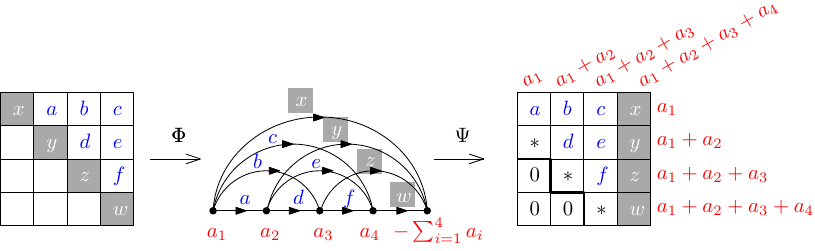}
\caption{Correspondence among: a point $X$ in the Tesler polytope
  $\TP_4({\bf a})$, a flow $f$ on the complete graph $k_5$ with netflow ${\bf
    a'}$, and a point $M=(m_{ij})$ of the transportation polytope
  $\TransP_4(a_1,a_1+a_2,\ldots,\sum_{i=1}^4 a_i)$ with
  $m_{31}=m_{41}=m_{42}=0$ (the entries marked $*$ are determined by
  the others).}
\label{fig:tes2trans}
\end{figure}

\subsection{The study of $\TP_n({\bf a})$} Examples~\ref{ex:tesler} and \ref{ex:cry}  served as our inspiration for studying   the Tesler polytope
$\TP_n({\bf a}) \cong \FP_n(\aaa)$. 
In Section~\ref{h-section} we prove that for any vector $\aaa \in (\ZZ_{\geq 0})^n$ of nonnegative integers, the polytope
$\TP_n(\aaa)$ has dimension ${n \choose 2}$
and at most $n!$ vertices, all of which are integral.  When $\aaa \in (\ZZ_{> 0})^n$ consists entirely of positive entries, we prove that 
$\TP_n(\aaa)$ has exactly $n!$ vertices.  In this case,
these vertices are the {\bf permutation Tesler
  matrices} of order $n$, which are the $n\times n$ Tesler matrices with at most one nonzero
entry in each row.  

Recall that if $P$ is a $d$-dimensional polytope, the {\bf $f$-vector} $f(P) = (f_0, f_1, \dots, f_d)$ of
$P$ is given by letting $f_i$ equal the number of faces of $P$ of dimension $i$.  The 
{\bf $f$-polynomial} of $P$ is the corresponding generating function
$\sum_{i = 0}^d f_i x^i$. A polytope $P$ is {\bf simple} if each of its
vertices is incident to $\dim(P)$ edges.  
If $P$ is a simple polytope, the {\bf $h$-polynomial} of $P$ is the polynomial $\sum_{i = 0}^d h_i x^i$
which is related to the $f$-polynomial of $P$ by the equation
$\sum_{i = 0}^d f_i (x-1)^i = \sum_{i = 0}^d h_i x^i$.  The coefficient sequence $(h_0, h_1, \dots, h_d)$
of the  $h$-polynomial of $P$ is called the {\bf $h$-vector} of $P$.

In Section~\ref{h-section} 
we characterize the vectors $\aaa \in (\ZZ_{\geq 0})^n$ for which  the
Tesler polytope $\TP_n(\aaa)$ is simple (Theorem~\ref{thm:charsimple}).  
In particular, we show that $\TP_n(\aaa)$ is simple whenever $\aaa \in (\ZZ_{> 0})^n$.  In this case,
the sum of its $h$-vector
entries is given by $\sum_{i = 0}^{\binom{n}{2}} h_i = f_0$.  Since
$\TP_n(\aaa)$ for ${\bf a} \in (\mathbb{Z}_{>0})^n$ has $n!$ vertices, this 
implies that $\sum_{i = 0}^{\binom{n}{2}} h_i = n!$.  One might expect that the $h$-polynomial
$\sum_{i = 0}^{\binom{n}{2}} h_i x^i$ of $\TP_n(\aaa)$ is the generating function of some interesting 
statistic on permutations.  Indeed, we show in Section~\ref{h-section} that the $h$-polynomial
of the Tesler polytope is the generating function for Coxeter length.

\begin{theorem} \label{thm:h} (Theorem~\ref{simple}, Corollary~\ref{mahonian})
Let $\aaa \in (\ZZ_{> 0})^n$ be a vector of positive integers.
The polytope $\TP_n(\aaa)$ is a simple polytope and its
$h$-vector 
is given by the
Mahonian numbers, that is, $h_i$ is the number of permutations of $\{1,2,\ldots,n\}$
with $i$ inversions. We have 
\[
\sum_{i=0}^{\binom{n}{2}} f_i (x-1)^i = \sum_{i=0}^{\binom{n}{2}} h_i
x^i = [n]!_x,
\]
where $[n]!_x = \prod_{i=1}^n (1+x+x^2+\cdots + x^{i-1})$ and the $f_i$ are the $f$-vector entries of $\TP_n({\bf 1})$.
\end{theorem}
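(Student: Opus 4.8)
The plan is to work on the flow side via Lemma~\ref{lem:flow}, where $\TP_n(\aaa)\cong\FP_n(\aaa)$ is the flow polytope of $k_{n+1}$, and to exploit the facts recorded above that $\dim\TP_n(\aaa)=\binom n2$ and that for $\aaa\in(\ZZ_{>0})^n$ its vertices are the $n!$ permutation Tesler matrices. Under $\Phi$ each such vertex is a flow supported on a \emph{functional spanning tree} $T_c=\{(k,c(k)):1\le k\le n\}$ of $k_{n+1}$, where $c\colon\{1,\dots,n\}\to\{2,\dots,n+1\}$ satisfies $c(k)>k$; there are exactly $\prod_{k=1}^n(n-k+1)=n!$ such trees. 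The first step is to observe that when every $a_i>0$ each $T_c$ is \emph{non-degenerate}: deleting $(k,c(k))$ separates off the subtree $S_k$ rooted at $k$, whose vertices all lie in $\{1,\dots,n\}$, so the flow on that edge equals $F_k:=\sum_{m\in S_k}a_m>0$. Hence all $n$ tree edges carry strictly positive flow.

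Simplicity then follows from standard linear-programming geometry. Since the incidence matrix $A_{k_{n+1}}$ has rank $n$, a non-degenerate vertex of $\FP_n(\aaa)$ meets exactly $\binom{n+1}2-n=\binom n2=\dim\FP_n(\aaa)$ edges, one for each non-tree edge entering the basis; this proves that $\TP_n(\aaa)$ is simple. I would next identify these edges combinatorially. Pivoting a non-tree edge $(i,m)$ into $T_c$ creates a unique fundamental cycle; the minimum-ratio test forces the edge $(i,c(i))$ to leave, since its flow $F_i$ is the smallest along the decreasing branch of the cycle, so the adjacent vertex is $T_{c'}$ with $c'(i)=m$ and $c'=c$ elsewhere. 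Thus the edges incident to $c$ are exactly the single-coordinate changes $c(i)\rightsquigarrow m$, and the $1$-skeleton of $\TP_n(\aaa)$ is the Cartesian product of complete graphs $K_n\mathbin\square K_{n-1}\mathbin\square\cdots\mathbin\square K_2$.

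To compute the $h$-vector I would use the Morse-theoretic description: for a simple polytope and a generic linear functional $\ell$ one has $\sum_i h_ix^i=\sum_v x^{d(v)}$, where $d(v)$ is the number of edges at $v$ along which $\ell$ decreases. I would take $\ell(f)=\sum_{(i,j)}w_{i,j}f(i,j)$ with $w_{i,j}=M^{-i}\,j$ for $M$ large, then perturb it generically so as to separate the vertex values without changing any edge sign. For the edge $c(i)\rightsquigarrow m$ the direction vector is supported on the fundamental cycle, so $\ell$ changes by $M^{-i}(m-c(i))$ plus terms coming from the two tree paths, all of which have source index strictly larger than $i$ and hence are of size $O(M^{-(i+1)})$. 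For $M$ large the leading term dominates, so this edge is descending precisely when $m<c(i)$. Consequently $d(c)=\sum_{k=1}^n\bigl(c(k)-k-1\bigr)$, and summing over all $c$ gives $\sum_v x^{d(v)}=\prod_{k=1}^n\bigl(1+x+\cdots+x^{\,n-k}\bigr)=[n]!_x$. Reading $c(k)-k-1\in\{0,\dots,n-k\}$ as the Lehmer code of a permutation identifies $d(c)$ with the number of inversions, so $h_i$ is the Mahonian number counting permutations of $\{1,\dots,n\}$ with $i$ inversions; the claimed relation between the $f$- and $h$-vectors is then the defining one for a simple polytope.

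The main obstacle is the sign computation in the last step. Although the $1$-skeleton is a clean product of complete graphs, the direction of an edge $c(i)\rightsquigarrow m$ genuinely depends, through its fundamental cycle, on the \emph{entire} tree $T_c$ and hence on the other coordinates of $c$. The crux is therefore to show that the within-coordinate order is nevertheless independent of the remaining coordinates, which is exactly what the scale separation $w_{i,j}=M^{-i}\,j$ accomplishes by making the source-$i$ term dominate every fundamental-cycle contribution. Verifying this domination uniformly over all vertices---equivalently, that the resulting acyclic orientation of $K_n\mathbin\square\cdots\mathbin\square K_2$ is the product of transitive tournaments---is the one place where the flow and tree structure must be controlled with care.
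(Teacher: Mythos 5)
Your proof is correct, but it reaches the theorem by a genuinely different route than the paper. The paper's engine is combinatorial: it characterizes the \emph{entire face poset} of $\TP_n(\aaa)$ via support tableaux (Theorem~\ref{face-poset-characterization}), deduces simplicity by counting adjacent zero-dimensional tableaux, identifies the face poset with that of $\Delta_1\times\cdots\times\Delta_{n-1}$, and then obtains $[n]!_x$ either from multiplicativity of $h$-polynomials of products of simple polytopes or from a generic linear form. You instead work entirely on the flow side with standard linear-programming/network-flow tools: vertices are flows on functional spanning trees, positivity of $\aaa$ makes every such basic solution non-degenerate (each tree edge carries a subtree sum, which is strictly positive), non-degeneracy immediately gives that each vertex lies on exactly $\binom{n+1}{2}-n=\binom{n}{2}$ edges, and the minimum-ratio test (using that subtree sums strictly \emph{increase} along the decreasing branch of each fundamental cycle, so the leaving edge is uniquely $(i,c(i))$) pins down the $1$-skeleton as a Cartesian product of complete graphs. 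Your $h$-vector computation then parallels the paper's second proof of Corollary~\ref{mahonian}, but is more complete on precisely the point the paper glosses over: the paper simply posits a linear form $\lambda$ for which shifting support rightward within a row increases $\lambda$, whereas you construct one explicitly ($w_{i,j}=M^{-i}j$) and verify the required domination, which is the genuinely nontrivial step since edge directions are fundamental-cycle vectors involving coordinates outside row $i$; your observation that every other edge of the cycle has source index strictly greater than $i$ makes the scale separation work uniformly over all vertices. What the paper's route buys is the full face lattice, which it needs elsewhere (the simplicity characterization at general signatures in Theorem~\ref{simple}, and Corollaries~\ref{increasing-simplices} and \ref{other-simple-case}); what your route buys is a shorter, self-contained argument for this particular theorem, close in spirit to the quiver/flow results of Hille and Brion--Vergne that the paper cites only in remarks. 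One small observation: you import from the paper the fact that the vertices are the $n!$ permutation Tesler matrices (proved there via the tableau machinery), but your LP setup actually delivers this for free --- any vertex is supported on a forest, and positivity of every $a_k$ forces at least one out-edge per vertex $k\le n$, hence exactly one, i.e.\ a functional tree --- so your argument could be made independent of that citation.
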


Just as $\TP_n(1,0,\ldots,0)$, i.e. the
Chan-Robbins-Yuen polytope, $\FP_n(1,0,\ldots,0)$, has a product formula for its  normalized volume involving
Catalan numbers, so does the Tesler polytope $\TP_n({\bf
  1}) := \TP_n(1, 1, \dots, 1)$.  The following result is proven in
Section~\ref{volume-section} using a new iterated constant term
identity (Lemma~\ref{main:lemma}).

\begin{theorem} \label{thm:vol} (Corollary~\ref{cor:case11})
The normalized volume of the Tesler polytope
$\TP_n({\bf 1})$, or equivalently of the flow polytope
$\FP_n(1,1,\ldots,1)$ equals
\begin{align}
\vol\TP_n({\bf 1}) = \vol \FP_n(1,1,\ldots,1)
&=  \frac{\binom{n}{2}!\cdot
  2^{\binom{n}{2}}}{\prod_{i=1}^n i!} \notag \\
&= f^{(n-1,n-2,\ldots,1)} \cdot
\prod_{i=0}^{n-1} Cat(i), \label{eq:volformula}
\end{align}
where $Cat(i) = \frac{1}{i+1}\binom{2i}{i}$ is the $i\textsuperscript{th}$ Catalan number
and $f^{(n-1,n-2,\ldots,1)}$ is the number of Standard Young Tableaux of
staircase shape $(n-1,n-2,\ldots,1)$. 
\end{theorem}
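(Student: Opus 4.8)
The plan is to transfer everything to the flow-polytope side via Lemma~\ref{lem:flow} and compute $\vol\FP_n(1,1,\ldots,1)$ through its Ehrhart theory, turning the volume into a constant-term evaluation that the new identity of Lemma~\ref{main:lemma} can dispatch; a short factorial/hook-length computation then reconciles the two closed forms in \eqref{eq:volformula}. First I would recall that $\FP_n(\mathbf a)$ has integral vertices and dimension $\binom n2$, and that dilating the polytope by $t$ scales the netflow to $t\mathbf a$; hence by Lemma~\ref{lem:tes} its Ehrhart polynomial is $L(t)=K_{A_n}(t\mathbf a,-t\sum_i a_i)$, a polynomial of degree $\binom n2$ in $t$ whose leading coefficient is $\vol\FP_n(\mathbf a)/\binom n2!$. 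Thus the normalized volume is the top-degree part of a Kostant partition function, in the same spirit as the Chan--Robbins--Yuen computation recalled in Example~\ref{ex:cry}.

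Next I would express this leading coefficient as an iterated constant term. Using the generating function $\prod_{1\le i<j\le n+1}(1-x_i/x_j)^{-1}$ for the type $A_n$ Kostant partition function, one writes $K_{A_n}(\mathbf b)=\CTn{n}\bigl[\,x^{-\mathbf b}\prod_{i<j}(1-x_i/x_j)^{-1}\bigr]$ (after the homogeneity reduction that sets the last variable to $1$), and passing to the top-degree/continuous limit in the dilation replaces the lattice count by a constant term of Morris type in which the factors $(1-x_i/x_j)^{-1}$ contribute only their leading behaviour. Specializing to $\mathbf a=\mathbf 1$ then produces a single, explicit iterated constant term. Evaluating it is the crux of the argument, and this is exactly what Lemma~\ref{main:lemma} is built to do: it is a Morris/Dyson-style iterated constant term identity, the analogue of the Morris identity that Zeilberger \cite{Z} used to evaluate the CRY volume in \eqref{eq:z}. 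I expect the main obstacle to be establishing Lemma~\ref{main:lemma} itself---controlling the order of the successive single-variable constant terms $\CT_{x_1},\ldots,\CT_{x_n}$ and tracking the shifted exponents that appear at each stage---rather than its application here.

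Applying Lemma~\ref{main:lemma} and simplifying the resulting product of factorials should yield $\vol\FP_n(\mathbf 1)=\binom n2!\cdot 2^{\binom n2}/\prod_{i=1}^n i!$, the first closed form in \eqref{eq:volformula}. Finally I would verify the second equality by an elementary computation. By the hook length formula, $f^{(n-1,n-2,\ldots,1)}=\binom n2!/\prod_{c}h(c)$, and every hook length $h(c)$ of the staircase shape is odd, the full product of hooks being $\prod_{i=1}^{n-1}(2i-1)^{\,n-i}$; combining this with $\prod_{i=0}^{n-1}Cat(i)=\prod_{i=0}^{n-1}\frac{(2i)!}{(i+1)!\,i!}$ and cancelling double factorials against powers of $2$ reduces the identity $f^{(n-1,\ldots,1)}\prod_{i=0}^{n-1}Cat(i)=\binom n2!\,2^{\binom n2}/\prod_{i=1}^n i!$ to a routine rearrangement of factorials. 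This last step carries no real difficulty and completes the proof.
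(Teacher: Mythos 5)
Your overall skeleton matches the paper's proof: pass to $\FP_n(\mathbf{1})$ via Lemma~\ref{lem:flow}, express the normalized volume as an iterated constant term, evaluate that constant term by Lemma~\ref{main:lemma} at $a=c=1$ (the paper's Corollary~\ref{cor:case11}, using the duplication formula for $\Gamma$), and reconcile the two closed forms by the hook-length computation (the paper's Proposition~\ref{prop:sytcase11}; your factorization $\prod_{i=1}^{n-1}(2i-1)^{n-i}$ of the staircase hook product is correct). Those outer steps are sound.

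The genuine gap is the bridge in the middle: your claim that ``passing to the top-degree/continuous limit in the dilation replaces the lattice count by a constant term of Morris type in which the factors $(1-x_i/x_j)^{-1}$ contribute only their leading behaviour'' is not a proof, and it is exactly where the mathematical content of this step lives. Concretely: (i) the operation is undefined --- the leading coefficient in $t$ of the polynomial $t\mapsto K_{A_n}(t,\dots,t,-tn)$ is not computed by formally replacing factors inside the constant term by ``leading behaviour,'' and nothing in your sketch explains where the crucial factor $(x_1+\cdots+x_n)^{\binom{n}{2}}$ in $L_n(1,1)$ would come from, nor why one ends up with $\prod_{i<j}(x_i-x_j)^{-1}$ with all exponent shifts cancelled; (ii) any rigorous version of this limit is essentially the Lidskii volume formula of Baldoni--Vergne, which is what the paper actually invokes. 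The paper's route is: \eqref{eq:vol} expresses $\vol\FP_n(\mathbf{a})$ exactly as $\sum_{\mathbf{i}}\binom{\binom{n}{2}}{i_1,\dots,i_n}a_1^{i_1}\cdots a_n^{i_n}K_{A_{n-1}}(i_1-n+1,\dots,i_n)$; then Proposition~\ref{prop:revflow} (flow reversal), the generating function \eqref{gs:kostant}, and the multinomial theorem --- which is precisely what produces $(x_1+\cdots+x_n)^{\binom{n}{2}}$ --- yield Lemma~\ref{lemma:vol_to_CT}, i.e.\ $\vol\TP_n(\mathbf{1})=L_n(1,1)$. Note also that the Chan--Robbins--Yuen computation you appeal to ``in the same spirit'' itself rests on \eqref{eq:vol}, so you are implicitly using the tool you never state. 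To repair your argument, either quote \eqref{eq:vol} directly (making the Ehrhart step unnecessary), or, staying closer to your plan, apply the lattice-point Lidskii formula \eqref{eq:kost} to the dilated vector $t\mathbf{1}$ and extract leading coefficients term by term using $\binom{t+n-k}{i_k}\sim t^{i_k}/i_k!$, which recovers \eqref{eq:vol}; some form of this decomposition is unavoidable, as Ehrhart theory plus a formal limit inside the constant term does not suffice.
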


\section{The face structure of $\TP_n(\aaa)$}
\label{h-section}

Let $\aaa \in (\ZZ_{\geq 0})^n$.
The aim of this section is to describe the face poset of $\TP_n(\aaa)$.  It will turn out that the combinatorial isomorphism
type of $\TP_n(\aaa)$ only depends on the positions of the zeros in the integer vector $\aaa$.  

Let $\mathrm{rstc}_n$ denote the reverse staircase of size $n$; the Ferrers diagram of $\mathrm{rstc}_4$ is shown below.

\begin{center}
\begin{Young}
   & & & \cr
 , & & & \cr
 ,  &, & & \cr
 ,   & ,& ,& 
\end{Young}
\end{center}

We use the  ``matrix coordinates" $\{(i, j) \,:\, 1 \leq i \leq j \leq n \}$ to describe the cells of $\mathrm{rstc}_n$.
An {\bf $\aaa$-Tesler tableau} $T$ is a $0,1$-filling of $\mathrm{rstc}_n$  which satisfies the following three conditions:
\begin{enumerate}
\item for $1 \leq i \leq n$, if $a_i > 0$, there is at least one $1$ in row $i$ of $T$,
\item for $1 \leq i < j \leq n$, if $T(i, j) = 1$, then there is at least one $1$ in row $j$ of $T$, and
\item for $1 \leq j \leq n$, if $a_j = 0$ and $T(i, j) = 0$ for all $1 \leq i < j$, then $T(j, k) = 0$ for all $j \leq k \leq n$.
\end{enumerate}
For example, if $n = 4$ and $\aaa = (7,0,3,0)$, then three $\aaa$-Tesler tableaux are shown below.  We write the entries of 
$\aaa$ in a column to the left of a given $\aaa$-Tesler tableau.

\begin{center}
\begin{Young}
,7 & 0  & 1 &1 & 1\cr
,0 & , &  0 & 0  & 1 \cr
,3 &  ,  &, & 1 &  1\cr
, 0 &,   & ,& ,& 1 
\end{Young}
\quad
\begin{Young}
,7&  1  &  0 &  1 & 0 \cr
,0& , & 0 & 0 & 0 \cr
 ,3& ,  &, &  0 & 1\cr
 ,0 &,   & ,& ,& 1 
\end{Young}
\quad
\begin{Young}
,7 & 1  & 1  & 1 &  0\cr
,0 & , & 1 &  1 & 0\cr
,3 & ,  &, & 1 & 0\cr
,0 & ,   & ,& ,& 0
\end{Young}
\end{center}

The {\bf dimension} $\dim(T)$ of an $\aaa$-Tesler tableau $T$ is $\sum_{i = 1}^n(r_i - 1)$, where 
\begin{equation*}
r_i = \begin{cases}
\text{the number of $1$'s in row $i$ of $T$} & \text{if row $i$ of $T$ is nonzero,} \\
1 & \text{if row $i$ of $T$ is zero.}
\end{cases}
\end{equation*}
From left to right, the dimensions of the tableaux shown above are 
$3, 1$, and $3$.  

Given two $\aaa$-Tesler tableaux $T_1$ and $T_2$, we write $T_1 \leq T_2$ to mean that for all $1 \leq i \leq j \leq n$ we have
$T_1(i,j) \leq T_2(i,j)$.  Moreover, we define a $0,1$-filling $\max(T_1, T_2)$ of $\mathrm{rstc}_n$ by
$\max(T_1, T_2)(i,j) = \max(T_1(i,j), T_2(i,j))$.  

We start with two lemmas on $\aaa$-Tesler tableaux.  Our first lemma states that any two zero-dimensional $\aaa$-Tesler tableaux
are componentwise incomparable.

\begin{lemma}
\label{zero-dimensional-distinct}
Let $\aaa \in (\ZZ_{\geq 0})^n$ and let $T_1$ and $T_2$ be two $\aaa$-Tesler tableaux with 
$\dim(T_1) = \dim(T_2) = 0$.  If $T_1 \leq T_2$, then $T_1 = T_2$.
\end{lemma}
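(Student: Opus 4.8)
The plan is to first unpack what the hypothesis $\dim(T) = 0$ forces on the combinatorial structure of the tableaux, and then to run a minimality argument driven by the interaction of conditions (2) and (3) in the definition of an $\aaa$-Tesler tableau.

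First I would observe that $\dim(T) = \sum_{i=1}^n (r_i - 1) = 0$ together with $r_i \geq 1$ forces $r_i = 1$ for every $i$; hence a zero-dimensional $\aaa$-Tesler tableau has \emph{at most one} $1$ in each row. Next, from the hypothesis $T_1 \leq T_2$ I would show that $T_1$ and $T_2$ agree on every row in which $T_1$ is nonzero: if row $i$ of $T_1$ has its unique $1$ at $(i,j)$, then $T_2(i,j) = 1$ as well, and since row $i$ of $T_2$ has at most one $1$ this is its only entry, so the two rows coincide. Consequently $T_1 = T_2$ \emph{unless} the set
\[
S = \{\, i : \text{row } i \text{ of } T_1 \text{ is zero while row } i \text{ of } T_2 \text{ is nonzero} \,\}
\]
is nonempty, and the whole problem reduces to proving $S = \varnothing$.

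Then I would argue by contradiction, taking $i_0 = \min S$. Condition (1) applied to $T_1$ forces $a_{i_0} = 0$, for otherwise row $i_0$ of $T_1$ would have to be nonzero. Because row $i_0$ of $T_2$ is nonzero while $a_{i_0} = 0$, the contrapositive of condition (3) for $T_2$ supplies some $k < i_0$ with $T_2(k, i_0) = 1$; as $T_2$ has at most one $1$ per row, $(k,i_0)$ is the unique entry of row $k$ of $T_2$. The key point is to locate $k$ inside $S$: if row $k$ of $T_1$ were nonzero, then by the agreement established above it would coincide with row $k$ of $T_2$, giving $T_1(k, i_0) = 1$, whereupon condition (2) for $T_1$ would force row $i_0$ of $T_1$ to be nonzero, contradicting $i_0 \in S$. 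Hence row $k$ of $T_1$ is zero, so $k \in S$ with $k < i_0$, contradicting the minimality of $i_0$. This yields $S = \varnothing$ and therefore $T_1 = T_2$.

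The main obstacle is the last step, the interplay between conditions (2) and (3): condition (3) is what manufactures, inside $T_2$, a $1$ lying strictly above the diagonal in column $i_0$, while condition (2) applied to $T_1$ is exactly what prevents that $1$ from occupying a row in which $T_1$ is already nonzero. Getting these two conditions to cooperate—so that each ``new'' $1$ of $T_2$ propagates to a strictly earlier row and feeds a descent—is the heart of the argument; the reduction to $S = \varnothing$ and the minimality setup are then routine bookkeeping.
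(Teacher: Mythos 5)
Your proof is correct, and it is essentially the paper's argument: the paper proves by induction on the row index that whenever row $i$ of $T_2$ contains a $1$ so does row $i$ of $T_1$, using exactly your chain of deductions (condition (1) forcing $a_i = 0$, the contrapositive of condition (3) producing a $1$ at $(k,i)$ in $T_2$ with $k < i$, uniqueness of the $1$ in row $k$ plus $T_1 \leq T_2$ pinning down its position, and condition (2) propagating nonzeroness to row $i$ of $T_1$). Your minimal-counterexample formulation via the set $S$ is just the contrapositive packaging of that same induction.
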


\begin{proof}
Since $\dim(T_1) = \dim(T_2) = 0$, for all $1 \leq i \leq n$ we have that row $i$ of either $T_1$ or $T_2$
consists entirely of $0$'s, with the possible exception of a single $1$.  Since $T_1 \leq T_2$,
it is enough to show that if row $i$ of $T_2$ contains a $1$, then row $i$ of $T_1$ also contains a $1$.  To prove this,
we induct on $i$.  If $i = 1$, then row $1$ of $T_2$ contains a $1$ if and only if $a_1 > 0$, in which case
row $1$ of $T_1$ contains a $1$.  If $i > 1$, suppose that row $i$ of $T_2$ contains a $1$.  Then either 
$a_i > 0$ (in which case row $i$ of $T_1$ also contains a $1$) or $a_i = 0$ and there exists $i' < i$ such that
$T_2(i', i) = 1$.  But in the latter case we have that row $i'$ of
$T_1$ contains a $1$ by induction. This combined with the condition
$T_1 \leq T_2$ and the fact that $T_1$ and $T_2$ contain a unique $1$ in row $i'$ forces $T_1(i',i) = 1$.  Therefore,
row $i$ of $T_1$ contains a $1$.  We conclude that $T_1 = T_2$.
\end{proof}

Our next lemma states that the operation of componentwise maximum preserves the property of being an $\aaa$-Tesler tableau.

\begin{lemma}
\label{max-is-tesler}
Let $\aaa \in (\ZZ_{\geq 0})^n$ and let $T_1$ and $T_2$ be two $\aaa$-Tesler tableaux.
Then $T := \max(T_1, T_2)$ is also an $\aaa$-Tesler tableau.
\end{lemma}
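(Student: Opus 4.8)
The plan is to verify directly that $T := \max(T_1, T_2)$ satisfies each of the three defining conditions of an $\aaa$-Tesler tableau. Throughout I would lean on the elementary observation that $T \geq T_1$ and $T \geq T_2$ componentwise, together with its dual: for $0,1$-valued entries, $T(i,j) = \max(T_1(i,j), T_2(i,j)) = 0$ forces $T_1(i,j) = T_2(i,j) = 0$. The first two conditions concern the \emph{presence} of a $1$ and so will follow from the upward monotonicity $T \geq T_1, T_2$; the third condition concerns the \emph{absence} of $1$'s, and is where the (minor) subtlety lies, so I single it out.

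For condition (1), I would suppose $a_i > 0$. Since $T_1$ is an $\aaa$-Tesler tableau, row $i$ of $T_1$ contains a $1$; as $T \geq T_1$, that same cell is a $1$ in $T$, so row $i$ of $T$ is nonzero. For condition (2), I would suppose $T(i,j) = 1$ with $i < j$, so that $T_1(i,j) = 1$ or $T_2(i,j) = 1$; say $T_1(i,j) = 1$. Condition (2) for $T_1$ produces a $1$ in row $j$ of $T_1$, and $T \geq T_1$ again propagates this $1$ to row $j$ of $T$.

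Condition (3) is the main point. Assume $a_j = 0$ and $T(i,j) = 0$ for all $1 \leq i < j$. The key step is to pull this hypothesis back to each tableau separately: from $T(i,j) = \max(T_1(i,j), T_2(i,j)) = 0$ I obtain $T_1(i,j) = 0$ and $T_2(i,j) = 0$ for every $i < j$. Thus both $T_1$ and $T_2$ satisfy the hypothesis of their own condition (3) in column $j$, so $T_1(j,k) = 0$ and $T_2(j,k) = 0$ for all $j \leq k \leq n$. Taking the componentwise maximum yields $T(j,k) = 0$ for all $j \leq k \leq n$, as required. Having checked all three conditions, $T$ is an $\aaa$-Tesler tableau.

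I expect no serious obstacle: the whole argument rests on the fact that the maximum of two $0,1$-fillings preserves $1$'s upward and vanishes only where both inputs vanish. The one place demanding a little care is condition (3), since its hypothesis and conclusion both concern zeros, so monotonicity alone is insufficient; instead one reduces to $T_1$ and $T_2$ individually using that $\max = 0$ implies both arguments are $0$.
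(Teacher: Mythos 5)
Your proof is correct and follows essentially the same route as the paper's: conditions (1) and (2) are checked via the monotonicity $T \geq T_1, T_2$, and condition (3) via the observation that $\max(T_1(i,j), T_2(i,j)) = 0$ forces both entries to vanish, so the hypothesis of condition (3) passes to $T_1$ and $T_2$ individually and the conclusion lifts back to $T$. The only cosmetic difference is that you flag the asymmetry between the ``presence-of-$1$'' conditions and the ``absence-of-$1$'' condition explicitly, which the paper does implicitly.
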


\begin{proof}
If $a_i > 0$ for some $1 \leq i \leq n$, then row $i$ of $T$ is nonzero because row $i$ of $T_1$ is nonzero.  
If $1 \leq i < j \leq n$ and $T(i, j) = 1$, then either $T_1(i, j) = 1$ or $T_2(i, j) = 1$.  In turn, row $j$ of either $T_1$ or $T_2$ is nonzero,
forcing row $j$ of $T$ to be nonzero.  Finally, if $1 \leq j \leq n$, $a_j = 0$, and $T(i, j) = 0$ for all $1 \leq i < j$, then
$T_1(i, j) = T_2(i, j) = 0$ for all $1 \leq i < j$.  This means that row $j$ of $T_1$ and $T_2$ is zero, so row $j$ of $T$ is also zero.
\end{proof}

The analogue of Lemma~\ref{max-is-tesler} for $\min(T_1, T_2)$ is false; the componentwise minimum of two $\aaa$-Tesler 
tableaux is not in general an $\aaa$-Tesler tableau.
Faces of the Tesler polytope $\TP_n(\aaa)$ and $\aaa$-Tesler tableaux are related by taking supports.

\begin{lemma}
\label{faces-give-tableaux}
Let $\aaa \in (\ZZ_{\geq 0})^n$ and let $F$ be a face of the Tesler polytope $\TP_n(\aaa)$.  Define a function
$T: \mathrm{rstc}_n \rightarrow \{0, 1\}$ by $T(i, j) = 0$ if the coordinate equality $x_{i,j} = 0$ is satisfied on the face $F$ and
$T(i, j) = 1$ otherwise.  Then $T$ is an $\aaa$-Tesler tableau.
\end{lemma}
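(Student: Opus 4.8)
The plan is to verify that the function $T$ defined by taking supports satisfies each of the three defining conditions of an $\aaa$-Tesler tableau. First I would fix a point $X = (x_{i,j})$ in the relative interior of the face $F$. The key observation is that for a point in the relative interior of $F$, we have $x_{i,j} > 0$ precisely when the coordinate $x_{i,j}$ is not identically zero on $F$; equivalently, $T(i,j) = 1$ if and only if $x_{i,j} > 0$ at this interior point $X$. This translates the combinatorial conditions on $T$ into statements about which entries of $X$ are strictly positive, which I can then check directly against the defining hook-sum equations of the Tesler polytope.

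With this reformulation in hand, I would verify the three conditions in turn. For condition (1), if $a_i > 0$, then the $i$th hook sum equation
\[
x_{i,i} + \sum_{j=i+1}^n x_{i,j} - \sum_{k=1}^{i-1} x_{k,i} = a_i > 0
\]
forces at least one term in the row sum $x_{i,i} + \sum_{j>i} x_{i,j}$ to be positive (since the subtracted column terms are nonnegative), so row $i$ of $T$ contains a $1$. For condition (2), suppose $T(i,j) = 1$, i.e. $x_{i,j} > 0$ for $i < j$. I would examine the $j$th hook sum equation and argue that the positive entry $x_{i,j}$ appearing in the subtracted column sum, together with the nonnegativity constraints, forces the row sum of row $j$ to be strictly positive: indeed the $j$th hook sum equals $a_j \geq 0$, so the row sum of row $j$ equals $a_j + \sum_{k<j} x_{k,j} \geq x_{i,j} > 0$, giving a $1$ in row $j$. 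For condition (3), if $a_j = 0$ and $x_{i,j} = 0$ for all $i < j$, then the $j$th hook sum equation reads $x_{j,j} + \sum_{k>j} x_{j,k} = 0$, and since all entries are nonnegative, every entry in row $j$ (from the diagonal rightward) must vanish, so $T(j,k) = 0$ for all $k \geq j$.

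The main subtlety, which I would want to state carefully, is the equivalence between the support of a relative-interior point and the set of coordinates not identically zero on $F$. This is a standard fact about faces of polytopes cut out by nonnegativity constraints: a face $F$ is obtained by setting some subset of the defining inequalities $x_{i,j} \geq 0$ to equality, and a point in the relative interior of $F$ has a strictly positive value in exactly those coordinates whose corresponding inequality is \emph{not} tight on $F$. I expect this to be the only genuinely non-routine point; once it is granted, conditions (1)--(3) follow immediately from manipulating the hook-sum equations as above, using only nonnegativity of the entries. The remaining verifications are direct and require no induction, in contrast to the preceding lemmas.
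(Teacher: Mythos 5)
Your proposal is correct and takes essentially the same approach as the paper: both verify conditions (1)--(3) directly from the hook-sum equations together with nonnegativity of the entries, via the identity (row sum of row $j$) $= a_j + \sum_{k<j} x_{k,j}$. The only difference is cosmetic --- the paper avoids invoking the relative-interior fact by arguing with whatever points of $F$ are convenient (any point for (1), a point where $x_{i,j} > 0$ for (2), all of $F$ for (3)), but the underlying inequalities are identical to yours.
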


\begin{proof}
If $a_i > 0$ for some $1 \leq i \leq n$, we have $x_{i,i} + x_{i, i+1} + \cdots + x_{i,n} \geq a_i$ on the face $F$, so that 
row $i$ of $T$ is nonzero.  Suppose $T(i, j) = 1$ for some $1 \leq i < j \leq n$.  Then $x_{i,j} > 0$ holds for some point in $F$,
so that $x_{j,j} + x_{j,j+1} + \cdots + x_{j,n} \geq x_{i,j} > 0$ at that point.  In particular, row $j$ of $T$ is nonzero.
Finally, suppose that $a_j = 0$ and for all $1 \leq i < j$ we have $T(i, j) = 0$.  Then on the face $F$ we have
$x_{j,j} + x_{j,j+1} + \cdots + x_{j,n} = 0$, forcing $x_{j,j} = x_{j,j+1} = \cdots = x_{j,n} = 0$ on $F$.  This means that 
row $j$ of $T$ is zero.  
\end{proof}

Lemma~\ref{faces-give-tableaux} shows that every face $F$ of $\TP_n(\aaa)$ gives rise to an $\aaa$-Tesler tableaux $T$.
We denote by $\phi: F \mapsto T$ the corresponding map from faces of $\TP_n(\aaa)$ to $\aaa$-Tesler tableaux; we will see that
$\phi$ is a bijection.
We begin by showing that $\phi$ bijects vertices of $\TP_n(\aaa)$ with zero-dimensional $\aaa$-Tesler tableaux.

\begin{lemma}
\label{vertex-characterization}
Let $\aaa \in (\ZZ_{\geq 0})^n$.  The map $\phi$ bijects the vertices of $\TP_n(\aaa)$ with zero-dimensional
$\aaa$-Tesler tableaux.
\end{lemma}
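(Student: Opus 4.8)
The plan is to pass through the flow-polytope description of Lemma~\ref{lem:flow} and exploit the standard fact that a point of $\FP_n(\aaa) = \{f\ge 0 : A_{k_{n+1}} f = (\aaa,-\sum_i a_i)^T\}$ is a vertex if and only if the columns $\e_i-\e_j$ of $A_{k_{n+1}}$ indexed by the edges on which $f$ is positive are linearly independent, i.e.\ if and only if the support of $f$ is a forest in $k_{n+1}$. Under the identification of Lemma~\ref{lem:flow}, the entry $x_{i,j}$ with $i<j$ is the flow on edge $(i,j)$ and the diagonal entry $x_{i,i}$ is the flow on edge $(i,n+1)$, so row $i$ of a tableau records the out-edges of vertex $i$, and for a vertex $v$ the tableau $\phi(v)$ is exactly the support indicator of the point $v$. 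Thus proving $\dim(\phi(v))=0$ amounts to showing that each of the vertices $1,\ldots,n$ has out-degree at most one in the support forest $S$ of $v$.

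First I would show that a vertex maps to a zero-dimensional tableau. The key input is a positivity constraint: if some $j\le n$ were a sink of $S$ (out-degree $0$) with an incoming edge, then the netflow equation at $j$ would read $-(\text{positive incoming flow})=a_j\ge 0$, which is impossible; hence every out-degree-zero vertex among $1,\ldots,n$ is isolated in $S$. Consequently the largest-indexed vertex of any component of $S$ that carries an edge must be $n+1$, so $S$ has a unique nontrivial component, namely a tree on $\{n+1\}\cup W$ for some $W\subseteq\{1,\ldots,n\}$, together with isolated vertices. That tree has $|W|$ edges, each with tail in $W$, while each vertex of $W$ has out-degree at least $1$; since there are $|W|$ such vertices, a counting argument forces every vertex of $W$ to have out-degree exactly $1$. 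Therefore every row of $\phi(v)$ contains at most one $1$, so $\dim(\phi(v))=0$.

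Next I would check injectivity on vertices. If two vertices share the same tableau, then they have the same support $S$, which is a forest; but on a forest the netflow equations determine the flow uniquely, so the two vertices coincide. Hence $\phi$ is injective on the set of vertices.

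Finally, for surjectivity I would begin with a zero-dimensional $\aaa$-Tesler tableau $T$, let $S$ be the corresponding edge set, and reconstruct the flow. Because each of $1,\ldots,n$ has out-degree at most $1$ in $S$, the minimum vertex of any putative cycle would have out-degree at least $2$, so $S$ is a forest; processing the vertices $1,2,\ldots,n$ in increasing order, the relation $f(i,\sigma(i))=a_i+\sum_{k:\sigma(k)=i}f(k,i)$ produces a unique, automatically nonnegative flow. The crux is to verify that this flow is strictly positive on all of $S$, so that the associated vertex $v$ satisfies $\phi(v)=T$ exactly rather than some smaller tableau: condition $(1)$ handles every row with $a_i>0$, while condition $(3)$, read in contrapositive form, guarantees that any vertex with $a_i=0$ possessing an out-edge also possesses an in-edge, which by induction carries positive flow. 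I expect this last step to be the main obstacle, since it is precisely where the combinatorial conditions defining an $\aaa$-Tesler tableau must be matched to the inductive positivity of the reconstructed flow; the forest structure then makes $v$ a vertex, completing the bijection.
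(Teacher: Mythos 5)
Your proof is correct, but it takes a genuinely different route from the paper's. You work in the flow-polytope model of Lemma~\ref{lem:flow} and invoke the standard linear-programming dictionary: a point of $\{f \geq 0 : A_{k_{n+1}} f = ({\bf a}, -\sum_i a_i)^T\}$ is a vertex if and only if the columns $\e_i - \e_j$ indexed by its support are linearly independent, i.e.\ the support is a forest in $k_{n+1}$. Zero-dimensionality of $\phi(v)$ then follows from your degree count in the unique nontrivial tree component (which correctly uses that edges are directed from lower to higher index, so the top vertex of any nontrivial component must be $n+1$), injectivity from uniqueness of a flow with prescribed netflow on a forest, and surjectivity from the greedy reconstruction $f(i,\sigma(i)) = a_i + \sum_{k : \sigma(k) = i} f(k,i)$ processed in increasing vertex order. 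The paper never uses the forest characterization: it constructs, for each zero-dimensional tableau $T$, the unique point $B_T$ with that support, and proves by induction on $n$ that $\TP_n(\aaa) = \mathrm{conv}\{B_T : \dim(T) = 0\}$, writing an arbitrary point $B$ as a point on the segment between two matrices $B'$, $B''$ obtained by shifting mass between the diagonal and the last column and handling those by a direct-sum reduction to $\TP_{n-1}$; extremality of each $B_T$ and distinctness then follow from the componentwise incomparability of zero-dimensional tableaux (Lemma~\ref{zero-dimensional-distinct}). Your route is shorter if one grants the basic-feasible-solution facts, and it isolates cleanly where each defining condition of an $\aaa$-Tesler tableau enters; the paper's route is self-contained in matrix coordinates and yields the convex-hull description of $\TP_n(\aaa)$ as a useful byproduct. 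Two small things you should make explicit: that $\phi(v)$ is an $\aaa$-Tesler tableau at all is exactly Lemma~\ref{faces-give-tableaux}, and in the surjectivity step your reconstructed $f$ must also be checked to satisfy the netflow equation at vertices whose row of $T$ is zero --- this is where conditions (1) and (2) are needed (a zero row forces $a_i = 0$ and forbids in-edges, so the equation $0 - 0 = a_i$ holds), complementing your use of conditions (1) and (3) for strict positivity on the support.
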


\begin{proof}
Let $T$ be an $\aaa$-Tesler tableau with $\dim(T) = 0$.  Then $T$ contains at most a single $1$ in every row.  
There exists a unique point $B_T \in \TP_n(\aaa)$ such that the support of the matrix $B_T$ equals the set of nonzero entries
of $T$.  (Indeed, the vector $\aaa$ can be used to construct the matrix $B_T$ row by row, from top to bottom.)  By
Lemma~\ref{zero-dimensional-distinct}, we have that 
$B_{T_1} \neq B_{T_2}$ for distinct zero-dimensional $\aaa$-Tesler tableaux $T_1$ and $T_2$.
We argue that the set 
\begin{equation*}
\{ B_T \,:\, \text{$T$ an $\aaa$-Tesler tableau with $\dim(T) = 0$} \}
\end{equation*}
is precisely the set of vertices of $\TP_n(\aaa)$. Since this implies that $\phi(B_T) = T$, the lemma will follow.

To begin, we argue that $\TP_n(\aaa) =  \mathrm{conv}\{B_T \,:\, \dim(T) = 0\}$.  To facilitate this 
inductive argument, given any matrix
$B = (b_{i,j}) \in \TP_n(\aaa)$, define the {\em dimension} $\dim(B)$ to be $\dim(T)$, where $T$ is the $\aaa$-Tesler tableau 
whose entries are 
\begin{equation*}
T(i,j) = \begin{cases}
0 & b_{i,j} = 0 \\
1 & b_{i,j} \neq 0.
\end{cases}
\end{equation*}

Fix a matrix $B \in \TP_n(\aaa)$.  We want to show that $B \in \mathrm{conv} \{B_T \,:\, \dim(T) = 0 \}$.  We induct on $\dim(B)$.
If $\dim(B) = 0$, then $B = B_T$ for some $\aaa$-Tesler tableau $T$ with $\dim(T) = 0$ and the result follows, so 
assume $\dim(B) > 0$.  Since $\dim(B) > 0$, at least one row of $B$ has more than one positive entry.  Let 
$1 \leq i_0 \leq n-1$ be maximal such that row $i_0$ of $B$ has more than one positive entry.

For any $i_0 < j \leq n$ with $b_{i_0,j} > 0$, 
we define a subset $P_j = \{(p_1, q_1), (p_2, q_2), \dots \}$ 
of the matrix coordinates of $B$ (called the {\em positive path at $j$}) as follows.  Let $(p_1, q_1) = (i_0, j)$.
Given $(p_r, q_r) \in P_j$ with $p_r < n$, we define $(p_{r+1}, q_{r+1})$ by letting $p_{r+1} = q_r$
and letting $q_{r+1}$ be the column of the unique nonzero entry in row $q_r$ of $B$.
We also set $P_{i_0} = \{(i_0, j_0)\}$.  For example, if $\aaa = (4,3,1,1,1,2)$ and $B$ 
is the point in $\TP_6(\aaa)$ shown below, we have $i_0 = 2$ and
$P_3 = \{ (2,3), (3,5), (5,5) \},  P_4 = \{(2,4), (4,4)\},$ and $P_6 = \{(2,6), (6,6)\}$.
In general, for any distinct $j, j'$ we have $P_j \cap P_{j'} = \emptyset$.

\begin{center}
$B = \begin{bmatrix}
 0 & 2 & 0 & 1 & 1 & 0 \\
  & 0 & 2 & 2 & 0 & 1 \\
  &  & 0 & 0  & 3 & 0 \\
  &  &  & 4 & 0 &  0 \\
  &  &  &  & 5 & 0 \\
  &  &  &  &   & 3 
\end{bmatrix}$
\end{center}

Let $i_0 \leq j_0 < j_1 \leq n$ be such that  $c := b_{i_0, j_0}$ and $d := b_{i_0,j_1}$ are positive.  
We define two new upper triangular $n \times n$ matrices $B' = (b'_{i,j})$ and $B'' = (b''_{i,j})$ by the rules
\begin{equation}
b'_{i,j} = \begin{cases}
b_{i,j} + d & (i,j) \in P_{j_0} \\
b_{i,j} - d & (i,j) \in P_{j_1} \\
b_{i,j} & \text{otherwise}
\end{cases}
\end{equation}
and
\begin{equation}
b''_{i,j} = \begin{cases}
b_{i,j} - c & (i,j) \in P_{j_0} \\
b_{i,j} + c & (i,j) \in P_{j_1} \\
b_{i,j} & \text{otherwise.}
\end{cases}
\end{equation}
For example, if $B$ is as above with $i_0 = 2$, if we make the choices $j_0 = 3$ and $j_1 = 6$ the matrices
$B'$ and $B''$ are as follows.

\begin{center}
$B' = \begin{bmatrix}
 0 & 2 & 0 & 1 & 1 & 0 \\
  & 0 & 3 & 2 & 0 & 0 \\
  &  & 0 & 0  & 4 & 0 \\
  &  &  & 4 & 0 &  0 \\
  &  &  &  & 6 & 0 \\
  &  &  &   &   & 2
\end{bmatrix}$ \hspace{0.2in}
$B'' = \begin{bmatrix}
 0 & 2 & 0 & 1 & 1 & 0 \\
  & 0 & 0 & 2 & 0 & 3 \\
  &  & 0 & 0  & 1 & 0 \\
  &  &  & 4 & 0 &  0 \\
  &  &  &  & 3 & 0 \\
  &  &  &  &   & 5 
\end{bmatrix}$
\end{center}

It is straightforward to verify that both $B'$ and $B''$ lie in $\TP_n(\aaa)$.
Since $B'$ and $B''$ have one fewer positive entry
than $B$ in row $i_0$, we have $\dim(B') < \dim(B)$ and $\dim(B'') < \dim(B)$,
so that inductively 
$B' \in \mathrm{conv} \{B_T \,:\, \dim(T) = 0 \}$ and
$B'' \in \mathrm{conv} \{B_T \,:\, \dim(T) = 0 \}$.
Since $B = \frac{1}{c+d}(c B' + d B'')$, we conclude that
$B \in \mathrm{conv} \{B_T \,:\, \dim(T) = 0 \}$.

Since $\TP_n(\aaa) =  \mathrm{conv}\{B_T \,:\, \dim(T) = 0\}$, every vertex of $\TP_n(\aaa)$ is of the form $B_T$ for some 
$\aaa$-Tesler tableau $T$ with $\dim(T) = 0$.  We argue that every matrix $B_T$ is actually a vertex of $\TP_n(\aaa)$.
For otherwise, there would exist some $\aaa$-Tesler tableau $T$ with $\dim(T) = 0$ such that
\begin{equation*}
B_T = \sum_{\substack{\dim(T') = 0 \\ T' \neq T}} c_{T'} B_{T'},
\end{equation*}
for some $c_{T'} \geq 0$ with $\sum c_{T'} = 1$.  But this is impossible by Lemma~\ref{zero-dimensional-distinct}.  We conclude that
$B_T$ is a vertex of $\TP_n(\aaa)$.
\end{proof}

We are ready to characterize the face poset of $\TP_n(\aaa)$.

\begin{theorem}
\label{face-poset-characterization}
Let $\aaa \in (\ZZ_{\geq 0})^n$.  The support
map $\phi: F \mapsto T$ gives an isomorphism from the face poset of $\TP_n(\aaa)$ to the 
set of $\aaa$-Tesler tableaux, partially ordered by $\leq$.  For any face $F$, we have that $\dim(F) = \dim(\phi(F))$.
\end{theorem}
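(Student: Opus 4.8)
The plan is to exploit the fact that $\TP_n(\aaa)$ sits inside the nonnegative orthant and is cut out by the linear hook-sum equations together with the inequalities $x_{i,j}\ge 0$. For such a polytope every proper face is an intersection of the coordinate faces $\{x_{i,j}=0\}\cap\TP_n(\aaa)$, and a standard ``wiggling'' argument recovers each face from its support: if $p$ is a relative interior point of a face $F$ and $\sigma(F):=\{(i,j):x_{i,j}\not\equiv 0\text{ on }F\}$ is its support, then for any $q\in\TP_n(\aaa)$ vanishing off $\sigma(F)$ the point $p+\varepsilon(p-q)$ stays in $\TP_n(\aaa)$ for small $\varepsilon>0$, exhibiting $p$ as an interior point of the segment $[q,\,p+\varepsilon(p-q)]$ and forcing $q\in F$. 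Hence $F=\{x\in\TP_n(\aaa):x_{i,j}=0\text{ whenever }(i,j)\notin\sigma(F)\}$, so $F$ is determined by $\sigma(F)$ and $F_1\subseteq F_2\iff\sigma(F_1)\subseteq\sigma(F_2)\iff\phi(F_1)\le\phi(F_2)$. Thus $\phi$ is automatically an injective order-embedding, and by Lemma~\ref{faces-give-tableaux} its image lies in the set of $\aaa$-Tesler tableaux. It therefore remains to prove (A) that $\phi$ is surjective and (B) the dimension identity $\dim(F)=\dim(\phi(F))$; granting (A), an order-embedding that is a bijection is automatically a poset isomorphism.

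For surjectivity, fix an $\aaa$-Tesler tableau $T$ and set $F_T:=\{x\in\TP_n(\aaa):x_{i,j}=0\text{ when }T(i,j)=0\}$, which is a face (an intersection of coordinate faces) once we know it is nonempty. I would build a point of $F_T$ whose support is exactly the set of $1$-cells of $T$, proving simultaneously that $F_T\neq\emptyset$ and that $\phi(F_T)=T$. Process the rows from top to bottom; when row $k$ is reached the column feed $C_k=\sum_{i<k}x_{i,k}$ is already determined, and row $k$ must carry total mass $a_k+C_k$ spread positively over its $1$-cells. This is feasible precisely because of conditions (1)--(3): a zero row of $T$ has $a_k=0$ by condition (1) and receives no column feed by condition (2), so its required mass is $0$; a nonzero row has positive required mass because either $a_k>0$ (condition (1)) or, when $a_k=0$, condition (3) guarantees a $1$ above the diagonal in column $k$, which by induction contributes strictly positive feed $C_k>0$. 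Distributing $a_k+C_k$ equally among the $1$-cells of each nonzero row yields the desired point, so $\phi(F_T)=T$.

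For the dimension identity, note that the point constructed above is relatively interior to $F_T$ (all support coordinates are strictly positive), so in the support coordinates $F_T$ is full-dimensional in its affine hull and $\dim(F_T)=N-\mathrm{rank}(A')$, where $N$ is the number of $1$-cells and $A'$ is the matrix of the hook-sum equations restricted to the support. The equation indexed by a zero row of $T$ is identically $0$ on the support: its row part vanishes (the row is zero) and its column part vanishes by condition (2). The equations indexed by the nonzero rows are linearly independent, which I would show by peeling from the bottom. If $m$ is the largest nonzero row, then condition (2) forbids a $1$ in position $(m,j)$ with $j>m$ (such a $1$ would force the zero row $j$ to be nonzero), so the only support cell of row $m$ is the diagonal $(m,m)$; the variable $x_{m,m}$ occurs in no other equation, so its equation is independent and may be removed. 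Repeating on the remaining rows---at each stage the current largest nonzero row has support only in its own diagonal and in columns indexed by already-removed rows, and in the reduced system the corresponding variable occurs in no remaining equation but its own---gives independence of all $|R|$ nonzero-row equations. Hence $\mathrm{rank}(A')=|R|$ and $\dim(F_T)=N-|R|=\sum_{i}(r_i-1)=\dim(T)$.

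Combining the three steps, $\phi$ is a dimension-preserving poset isomorphism from the faces of $\TP_n(\aaa)$ onto the $\aaa$-Tesler tableaux. The crux of the argument is the surjective realization in the second paragraph: the three defining axioms of an $\aaa$-Tesler tableau are exactly the feasibility conditions that allow a nonnegative point with a prescribed support to exist, and getting the bookkeeping of the column feeds $C_k$ right (together with the base-point ``wiggling'' that underlies the recovery principle) is where the real work lies. I expect Lemmas~\ref{zero-dimensional-distinct} and~\ref{max-is-tesler} and the vertex case Lemma~\ref{vertex-characterization} to serve as consistency checks and as the $\dim=0$ specialization of this construction rather than as essential inputs.
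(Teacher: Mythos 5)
Your proposal is correct, and it takes a genuinely different route from the paper's proof. The paper argues bottom-up from vertices: it first shows (Lemma~\ref{vertex-characterization}, by induction on $n$, together with Lemmas~\ref{zero-dimensional-distinct} and~\ref{max-is-tesler}) that the vertices of $\TP_n(\aaa)$ are exactly the points $B_T$ for zero-dimensional tableaux $T$; then, given a face $F$ with vertices $B_{T_1},\dots,B_{T_k}$, it sets $T=\max(T_1,\dots,T_k)$ and proves $F=F(T)$, while the facts that $\dim(F(T))=\dim(T)$ and $\phi(F(T))=T$ are asserted as evident. You instead argue top-down with supports: the ``wiggling'' argument (valid because $\TP_n(\aaa)$ has the form $\{x\geq 0 : Ax=b\}$) shows every nonempty face is recovered from its support, so $\phi$ is an order-embedding; surjectivity comes from your explicit row-by-row construction of a point with prescribed support, which is exactly where axioms (1)--(3) enter as feasibility conditions; and the dimension identity comes from the rank/triangularity computation for the hook-sum equations restricted to the support. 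Your route is self-contained---it uses none of Lemmas~\ref{zero-dimensional-distinct}, \ref{max-is-tesler}, \ref{vertex-characterization}, and indeed recovers the vertex characterization as the zero-dimensional special case---and it supplies precisely the details (the realization $\phi(F(T))=T$ and the rank count behind $\dim(F(T))=\dim(T)$) that the paper leaves implicit; your construction can be seen as generalizing the paper's row-by-row construction of $B_T$ from zero-dimensional tableaux to arbitrary ones. What the paper's organization buys in exchange is the standalone vertex description of $\TP_n(\aaa)$, which is reused later (Corollary~\ref{dimension}, Theorem~\ref{simple}, Corollary~\ref{mahonian}), and an argument that avoids any linear-algebraic rank computation by reducing everything to the combinatorics of zero-dimensional tableaux.
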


\begin{proof}
For any $\aaa$-Tesler tableau $T$, define a face $F(T) \subseteq \TP_n(\aaa)$ by letting $F(T)$ be the intersection of the hyperplanes
$\{ x_{i,j} = 0 \,:\, T(i, j) = 0\}$ within the ambient affine subspace 
\begin{equation*}
\bigcap_{i=1}^n \{ x_{i,i} + x_{i, i+1} + \cdots + x_{i,n} = a_i + x_{1,i} + \cdots + x_{i-1,i} \}
\end{equation*}
of $\{(x_{i,j}) \,:\, x_{i,j} \in \RR, 1 \leq i \leq j \leq n\}$.
It is evident that 
$\dim(F(T)) = \dim(T)$ and that $\phi(F(T)) = T$.  Moreover, we have that $T_1 \leq T_2$ if and only if $F(T_1) \subseteq F(T_2)$.
It therefore suffices to show that every face of $\TP_n(\aaa)$ is of the form $F(T)$ for some $\aaa$-Tesler tableau $T$.

Let $F$ be a face of $\TP_n(\aaa)$.  By Lemma~\ref{vertex-characterization}, there exist zero-dimensional $\aaa$-Tesler tableaux
$T_1, \dots, T_k$ such that $B_{T_1}, \dots, B_{T_k}$ are the vertices of $F$.  Let $T = \max(T_1, \dots, T_k)$.  By
Lemma~\ref{max-is-tesler} we have that $T$ is an $\aaa$-Tesler tableau.  It is clear that $F \subseteq F(T)$.  We argue that $F(T) \subseteq F$.
To see this, suppose that $1 \leq i \leq j \leq n$ and the defining hyperplane $x_{i,j} = 0$ of $\TP_n(\aaa)$ contains $F$.  Then in particular
we have that $x_{i,j} = 0$ contains $B_{T_1}, \dots, B_{T_k}$, so that 
$T_1(i, j) = \cdots = T_k(i, j) = 0$.  This means that $T(i, j) = 0$, so that $x_{i,j} = 0$ contains $F(T)$.  We conclude that $F = F(T)$.
\end{proof}

Given any vector $\aaa \in (\ZZ_{\geq 0})^n$, we let $\epsilon(\aaa) \in \{0, +\}^n$ be the associated {\bf signature}; for example,
$\epsilon(7,0,3,0) = (+, 0, +, 0)$.  Theorem~\ref{face-poset-characterization} implies that the combinatorial isomorphism type of
$\TP_n(\aaa)$ depends only on the signature $\epsilon(\aaa)$.

As a first application of Theorem~\ref{face-poset-characterization}, we
determine the dimension of $\TP_n(\aaa)$  and give an upper bound
on the number of its vertices.  When $\aaa \in \mathbb{Z}_{>0}^n$ the result about the dimensionality also follows from \cite{BV2}. Observe that if $a_1 = 0$, the first rows of the matrices in 
$\TP_n(\aaa)$ vanish and we have the identification $\TP_n(\aaa) = \TP_{n-1}(a_2, a_3, \dots, a_n)$.  We may therefore restrict to the 
case where $a_1 > 0$.

\begin{corollary}
\label{dimension}
Let $\aaa = (a_1, \dots, a_n) \in (\ZZ_{\geq 0})^n$ 
and assume $a_1 > 0$.  The polytope $\TP_n(\aaa)$ has dimension ${n \choose 2}$ and at most $n!$ vertices. 
Moreover, the polytope $\TP_n(\aaa)$ has exactly $n!$ vertices if and only if $a_2, a_3, \dots, a_{n-1} > 0$.
\end{corollary}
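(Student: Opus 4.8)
The plan is to deduce all three assertions from the face-poset description in Theorem~\ref{face-poset-characterization} together with the vertex description in Lemma~\ref{vertex-characterization}, thereby reducing everything to the combinatorics of $\aaa$-Tesler tableaux. For the dimension, I would observe that the whole polytope $\TP_n(\aaa)$ is the top element of its own face poset, so under the isomorphism $\phi$ it corresponds to the componentwise-largest $\aaa$-Tesler tableau. The all-ones filling of $\mathrm{rstc}_n$ dominates every tableau, and it is itself an $\aaa$-Tesler tableau precisely because $a_1 > 0$: this is what makes condition (3) vacuous at $j = 1$, while conditions (1) and (2) hold trivially for an all-ones filling. Since row $i$ of $\mathrm{rstc}_n$ has $n - i + 1$ cells, the dimension of this top tableau is $\sum_{i=1}^n\big((n-i+1)-1\big) = \binom{n}{2}$, and hence $\dim \TP_n(\aaa) = \binom{n}{2}$.

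For the vertex bound, Lemma~\ref{vertex-characterization} identifies the vertices with the zero-dimensional $\aaa$-Tesler tableaux, that is, the fillings of $\mathrm{rstc}_n$ with at most one $1$ in each row. I would encode such a tableau $T$ by the sequence $(d_1,\dots,d_n)$, where $d_i = c(i) - i$ if row $i$ carries its single $1$ in column $c(i)$, and $d_i = 0$ if row $i$ is empty; this lands in $\prod_{i=1}^n\{0,1,\dots,n-i\}$, a set of size $n!$. The key step is to prove this encoding injective, which I would do by reconstructing $T$ from $(d_1,\dots,d_n)$ one row at a time. Using conditions (1)--(3), row $i$ is nonzero if and only if $a_i > 0$ or some earlier nonzero row $i' < i$ satisfies $d_{i'} = i - i'$, a condition involving only $d_1,\dots,d_{i-1}$ and the signature $\epsilon(\aaa)$; once the nonzero rows are known, the positions of the $1$'s are read off from the $d_i$. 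This determines $T$, so the map is injective and $\TP_n(\aaa)$ has at most $n!$ vertices.

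Finally, for the equality criterion I would describe the image of the encoding: a sequence $(d_1,\dots,d_n)$ is realized precisely when $d_i = 0$ for every row $i$ that the reconstruction declares empty, since empty rows always encode as $0$. If $a_2,\dots,a_{n-1} > 0$, then because $a_1 > 0$ as well the only row that can be empty is row $n$, for which $d_n \in \{0\}$ is forced; hence every sequence is realized and the encoding is a bijection onto a set of size $n!$. Conversely, if $a_i = 0$ for some $2 \leq i \leq n-1$, I would exhibit an unrealized sequence by setting $d_{i'} = 0$ for all $i' < i$, so that row $i$ is not hit and is therefore empty, while taking $d_i = 1$ (possible since $i < n$); this $d$ lies outside the image, so the count is strictly less than $n!$.

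I expect the main obstacle to be the injectivity argument, specifically the fact that a genuine diagonal entry $c(i) = i$ and an empty row $i$ both encode as $d_i = 0$. The resolution is the observation above that the \emph{status} of row $i$, empty versus nonzero, is forced by $\epsilon(\aaa)$ and by $d_1,\dots,d_{i-1}$ alone, independently of $d_i$; Lemma~\ref{zero-dimensional-distinct} is the coordinate-free shadow of this same phenomenon.
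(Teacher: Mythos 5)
Your proposal is correct and takes essentially the same approach as the paper: the dimension argument via the all-ones tableau (valid precisely because $a_1 > 0$) is identical, and your offset sequences $(d_1,\dots,d_n) \in \prod_{i=1}^n \{0,1,\dots,n-i\}$ are just a coordinate rendering of the paper's injection into maximal file rook placements on $\mathrm{rstc}_n$, with the same row-status reconstruction from conditions (1)--(3) giving injectivity. Your unrealized sequence in the converse direction (all rows before $i$ diagonal, row $i$ shifted off the diagonal) is exactly the paper's witness for non-surjectivity when $a_i = 0$ for some $1 < i < n$.
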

\begin{proof}
The claim about dimension follows from the fact that the mapping $T(i, j) = 1$ for $1 \leq i \leq j \leq n$ is an $\aaa$-Tesler tableau of
dimension ${n \choose 2}$ (since 
$a_1 > 0$).  

Recall that a {\bf file rook} is a rook which can attack horizontally, but not vertically
(see for example \cite[Definition 1]{BCHR}).  
There is an injective mapping 
from the set of zero-dimensional $\aaa$-Tesler tableaux to the set of maximal file rook placements on
$\mathrm{rstc}_n$ by placing a file rook in the position of every $1$ in $T$, together with a  file rook on the main diagonal
of any zero row of $T$.  Since there are $n!$ maximal file rook placements on $\mathrm{rstc}_n$, by
Theorem~\ref{face-poset-characterization}
we have that 
$\TP_n(\aaa)$ has at most $n!$ vertices.

If $a_2, a_3, \dots, a_{n-1} > 0$, then a zero-dimensional $\aaa$-Tesler tableau $T$ contains a unique $1$ in every row, with
the possible exception of row $n$ (which consists of a single cell).  Thus,
every maximal file rook placement on $\mathrm{rstc}_n$ arises from a zero-dimensional $\aaa$-Tesler tableau.  It follows that
$\TP_n(\aaa)$ has $n!$ vertices.  On the other hand, if $a_i = 0$ for some $1 < i < n$, then for any zero-dimensional $\aaa$-Tesler tableau
$T$ we have that $T(j,k) = 0$ for all $j < k$ implies $T(i,i) = 0$.  In terms of the corresponding
file rook placements, this means that if the 
file rooks in every row other than $i$ are on the main diagonal, then the file rook in row $i$ is also on the main diagonal.  
In particular, the mapping from zero-dimensional $\aaa$-Tesler tableaux to maximal file 
rook placements on $\mathrm{rstc}_n$ is not surjective and the polytope $\TP_n(\aaa)$ has $< n!$ vertices.
\end{proof}

Theorem~\ref{face-poset-characterization} can also be used to characterize when $\TP_n(\aaa)$ is a simple polytope.

\begin{theorem}
\label{thm:charsimple}
\label{simple}
Let $\aaa = (a_1, \dots, a_n) \in (\ZZ_{\geq 0})^n$ and let
$\epsilon(\aaa) = (\epsilon_1, \dots, \epsilon_n) \in \{0, +\}^n$ 
be the associated signature.  
Assume that $\epsilon_1 = +$.
The polytope $\TP_n(\aaa)$ is a simple polytope if and only if 
$n \leq 3$ or $\epsilon(\aaa)$ is one of $+^n, +^{n-1}0, +0+^{n-2}$ or $+0+^{n-3}0$.
\end{theorem}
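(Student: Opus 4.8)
The plan is to convert simplicity into a combinatorial edge count on $\aaa$-Tesler tableaux via Theorem~\ref{face-poset-characterization}, and to pin that count down with an exact formula. Recall that every vertex of a $d$-polytope lies on at least $d$ edges, with equality at every vertex exactly when the polytope is simple. Since $\dim \TP_n(\aaa) = \binom{n}{2}$ by Corollary~\ref{dimension} (as $\epsilon_1 = +$), and since $\phi$ is a poset isomorphism carrying $i$-dimensional faces to $i$-dimensional tableaux, I would first record that $\TP_n(\aaa)$ is simple if and only if every zero-dimensional $\aaa$-Tesler tableau $T_0$ lies below exactly $\binom{n}{2}$ one-dimensional $\aaa$-Tesler tableaux. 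I would also reformulate the goal: the four listed signatures together with the range $n \le 3$ are precisely the signatures $\epsilon(\aaa)$ with \emph{no interior zero}, meaning $\epsilon_i = +$ for all $3 \le i \le n-1$.

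The crux is an exact formula for the number $E(T_0)$ of one-dimensional tableaux above a vertex $T_0$. Any such tableau is obtained from $T_0$ by choosing a row $p$ that is nonzero in $T_0$ and adjoining one extra $1$ in a cell $(p,j)$ of that row, together with whatever activations of zero rows are then forced by conditions $(2)$ and $(3)$; the two-$1$ row must be nonzero in $T_0$, since turning a zero row into a two-$1$ row would require a fresh $1$ in its column from above and hence a \emph{second} two-$1$ row, raising the dimension. Writing $Z_0$ for the set of zero rows of $T_0$, letting $N_{<j}$ and $N_{>j}$ count the nonzero rows of $T_0$ strictly left/right of column $j$, and letting $W_j$ (for $j \in Z_0$) count the resolutions of the forced activation of row $j$ — which obeys $W_j = 1 + N_{>j} + \sum_{m > j,\, m \in Z_0} W_m$ — I would prove
\[
E(T_0) = \binom{n - |Z_0|}{2} + \sum_{j \in Z_0} W_j \, N_{<j}.
\]
The decisive and somewhat surprising point, to be verified by a row-by-row bookkeeping of cells, is that this value depends only on the set $Z_0$, not on where the $1$'s of the nonzero rows actually sit: the non-activating contributions collapse to $\binom{n-|Z_0|}{2}$ independently of those positions.

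For the non-simple direction, given an interior zero $i$ with $3 \le i \le n-1$, I would exhibit one vertex $T_0$ whose unique zero row is row $i$. Starting from the all-diagonal placement, I activate every other zero row $j \ne i$ by a cascade of off-diagonal $1$'s initiated in row $1$ and routed around column $i$; this is possible because $i \ge 3$ places row $1$ strictly above row $i$, and the cascade never puts a $1$ into column $i$ above row $i$. Then $Z_0 = \{i\}$, so $W_i = n-i+1$ and the formula yields $E(T_0) = \binom{n-1}{2} + (n-i+1)(i-1) = \binom{n}{2} + \big[(n-i+1)(i-1) - (n-1)\big]$. As $(n-i+1)(i-1)$ is strictly concave in $i$ and equals $n-1$ at $i=2$ and $i=n$, the bracket is strictly positive for $3 \le i \le n-1$; thus $T_0$ lies on more than $\binom{n}{2}$ edges and $\TP_n(\aaa)$ is not simple.

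For the simple direction, if $\epsilon(\aaa)$ has no interior zero then every zero-signature position lies in $\{2,n\}$, so $Z_0 \subseteq \{2,n\}$ for \emph{every} vertex. Evaluating the formula in the four cases $Z_0 \in \{\emptyset,\{2\},\{n\},\{2,n\}\}$ — using $W_n = 1$ (row $n$ is a single cell) and $W_2 = n-1$ (row $2$ has only row $1$ as a possible source) — gives $E(T_0) = \binom{n}{2}$ in each case, so $\TP_n(\aaa)$ is simple; this also disposes of $n \le 3$, where no interior position exists. The main obstacle I anticipate is establishing the position-independence encoded in the displayed formula, namely that the forced activations always form a single chain counted by $W_j$ and that the remaining contributions telescope to $\binom{n-|Z_0|}{2}$ regardless of the nonzero rows' positions; a secondary technical point is the explicit construction, for an arbitrary interior zero $i$, of a vertex with $Z_0 = \{i\}$ keeping column $i$ empty above row $i$.
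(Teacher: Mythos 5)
Your proposal is correct, and it takes a genuinely different route from the paper's proof, even though both start from the same place: via Theorem~\ref{face-poset-characterization}, simplicity becomes the assertion that every zero-dimensional $\aaa$-Tesler tableau $T_0$ lies below exactly $\binom{n}{2}$ one-dimensional tableaux. From there the paper works with pairwise adjacency of vertices and a circled-tableau device that yields only a \emph{lower bound} on vertex degree; it proves non-simplicity with two case-specific witnesses (a ``diagonal'' tableau when some $\epsilon_i=+$ precedes a zero $\epsilon_j$ with $1<i<j<n$, and a ``near-diagonal'' one for signatures $+0^i+^{n-i-1}$), invokes the affine reduction $\TP_n(a_1,\ldots,a_n)\cong\TP_n(a_1,\ldots,a_{n-1},0)$ to cover the remaining signatures, and does exact counts only for the two simple forms $+^n$ and $+0+^{n-2}$. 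You instead prove an \emph{exact} degree formula $E(T_0)=\binom{n-|Z_0|}{2}+\sum_{j\in Z_0}W_jN_{<j}$ at every vertex, depending only on the set $Z_0$ of zero rows. I checked its ingredients: the doubled row of a one-dimensional tableau above $T_0$ must be nonzero in $T_0$ (the upward chain of forced fresh $1$'s terminates, since $\epsilon_1=+$, at a row of $T_0$ that gets doubled); the forced activations form a single increasing chain obeying your recursion for $W_j$; the non-activating contributions telescope to $\binom{n-|Z_0|}{2}$ precisely because the unique $1$ of a nonzero row of $T_0$ never occupies a $Z_0$-column; the four evaluations for $Z_0\subseteq\{2,n\}$ all give $\binom{n}{2}$; the concavity inequality $(n-i+1)(i-1)>n-1$ holds for $3\le i\le n-1$; and a vertex with $Z_0=\{i\}$ exists by your cascade through the other zero rows, which never places a $1$ in column $i$ (a Hall-type argument gives the same). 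As a sanity check, your formula reproduces the paper's own example: for $(+,0,0,0,+)$ and the vertex with $1$'s at $(1,4),(4,5),(5,5)$ one gets $E(T_0)=\binom{3}{2}+6\cdot1+3\cdot1=12>10=\binom{5}{2}$, consistent with the paper's bound; indeed the paper's circled entry $n-j+1$ is exactly your recursion truncated by replacing each $W_m$ with $1$. The trade-off: the paper's route is more economical, since a lower bound suffices for non-simplicity and no existence argument for a prescribed $Z_0$ is needed, while yours is more uniform and strictly stronger -- one formula gives the exact degree of every vertex, and the signature case analysis and the reduction lemma disappear. When you flesh this out, the step needing the most care is the single-chain claim: a priori two activating $1$'s could point into the same zero row, and you should exclude this by counting in- and out-degrees in the pointer digraph and using that all pointers strictly increase.
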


\begin{proof}
When $n = 1$ the polytope $\TP_1(\aaa)$ is a single point.  When $n = 2$ the polytope $\TP_2(\aaa)$ is an interval.
When $n = 3$ the polytope $\TP_3(\aaa)$ is a $3$-simplex $\Delta_3$ if $\epsilon_2 = 0$ and
the triangular prism $\Delta_1 \times \Delta_2$ if $\epsilon_2 = +$.  In either case,  we have that $\TP_3(\aaa)$ is simple.

In general, the vertices of $\TP_n(\aaa)$ correspond to zero-dimensional $\aaa$-Tesler tableaux $T$.  
We may therefore speak of ``adjacent" zero-dimensional $\aaa$-Tesler tableaux $T_1$ and $T_2$ to mean that the corresponding 
vertices $B_{T_1}$ and $B_{T_2}$ are connected by an edge of $\TP_n(\aaa)$.
Given two distinct
$\aaa$-Tesler tableau $T_1, T_2$ with $\dim(T_1) = \dim(T_2) = 0$, by Theorem~\ref{face-poset-characterization}
we know that $T_1$ and $T_2$ are adjacent if and only if for all $1 \leq i \leq n$, row $i$ of $T_2$ can be obtained
from row $i$ of $T_1$ by 
\begin{enumerate}
\item leaving row $i$ of $T_1$ unchanged,
\item changing the unique $1$ in row $i$ of $T_1$ to a $0$,
\item changing a single $0$ in row $i$ to $T_1$ to a $1$ (if row $i$ of $T_1$ is a zero row), or
\item moving the unique $1$ in row $i$ of $T_1$ to a different position in row $i$.
\end{enumerate}
Moreover, the Operation $(4)$ must take place in precisely one row of $T_1$.

Given a fixed $\aaa$-Tesler tableau $T$ with $\dim(T) = 0$, 
we can replace the $0$'s in $T$ with entries in the set 
$\{ \circled{\,{\it i}\,} \,:\, i \in \ZZ_{\geq 0} \}$ to keep track of some of the adjacent zero-dimensional $\aaa$-Tesler tableaux.
In particular, we define a new filling $T^{\circ}$ of $\mathrm{rstc}_n$ 
using the alphabet $\{1, \circled{0}, \circled{1}, \circled{2}, \dots\}$ 
as follows.
\begin{itemize}
\item  If $T(i,j) = 1$, set $T^{\circ}(i,j) = 1$.
\item  If $T(i,j) = 0$ and row $i$ of $T$ is zero, then set $T^{\circ}(i, j) = \circled{0}$.
\item If $T(i,j) = 0$, row $i$ of $T$ is nonzero,  and row $j$ of $T$ is nonzero, then set 
$T^{\circ}(i, j) = \circled{1}$.
\item If $T(i,j) = 0$, row $i$ of $T$ is nonzero, and row $j$ of $T$ is zero, then set $T^{\circ}(i, j) = \circled{{\it j'}}$, where
$j' = n-j+1$ is the number of boxes in row $j$.
\end{itemize}
Observe that in the first case  we necessarily have $\epsilon_i = 0$ and in the  third case  we necessarily have $\epsilon_j = 0$.
For example, suppose $n = 5$ and $(\epsilon_1, \dots, \epsilon_5) = (+, 0, 0, 0, +)$.  Applying the above rules to the 
zero-dimensional $\aaa$-Tesler tableau $T$ shown below yields the given $T^{\circ}$.
\begin{center}
\begin{Young}
, \cr
, \cr
,$T = $ \cr
,\cr 
,\cr 
\end{Young}
\begin{Young}
,+ & 0 & 0 &0 &1 &0 \cr
,0 & , & 0& 0& 0&0 \cr
,0 &, &, & 0 & 0 & 0 \cr
,0 &, &, &, &  0 & 1 \cr
,+ &, &, &, &, & 1 \cr
\end{Young}
\begin{Young}
, \cr
, \cr
,$\leadsto$ \cr
,\cr 
,\cr 
\end{Young}
\begin{Young}
,+ & \circled{1} & \circled{4} & \circled{3} &1 & \circled{1} \cr
,0 & , & \circled{0} & \circled{0} & \circled{0} & \circled{0} \cr
,0 &, &, & \circled{0} & \circled{0} & \circled{0}  \cr
,0 &, &, &, &  \circled{1} & 1 \cr
,+ &, &, &, &, & 1 \cr
\end{Young}
\begin{Young}
, \cr
, \cr
,$= T^{\circ} $ \cr
,\cr 
,\cr 
\end{Young}
\end{center}

For any $\aaa$-Tesler tableau $T$ with $\dim(T) = 0$, we claim that the number of 
adjacent zero-dimensional $\aaa$-Tesler tableaux
is at least the sum of the circled entries in the associated tableau $T^{\circ}$.
For example, the number of adjacent tableaux in the case shown above is $\geq 1 + 4 + 3 + 1 + 1 = 10$.  To see this, observe
that for any adjacent zero-dimensional $\aaa$-Tesler tableau $T'$, there is precisely one row $i$ such that both $T$ and $T'$ contain a 
$1$ in row $i$, but this $1$ is in a different position (corresponding to Operation $(4)$ above).  
We can view $T'$ as being obtained from $T$ by moving this $1$ in row $i$,
and then possibly changing entries in lower rows (corresponding to Operations $(2)$ and $(3)$ above).
If this $1$ is moved to a position $(i, j)$ such that row $j$ of $T$ is zero, then one of the $j' = n-j+1$ $0$'s
 in row $j$ of $T'$ must be changed to a $1$.  In the example above, if the $1$ in position $(1,4)$ is moved to $(1,2)$, then one of the four
 $0$'s in positions $(2,2), (2, 3), (2, 4),$ and $(2,5)$ must be changed to a $1$, which corresponds to the circled $4$ in position
 $(1,2)$ of $T^{\circ}$.  
 We emphasize that this lower bound on the number of adjacent tableaux is not tight in general; for example, if we move the $1$
 in row $1$ in the above tableau from  $(1,4)$ to $(1,2)$ and change the $0$ in position $(2,3)$ to a $1$, then we must change one
 of the three $0$'s in row $3$ to a $1$, leading to more options for adjacent tableaux.
 In particular, the number of adjacent tableaux to the tableau $T$ shown above is 
 $> 10 = {5 \choose 2} = \dim(\TP_5(+,0,+,0,+,+))$ and the polytope $\TP_5(+,0,+,0,+,+)$ is not simple.
 
 Suppose that $n > 3$ and there exist indices $1 < i < j < n$ such that $\epsilon_i = +$ and $\epsilon_j = 0$.  We argue that 
 $\TP_n(\aaa)$ is not simple by exhibiting an $\aaa$-Tesler tableau $T$ such that $T$ has $> {n \choose 2} = \dim(\TP_n(\aaa))$ adjacent
 zero-dimensional $\aaa$-Tesler tableaux.  Indeed, let $T$ be the ``diagonal" $\aaa$-Tesler tableau defined by
 $T(k, \ell) = 0$ whenever $1 \leq k < \ell \leq n$, $T(i, i) = 1$ if $\epsilon_i = +$, and $T(i, i) = 0$ if $\epsilon_i = 0$.  Perform the above circling
 procedure to $T$ to get the tableau $T^{\circ}$; the example $\epsilon = (+, 0, +, 0, +, +)$ is shown below.
\begin{center}
\begin{Young}
,+ & 1 & 0 &0 &0 &0 & 0  \cr
,0 & , & 0& 0& 0&0 & 0 \cr
,+ &, &, & 1 & 0 & 0 & 0 \cr
,0 &, &, &, &  0 & 0 & 0  \cr
,+ &, &, &, &, & 1 & 0  \cr
,+ &, &, &, &, &, & 1 \cr
\end{Young}
\begin{Young}
, \cr
, \cr
,$\leadsto$ \cr
,\cr 
,\cr 
,\cr
\end{Young}
\begin{Young}
,+ &1 & \circled{5} & \circled{1} & \circled{3} & \circled{1} & \circled{1} \cr
,0 & , & \circled{0} & \circled{0} & \circled{0} & \circled{0}  & \circled{0} \cr
,+ &, &, & 1 & \circled{3} & \circled{1} & \circled{1} \cr
,0 &, &, &, &  \circled{0} & \circled{0} & \circled{0} \cr
,+ &, &, &, &, & 1 & \circled{1} \cr
,+ &, &, &, &, &, & 1 \cr
\end{Young}
\end{center}
We claim that the sum of the circled entries in row $1$ of $T^{\circ}$, plus the number of circled positive entries in the remaining rows 
of $T^{\circ}$,
equals ${n \choose 2}$.  
Indeed,
since $\epsilon_1 > 0$, we have the entry in position $(1, k)$ of $T^{\circ}$ is a positive circled number for $2 \leq k \leq n$.  
If $T^{\circ}(1,k) = \circled{1}$,
then row $k$ of $T$ is nonzero, so that row $k$ of $T^{\circ}$ consists of precisely one $1$, together with 
$n-k$ $\circled{1}$'s.
If $T^{\circ}(1,k) = \circled{$k'$}$ for some $k' > 1$, we must have that $k' = n-k+1$, $\epsilon_k = 0$, and
row $k$ of $T^{\circ}$ consists entirely of $\circled{0}$'s.
In either case, the circled entry in $T^{\circ}(1,k)$, plus the number of positive circled entries in row $k$ of $T^{\circ}$,
is one plus the number of boxes in row $k$ of $T^{\circ}$.
On the other hand, the entry in position $(i, j)$ of $T^{\circ}$
is a circled number $> 1$ because $\epsilon_j = 0$
and $j < n$.  This means that the sum of the circled entries is $> {n \choose 2}$, the tableau
$T$ has $> {n \choose 2}$ adjacent zero-dimensional tableaux, and the polytope
$\TP_n(\aaa)$ is not simple.

Suppose that $n > 3$ and $\epsilon$ has the form $\epsilon = + 0^i +^{n-i-1}$ for some $1 < i < n$.  Let $T$ be the 
``near-diagonal" zero-dimensional $\aaa$-Tesler tableau defined by $T(1,2) = T(2,2) = 1$, $T(j, j) = 1$ for $i < j \leq n$, and
$T(k, \ell) = 0$ otherwise.  Perform the above circling procedure to $T$ to get $T^{\circ}$; the case
$\epsilon = (+, 0, 0, 0, +, +)$ is shown below.
\begin{center}
\begin{Young}
,+ & 0 & 1 &0 &0 &0 & 0  \cr
,0 & , & 1& 0& 0&0 & 0 \cr
,0 &, &, & 0 & 0 & 0 & 0 \cr
,0 &, &, &, &  0 & 0 & 0  \cr
,+ &, &, &, &, & 1 & 0  \cr
,+ &, &, &, &, &, & 1 \cr
\end{Young}
\begin{Young}
, \cr
, \cr
,$\leadsto$ \cr
,\cr 
,\cr 
,\cr
\end{Young}
\begin{Young}
,+ & \circled{1} & 1 & \circled{4} & \circled{3} & \circled{1} & \circled{1} \cr
,0 & , &1 & \circled{4} & \circled{3} & \circled{1}  & \circled{1} \cr
,0 &, &, & 0 & \circled{0} & \circled{0} & \circled{0} \cr
,0 &, &, &, &  0 & \circled{0} & \circled{0} \cr
,+ &, &, &, &, & 1 & \circled{1} \cr
,+ &, &, &, &, &, & 1 \cr
\end{Young}
\end{center}
A similar argument as in the last paragraph shows that
the sum of the circled entries in row $1$ of $T^{\circ}$, plus the number of positive circled entries in the remaining rows of $T^{\circ}$, equals 
${n \choose 2}$.  On the other hand, since $1 < i < n$ and $n > 3$, at least one of the circled entries in row $2$ of $T^{\circ}$ is $> 1$.  
We conclude that the sum of all the circled entries is $> {n \choose 2}$, so that $\TP_n(\aaa)$ is not simple.

If $\epsilon_n = +$, let $\aaa' = (a_1, a_2, \dots, a_{n-1}, 0)$.  We claim that the polytopes $\TP_n(\aaa)$
and $\TP_n(\aaa')$ are affine isomorphic:  $\TP_n(\aaa) \cong \TP_n(\aaa')$.  Indeed, an isomorphism $B \mapsto B'$ is obtained
by subtracting $a_n$ from the $(n, n)$-entry of any matrix $B \in \TP_n(\aaa)$.  By this fact and the last two paragraphs, the polytope
$\TP_n(\aaa)$ is not simple unless $\epsilon(\aaa)$ has one of the four forms given in the statement of the theorem.   
Also by this fact, to complete the proof we need only show that $\TP_n(\aaa)$ is simple when $\epsilon(\aaa)$ has one of the two
forms $+^n$ or $+0+^{n-2}$.

If $\epsilon(\aaa) = +^n$, then any zero-dimensional $\aaa$-Tesler tableau has a unique $1$ in every row.
Given an $\aaa$-Tesler tableau $T$ with $\dim(T) = 0$, the tableaux adjacent to $T$ can be obtained by moving a single $1$ to 
a different position in its row.  There are $(n-1) + (n-2) + \cdots + 1 = {n \choose 2} = \dim(\TP_n(\aaa))$ ways to do this, so 
the polytope $\TP_n(\aaa)$ is simple.

If $\epsilon(\aaa) = +0+^{n-2}$, then any zero-dimensional $\aaa$-Tesler tableau $T$ has a unique $1$ in every row, with the possible
exception of row $2$.   In particular, row $2$ of $T$ contains a $1$ if and only if the $1$ in row $1$ of $T$ is in position $(1, 2)$.  
In either case, we see that $T$ is adjacent to precisely ${n \choose 2}$ tableaux, so that $\TP_n(\aaa)$ is simple.
\end{proof}

We now focus on the case of greatest representation theoretic interest in the context of diagonal harmonics: where $\epsilon(\aaa) = +^n$, so that 
every entry of $\aaa$ is a positive integer.  
The combinatorial
isomorphism type of $\TP_n(\aaa)$ is immediate from Theorem~\ref{face-poset-characterization}.  We denote 
by $\Delta_d$ the $d$-dimensional simplex in $\RR^{d+1}$ defined by
$\Delta_d := \{(x_1, \dots, x_{d+1}) \in \RR^{d+1} \,:\, x_1 + \cdots + x_{d+1} = 1, x_1 \geq 0, \dots, x_{d+1} \geq 0 \}$.

\begin{corollary}
\label{increasing-simplices}
Let $\aaa \in (\ZZ_{> 0})^n$ be a vector of positive integers.
The face poset of the Tesler polytope $\TP_n(\aaa)$ is isomorphic to
the face poset of the Cartesian product  of simplices
$\Delta_1 \times \Delta_2 \times \cdots \times \Delta_{n-1}$.
\end{corollary}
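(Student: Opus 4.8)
The plan is to invoke Theorem~\ref{face-poset-characterization}, which identifies the face poset of $\TP_n(\aaa)$ with the set of $\aaa$-Tesler tableaux ordered by the componentwise relation $\leq$, and then to analyze how this set of tableaux collapses when $\epsilon(\aaa) = +^n$. The first step is to observe that positivity of every $a_i$ reduces the three defining conditions of an $\aaa$-Tesler tableau to the single requirement that \emph{every row contains at least one $1$}: condition (1) forces each row to be nonzero, condition (2) is then automatic since every row $j$ is nonzero, and condition (3) is vacuous because no $a_j$ equals $0$. Hence an $\aaa$-Tesler tableau is exactly a $0,1$-filling of $\mathrm{rstc}_n$ with no zero row.

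Next I would factor this poset over the rows of $\mathrm{rstc}_n$. Since both the constraint ``nonzero row'' and the order $\leq$ act independently in each row, the poset of $\aaa$-Tesler tableaux is the direct product, over $i = 1, \dots, n$, of the poset $Q_i$ of nonzero $0,1$-vectors of length $n - i + 1$ (the number of cells of $\mathrm{rstc}_n$ in row $i$), each ordered componentwise. The crucial identification is that the poset of nonzero $0,1$-vectors of length $m$, ordered componentwise, is isomorphic to the poset of nonempty faces of the simplex $\Delta_{m-1}$: a nonempty subset $S$ of the $m$ vertices of $\Delta_{m-1}$ corresponds to its indicator vector, inclusion of subsets matches the componentwise order, and the face spanned by $S$ has dimension $|S| - 1$. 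Taking $m = n - i + 1$ identifies $Q_i$ with the nonempty faces of $\Delta_{n-i}$.

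To conclude, I would use the standard fact that the face poset of a Cartesian product of polytopes is the product of the face posets of its factors (taking nonempty faces). This yields that the face poset of $\TP_n(\aaa)$ is isomorphic to that of $\Delta_{n-1} \times \Delta_{n-2} \times \cdots \times \Delta_1 \times \Delta_0$. The final factor $\Delta_0$ is a single point and contributes trivially, so it may be dropped; reindexing the remaining factors gives $\Delta_1 \times \Delta_2 \times \cdots \times \Delta_{n-1}$, as asserted.

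No step presents a genuine obstacle, since the corollary is essentially a specialization of Theorem~\ref{face-poset-characterization}. The only point requiring care is the bookkeeping: one must check that the dimension formula $\dim(T) = \sum_i (r_i - 1)$ of an $\aaa$-Tesler tableau matches the sum of the dimensions of the corresponding simplex faces (so that the isomorphism is dimension-preserving), and that the empty-face conventions are handled consistently so that the product-of-face-posets identity applies verbatim.
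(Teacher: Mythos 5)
Your proposal is correct and follows exactly the route the paper intends: the paper gives no written proof, asserting that the corollary is ``immediate'' from Theorem~\ref{face-poset-characterization}, and your argument simply supplies the omitted details (positivity of $\aaa$ reduces the tableau conditions to ``no zero row,'' the componentwise order factors over rows, and each row's poset of nonzero $0,1$-vectors is the poset of nonempty faces of a simplex). The dimension bookkeeping and the nonempty-face convention you flag are handled consistently, so nothing is missing.
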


\begin{corollary}
\label{mahonian}
Let $\aaa \in (\ZZ_{> 0})^n$ be a vector of positive integers.
The $h$-polynomial of the Tesler polytope $\TP_n(\aaa)$ is the Mahonian distribution
\begin{equation*}
\sum_{i = 0}^{{n \choose 2}} h_i x^i = [n]!_x = (1+x)(1+x+x^2) \cdots (1+x+x^2 + \cdots + x^{n-1}).
\end{equation*}
\end{corollary}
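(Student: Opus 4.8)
The plan is to reduce the entire computation to Corollary~\ref{increasing-simplices}, which identifies the face poset of $\TP_n(\aaa)$ for $\aaa \in (\ZZ_{>0})^n$ with that of the product of simplices $\Delta_1 \times \Delta_2 \times \cdots \times \Delta_{n-1}$. Since the $f$-vector, and therefore the $h$-vector, of a polytope depends only on its face poset, and since $\TP_n(\aaa)$ is simple by Theorem~\ref{simple} (equivalently, a product of simplices is simple, because a vertex $(v,w)$ of a product is incident to $\deg(v) + \deg(w)$ edges, which for simplices equals $\dim \Delta_1 + \cdots + \dim \Delta_{n-1} = \dim$ of the product), it suffices to compute the $h$-polynomial of $\Delta_1 \times \cdots \times \Delta_{n-1}$.

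First I would record that the $f$-polynomial is multiplicative under Cartesian products. Writing $F_P(x) = \sum_{i \geq 0} f_i x^i$ for the generating function of the nonempty faces, every nonempty face of a product $P \times Q$ is uniquely a product $F \times G$ of nonempty faces $F \subseteq P$ and $G \subseteq Q$, with $\dim(F \times G) = \dim F + \dim G$. This gives the convolution $f_k(P \times Q) = \sum_{i+j=k} f_i(P)\,f_j(Q)$, hence $F_{P \times Q}(x) = F_P(x)\,F_Q(x)$. By the paper's definition of the $h$-polynomial, $\sum_i h_i x^i = \sum_i f_i (x-1)^i = F_P(x-1)$, so the $h$-polynomial is just $F_P$ evaluated at $x-1$. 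Because $x \mapsto x-1$ is a ring homomorphism on polynomials, the $h$-polynomial is multiplicative as well, and the $h$-polynomial of a product of simple polytopes is the product of their $h$-polynomials.

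Next I would compute the $h$-polynomial of a single simplex $\Delta_d$. Here $f_i(\Delta_d) = \binom{d+1}{i+1}$ for $0 \leq i \leq d$, so
\[
H_{\Delta_d}(x) = \sum_{i=0}^d \binom{d+1}{i+1}(x-1)^i = \frac{1}{x-1}\left( (1 + (x-1))^{d+1} - 1 \right) = \frac{x^{d+1}-1}{x-1} = 1 + x + \cdots + x^d.
\]
Multiplying over the factors of the product, the $h$-polynomial of $\Delta_1 \times \cdots \times \Delta_{n-1}$ is
\[
\prod_{d=1}^{n-1}(1 + x + \cdots + x^d) = \prod_{i=2}^{n}(1 + x + \cdots + x^{i-1}) = [n]!_x,
\]
the final equality absorbing the trivial factor $[1]_x = 1$. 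By the reduction above, this is also the $h$-polynomial of $\TP_n(\aaa)$, which completes the argument.

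There is no serious obstacle once Corollary~\ref{increasing-simplices} is available; the proof is essentially a bookkeeping exercise. The only points requiring care are maintaining the nonempty-face convention in the multiplicativity step (so that the empty face is excluded and dimensions add cleanly under products), and confirming that the paper's normalization $\sum_i f_i (x-1)^i = \sum_i h_i x^i$ is precisely the substitution $x \mapsto x-1$ that converts multiplicativity of $F_P$ into multiplicativity of the $h$-polynomial. Both are routine.
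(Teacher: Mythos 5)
Your proof is correct and follows essentially the same route as the paper's first proof of Corollary~\ref{mahonian}: reduce to $\Delta_1 \times \cdots \times \Delta_{n-1}$ via Corollary~\ref{increasing-simplices}, establish multiplicativity of the $h$-polynomial under Cartesian products through the $f$-vector convolution, and compute the $h$-polynomial of $\Delta_d$ to be $1 + x + \cdots + x^d$. The paper also offers a second, independent proof via a generic linear form counting outdegrees of permutation Tesler matrices in the oriented $1$-skeleton, but your argument matches the first one.
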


\begin{proof}
We give two proofs of this result, one relying on Corollary~\ref{increasing-simplices} and one relying on generic linear forms.

{\it First proof:}  Let $P$ and $Q$ be arbitrary simple polytopes and let $P \times Q$ be their 
Cartesian product.  The polytope $P \times Q$ is simple and the $h$-polynomial of $P \times Q$ is the 
product of the $h$-polynomials of $P$ and $Q$.  
To see this,  observe that a typical $i$-dimensional face of
$P \times Q$ is given by the product of an $j$-dimensional face of $P$ and a $i-j$-dimensional face
of $Q$, for some $0 \leq j \leq i$.  Therefore,
 the $f$-vectors $f(P) = (f_0(P), f_1(P), \dots )$ and $f(Q) = (f_0(Q), f_1(Q), \dots)$
are related to the $f$-vector of the product $f(P \times Q)$ by 
$f_i(P \times Q) = \sum_{j = 0}^i f_i(P) f_{i-j}(Q)$.
The $h$-polynomials are therefore related by:
\begin{align*}
\sum_{i = 0}^{\dim(P) + \dim(Q)} h_i(P \times Q) x^i &= \sum_{i = 0}^{\dim(P) + \dim(Q)} f_i(P \times Q) (x-1)^i \\
&= \sum_{i = 0}^{\dim(P) + \dim(Q)} \left( \sum_{j = 0}^i f_j(P) (x-1)^j f_{i-j}(Q) (x-1)^{i-j} \right) \\
&= \left( \sum_{i = 0}^{\dim(P)} f_i(P) (x-1)^i \right) \left( \sum_{j = 0}^{\dim(Q)} f_j(Q) (x-1)^j \right),
\end{align*}
which equals the product of the $h$-polynomials of $P$ and $Q$. This multiplicative property of $h$-polynomials is surely well known, but the authors could not find a reference.

It remains to observe that the $h$-polynomial of the $d$-dimensional simplex $\Delta_d$ is given by
$\sum_{i = 0}^d h_i(\Delta_d) x^i = \sum_{i = 0}^d {d+1 \choose i+1}(x - 1)^i = 1 + x + \cdots + x^d$,
where we used the fact that $\Delta_d$ has ${d + 1 \choose i + 1}$ faces of dimension $i$.

{\it Second proof:}  Let $\lambda$ be any generic linear form on the vector space spanned by $\TP_n({\bf 1})$.
Then $\lambda$ induces an orientation on the $1$-skeleton of $\TP_n(\aaa)$ by requiring that the 
value of $\lambda$ increase along each oriented edge.  
It follows (see for example \cite[\textsection 8.3]{Ziegler})
that the $h$-vector entry $h_i(\TP_n(\aaa))$ equals the number of vertices in this oriented $1$-skeleton 
with outdegree $i$.

By Theorem~\ref{face-poset-characterization}, 
 the vertices of $\TP_n(\aaa)$ are the permutation Tesler matrices of size $n$ and
the edges of $\TP_n(\aaa)$ emanating from a fixed vertex correspond to changing the 
support of the corresponding permutation Tesler matrix of the vertex in exactly two positions 
belonging the the same row.
Let $\lambda$ be any linear form such that moving from one to another permutation Tesler matrix by shifting the support to the right in a single row corresponds to an increase in $\lambda$.
Then if the support of a permutation Tesler matrix is given by $\{ (i, b_i) \,:\, 1 \leq i \leq n \}$, its outdegree
in the orientation induced by $\lambda$ is $\sum_{i = 1}^n (n - b_i)$.  The corresponding generating 
function for outdegree is 
$\sum_{i = 0}^{{n \choose 2}} h_i(\TP_n(\aaa)) x^i = \prod_{i = 1}^n
\left(\sum_{a_i = i}^n x^{n - b_i} \right) = [n]!_x$.
\end{proof}

Corollaries~\ref{increasing-simplices} and \ref{mahonian} are also true for Tesler polytopes $\TP_n(\aaa)$, where
$\epsilon(a) = +^{n-1} 0$.
In light of Theorem~\ref{simple}, it is natural to ask for an analog to these results when 
$\epsilon(a)$ is of the form $+0+^{n-2}$ or $+0+^{n-3}0$.  Such an analog is provided by the following corollary.

\begin{corollary}
\label{other-simple-case}
Let $\aaa \in (\ZZ_{\geq 0})^n$ and assume that $\epsilon(\aaa)$ has one of the forms 
$+0+^{n-2}$ or $+0+^{n-3}0$.  Let $P$ be the quotient polytope $(\Delta_{n-2} \times \Delta_{n-1})/\sim$, where we declare
$(p, q) \sim (p', q)$ whenever $q \in \Delta_{n-1}$ belongs to the facet of $\Delta_{n-1}$ defined by $x_2 = 0$ and 
$p, p' \in \Delta_{n-2}$.

The face poset of the polytope $\TP_n(\aaa)$ is isomorphic to the face poset of the Cartesian product
$\Delta_1 \times \Delta_2 \times \cdots \Delta_{n-3} \times P$.  Moreover, we have that
$\TP_n(\aaa)$ has $2(n-1)!$ vertices and  $h$-polynomial $(1+x^{n-1})[n-1]!_x$.
\end{corollary}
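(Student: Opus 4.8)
The plan is to reduce to one signature, decouple the face poset across the rows of $\mathrm{rstc}_n$, and recognize each block as a simplex or as the quotient $P$. Subtracting $a_n$ from the $(n,n)$-entry gives the affine isomorphism of Theorem~\ref{simple} (valid when $\epsilon_n = +$), so $\TP_n(\aaa)$ for $\epsilon(\aaa) = +0+^{n-2}$ and for $\epsilon(\aaa) = +0+^{n-3}0$ are affinely isomorphic; it therefore suffices to treat $\epsilon(\aaa) = +0+^{n-2}$, which is the more convenient of the two because the last row is then forced to be the single box $(n,n)$. By Theorem~\ref{face-poset-characterization} the face poset of $\TP_n(\aaa)$ is the poset of $\aaa$-Tesler tableaux ordered by $\leq$, with $\dim F = \dim \phi(F)$.

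First I would check that the defining conditions decouple across rows. Each row with positive hook sum (rows $1$ and $3, \dots, n$) must be nonzero; a row $i \geq 3$ occupies only columns $\geq 3$, so conditions (2) and (3) never link it to another row, and it may be filled by any nonempty subset of its $n-i+1$ cells. The sole coupling is between rows $1$ and $2$ through the cell $(1,2)$: since $a_2 = 0$, conditions (2) and (3) together say that row $2$ is nonzero if and only if $T(1,2) = 1$. Hence the set of $\aaa$-Tesler tableaux is a direct product of independent block choices --- one block $\{1,2\}$ together with one block for each row $i = 3, \dots, n$ --- and because $\leq$ is componentwise, the face poset is the product of the block posets.

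Next I would identify the blocks. For $3 \leq i \leq n$ the $i$-th block poset is the poset of nonempty subsets of an $(n-i+1)$-element set, i.e. the face poset of $\Delta_{n-i}$; their product is the face poset of $\Delta_{n-3} \times \cdots \times \Delta_1 \times \Delta_0 \cong \Delta_1 \times \cdots \times \Delta_{n-3}$ (the row-$n$ factor $\Delta_0$ is a point). For the $\{1,2\}$ block I would set up the correspondence with $P$ explicitly: a choice in row $1$ is a face $G$ of $\Delta_{n-1}$, with cell $(1,2)$ playing the role of the vertex $x_2$; if $(1,2) \in G$ then a choice in row $2$ is a face $F$ of $\Delta_{n-2}$, while if $(1,2) \notin G$ then row $2$ is forced to vanish. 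Matching this against the faces of $(\Delta_{n-2} \times \Delta_{n-1})/\sim$, in which $F \times G$ survives when $x_2 \in G$ (dimension $\dim F + \dim G$) and collapses onto a copy of $G$ when $x_2 \notin G$ (dimension $\dim G$), shows that the $\{1,2\}$ block poset is exactly the face poset of $P$. The isomorphism with the face poset of $\Delta_1 \times \cdots \times \Delta_{n-3} \times P$ then follows from the product rule $\mathcal{L}(Q_1 \times Q_2) \cong \mathcal{L}(Q_1) \times \mathcal{L}(Q_2)$ for face posets. I expect this matching against a quotient polytope --- deciding which faces of the product of simplices survive and tracking the dimensions --- to be the main obstacle; everything else is bookkeeping.

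Finally I would read off the numerics. The zero-dimensional tableaux number $2(n-1)$ in the $\{1,2\}$ block (either $(1,2)$ is unused, row $2$ is empty and row $1$ carries its $1$ in one of the remaining $n-1$ columns, or $(1,2)$ is used and row $2$ carries its $1$ in one of $n-1$ columns) and $n-i+1$ in each row $i \geq 3$, giving $2(n-1) \cdot (n-2)! = 2(n-1)!$ vertices. Since $\TP_n(\aaa)$ is simple by Theorem~\ref{simple} and the $h$-polynomial is multiplicative over Cartesian products (first proof of Corollary~\ref{mahonian}), the simplex factors contribute $\prod_{i=3}^{n}(1 + x + \cdots + x^{n-i}) = [n-2]!_x$. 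Moreover every subset of the $2(n-1) = (2n-3)+1$ vertices of the $\{1,2\}$ block spans a face, so that block --- equivalently $P$ --- is combinatorially the simplex $\Delta_{2n-3}$ and has $h$-polynomial $1 + x + \cdots + x^{2n-3} = (1+x^{n-1})(1 + x + \cdots + x^{n-2})$. Multiplying and using $[n-2]!_x \cdot (1 + x + \cdots + x^{n-2}) = [n-1]!_x$ produces the $h$-polynomial $(1+x^{n-1})[n-1]!_x$, as claimed.
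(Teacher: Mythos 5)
Your proof is correct, and its skeleton is the same as the paper's own (sketched) proof: replace faces by $\aaa$-Tesler tableaux via Theorem~\ref{face-poset-characterization}, note that for the signature $+0+^{n-2}$ the tableau conditions decouple row by row except for the single coupling of rows $1$ and $2$ through the cell $(1,2)$, identify the blocks of rows $3,\dots,n$ with face posets of simplices and the $\{1,2\}$-block with the face poset of $P$, and get the numerics from the multiplicativity of $h$-polynomials in the first proof of Corollary~\ref{mahonian}. Where you genuinely add something is the treatment of $P$: the paper merely asserts that $P$ has $2(n-1)$ vertices and $h$-polynomial $(1+x^{n-1})[n-1]_x$, whereas you observe that in the $\{1,2\}$-block every nonempty subset of the $2(n-1)$ vertices is the vertex set of a unique block element, of dimension one less than its cardinality, so the block poset is a Boolean lattice and $P$ is combinatorially the simplex $\Delta_{2n-3}$. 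This simultaneously proves the paper's unproved assertion (since $(1+x^{n-1})[n-1]_x = 1+x+\cdots+x^{2n-3}$ is exactly the $h$-polynomial of $\Delta_{2n-3}$), shows that the quotient $P$ really is realizable as a polytope, and yields the sharper conclusion that $\TP_n(\aaa)$ is combinatorially $\Delta_1\times\cdots\times\Delta_{n-3}\times\Delta_{2n-3}$; your explicit reduction between the two signatures via the affine isomorphism from Theorem~\ref{simple} also repairs a small imprecision in the paper's sketch, whose claim that ``all other rows are nonzero'' is literally true only for $+0+^{n-2}$. One detail to make explicit in a final write-up: the faces of $P$ should be defined as images of faces of $\Delta_{n-2}\times\Delta_{n-1}$ under the quotient map, and when $x_2$ is not identically zero on $G$ the image of $F\times G$ is itself a quotient of the same kind rather than literally $F\times G$ --- but it still has dimension $\dim F+\dim G$ and the containment order among images is componentwise, which is all your matching uses.
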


\begin{proof} (Sketch.)
The second row of any $\aaa$-Tesler tableau $T$ is nonzero if and only if $T(1,2) = 1$.  All other rows of any $\aaa$-Tesler tableau
are nonzero.  By Theorem~\ref{face-poset-characterization}, we get the claimed Cartesian product decomposition of 
$\TP_n(\aaa)$.  The fact that $\TP_n(\aaa)$ has $2 (n-1)!$ vertices arises from the fact that the quotient polytope
$P$ has $2(n-1)$ vertices.  The fact that $\TP_n(\aaa)$ has $h$-polynomial $(1+x^{n-1})[n-1]!_x$ can be deduced from the 
multiplicative property of $h$-polynomials of the first proof of Corollary~\ref{mahonian} and the fact that
$P$ has $h$-polynomial $(1+x^{n-1})[n-1]_x$.
\end{proof}

\begin{remark}
All of the results of this section
 are still true when one considers the ``generalized" Tesler polytopes
 polytopes $\TP_n(\aaa)$ defined for real vectors $\aaa$; one simply replaces $(\ZZ_{\geq 0})^n$ and 
 $(\ZZ_{> 0})^n$ with $(\mathbb{R}_{\geq 0})^n$ and $(\mathbb{R}_{> 0})^n$ throughout.  The proofs are identical.
\end{remark}

\begin{remark}
When $\aaa \in (\ZZ_{> 0})^n$ is a vector of positive integers, Theorem~\ref{face-poset-characterization}
can be deduced from results of Hille \cite{hill}.  In particular, if $Q$ denotes the quiver on the vertex
set $Q_0 = [n+1]$ with arrows $i \rightarrow j$ for all $1 \leq i < j \leq n+1$ and if $\theta: Q_0 \rightarrow \mathbb{R}$ denotes 
the weight function defined by $\theta(i) = a_i$ for $1 \leq i \leq n$ and $\theta(n+1) = -a_1 - \cdots - a_n$, then 
the Tesler polytope $\TP_n(\aaa)$ is precisely the polytope $\Delta(\theta)$ considered in 
\cite[Theorem 2.2]{hill}.  By the argument in the last paragraph of \cite[Theorem 2.2]{hill} and \cite[Proposition 2.3]{hill},
the genericity condition on $\theta$ in the hypotheses of \cite[Theorem 2.2]{hill} is equivalent to every entry of $\aaa$ being positive.
The conclusion of \cite[Theorem 2.2]{hill} is essentially the same as the special case of Theorem~\ref{face-poset-characterization}
when $\aaa \in (\ZZ_{> 0})^n$.  When some entries of $\aaa$ are zero, in the terminology of \cite{hill} the weight function
$\theta$ lies on a wall, and the results of \cite{hill} do not apply to $\TP_n(\aaa)$.
\end{remark}

\begin{remark}
When $\aaa \in (\ZZ_{> 0})^n$ is a vector of positive integers, the simplicity of $\TP_n(\aaa)$ guaranteed by Theorem~\ref{simple}  
had been observed previously in the context of flow polytopes.   
The condition that every entry in $\aaa$ is positive is equivalent to $\aaa$ lying in the ``nice chamber" defined by Baldoni and Vergne
in \cite[p. 458]{BV2}. In
\cite[p. 798]{BrionV}, Brion and Vergne observe that this condition on $\aaa$ implies the simplicity of 
$\TP_n(\aaa)$.  The simplicity of $\TP_n(\aaa)$ in this case can also be derived from Hille's characterization of the face poset
\cite{hill} using exactly the same argument as in the proof of Theorem~\ref{simple}.
\end{remark}

\section{Volume of the Tesler polytope $\TP_n({\bf 1)}$}
\label{volume-section}

The aim of this section is to prove Theorem \ref{thm:vol} through a
sequence of results. For ease of reading the section is broken down
into several subsections. We start by stating previous results on volumes and Ehrhart polynomials of flow polytopes and then prove specific lemmas regarding $\TP_n({\bf 1})$.

In this section we work in the field of {\em iterated formal Laurent series}
with $m$ variables as discussed by Haglund, Garsia and Xin in \cite[\S 4]{GHX}. We choose a total order of the variables: $x_1,
x_2,\ldots, x_{m}$ to extract {\em iteratively}  coefficients, constant coefficients,
and residues of an element $f({\bf x})$ in this field. We
denote these respectively by  
\[
\CTn{m} f, \quad [{\bf x}^{\bf a}] := [x_m^{a_m}\cdots x_1^{a_1}] f, \quad   \Res_{x_m}
\cdots \Res_{x_1} f.
\]
For more on these iterative coefficient extractions see \cite[\S
2]{GX}.

\subsection{Generating function of $K_{A_n}({\bf a'})$ and the Lidskii formulas}

Recall that by Lemmas~\ref{lem:flow} and \ref{lem:tes} we have that the normalized
volume $\vol \TP_n({\bf
  a})$ equals the normalized volume $\vol \FP_n({\bf a})$ and
that the number $T_n({\bf a})$ of Tesler matrices is given by the
Kostant partition function $K_{A_n}({\bf a'})$.  By definition, the
latter is given by the following iterated coefficient extraction.
\begin{equation} \label{gs:kostant}
K_{A_n}({\bf a'})= [{\bf x}^{\bf a'}]
\prod_{1\leq i<j \leq n+1} (1-x_ix_j^{-1})^{-1}.
\end{equation}

In addition, the Kostant partition function is invariant under
reversing the order and sign of the netflow vector.

\begin{proposition} \label{prop:revflow}
\[
K_{A_n}(a_1,a_2,\ldots,a_n,-\sum_{i=1}^n a_i) = K_{A_n}(\sum_{i=1}^n
a_i, -a_n,\ldots,-a_2,-a_1).
\]
\end{proposition}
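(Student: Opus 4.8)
The plan is to exhibit a single linear involution of the weight lattice that permutes the positive roots of type $A_n$ and carries the first netflow vector to the second; a bijection on root decompositions then yields the identity directly, with no need to manipulate the constant-term expression \eqref{gs:kostant}.

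Concretely, I would define $\iota \colon \RR^{n+1} \to \RR^{n+1}$ by reversing the order of the coordinates and negating them, that is, $\iota(v_1, \dots, v_{n+1})_k = -v_{n+2-k}$. First I would check that $\iota$ acts on the standard basis by $\iota(\e_i) = -\e_{n+2-i}$, so that on a positive root $\e_i - \e_j$ with $i < j$ one has $\iota(\e_i - \e_j) = \e_{n+2-j} - \e_{n+2-i}$. Since $i < j$ forces $n+2-j < n+2-i$, this image is again a positive root $\e_{i'} - \e_{j'}$ with $i' = n+2-j < j' = n+2-i$. Hence $\iota$ restricts to a bijection of the set of positive roots $\{\e_i - \e_j : 1 \le i < j \le n+1\}$ onto itself.

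With this in hand the counting statement is immediate. If $\mathbf{b} = \sum_{\alpha} c_\alpha\, \alpha$ is an $\NN$-combination of positive roots, then applying the linear map $\iota$ gives $\iota(\mathbf{b}) = \sum_\alpha c_\alpha\, \iota(\alpha)$, again an $\NN$-combination of positive roots with the same multiset of coefficients, merely reindexed by the permutation $\alpha \mapsto \iota(\alpha)$. This sets up a bijection between the decompositions of $\mathbf{b}$ and of $\iota(\mathbf{b})$, so that $K_{A_n}(\mathbf{b}) = K_{A_n}(\iota(\mathbf{b}))$. It then remains to evaluate $\iota$ on the specific netflow vector $\mathbf{a}' = (a_1, \dots, a_n, -\sum_{i=1}^n a_i)$: reading off coordinates gives $\iota(\mathbf{a}') = (\sum_{i=1}^n a_i, -a_n, \dots, -a_2, -a_1)$, which is exactly the right-hand side.

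I do not expect a genuine obstacle here: the only subtlety is bookkeeping, namely confirming that the reverse-and-negate map sends positive roots to positive roots (the negation is precisely what corrects the order reversal) and that the coefficient reindexing is a bijection rather than merely an injection. As an alternative I could argue entirely within the iterated Laurent series field by substituting $x_i \mapsto x_{n+2-i}^{-1}$ into the generating function $\prod_{1 \le i < j \le n+1}(1 - x_i x_j^{-1})^{-1}$ of \eqref{gs:kostant}, which leaves the product invariant up to relabeling its factors, and then tracking the effect on the coefficient extraction $[\mathbf{x}^{\mathbf{b}}]$; however, the combinatorial involution is cleaner and avoids any concern about the order in which constant terms are taken.
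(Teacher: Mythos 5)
Your proof is correct and is essentially the paper's own argument: the paper proves this by reversing an integer flow on the complete graph, and under the standard dictionary between edges $(i,j)$ of $k_{n+1}$ and positive roots $\e_i-\e_j$ (with integer flows corresponding to $\NN$-combinations), your reverse-and-negate involution $\iota$ is exactly that flow-reversal bijection, just written out in coordinates. The extra bookkeeping you supply (that $\iota$ permutes the positive roots and sends $\mathbf{a}'$ to the stated vector) is precisely what the paper leaves implicit.
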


\begin{proof}
Reversing an (integer) flow on the complete graph $k_n$ gives an
involution between (integer) flows with netflow
$(a_1,a_2,\ldots,a_n,-\sum_{i=1}^n a_i)$ and (integer) flows with
netflow $(\sum_{i=1}^n a_i,-a_n,\ldots,-a_2,-a_1)$.
\end{proof}

Assume that ${\bf a}=(a_1,a_2,\ldots,a_n)$ satisfies $a_i \geq  0$ for
$i=1,\ldots,n$. Then the {\bf Lidskii formulas} \cite[Proposition 34, Theorem 37]{BV2} state that

\begin{equation} \label{eq:vol}
\vol \FP_{n}({\bf a}) = \sum_{{\bf i}}
\binom{\binom{n}{2}}{i_1,i_2,\ldots,i_n} a_1^{i_1}\cdots
a_n^{i_n}\cdot K_{A_{n-1}}(i_1-n+1, i_2-n+2,\ldots,i_n),
\end{equation}
and 
\begin{equation} \label{eq:kost}
K_{A_n}({\bf a'}) = \sum_{\mathclap{{\bf i}}}
\binom{a_1+n-1}{i_1}\binom{a_2+n-2}{i_2} \cdots
\binom{a_{n}}{i_{n}} \cdot  K_{A_{n-1}}(i_1-n+1, i_2-n+2,\ldots,i_n),
\end{equation}
where both sums are over weak compositions ${\bf
  i}=(i_1,i_2,\ldots,i_n)$ of $\binom{n}{2}$ with $n$ parts which we
denote as ${\bf i} \models \binom{n}{2}$,  $\ell({\bf i})=n$.

\begin{example}
The Tesler polytope $\TP_3(1,1,1) \cong \FP_3(1,1,1)$ has
normalized volume $4$ since by \eqref{eq:vol}
\[
\vol \FP_3(1,1,1)=\binom{3}{3,0,0} K_{A_2}(1,-1,0) + \binom{3}{2,1,0} K_{A_2}(0,0,0) +
0 = 1\cdot 1 + 3\cdot 1 = 4.
\]
And this polytope has $T_3(1,1,1)=K_{A_3}(1,1,1,-3)=7$ lattice points
(the seven $3\times 3$
Tesler matrices with hook sums $(1,1,1)$; see Figure~\ref{fig:teslerpolygraph}). Indeed by
\eqref{eq:kost}
\[
K_{A_3}(1,1,1,-3) = \binom{1+2}{3}\binom{1+1}{0} K_{A_2}(1,-1,0) +
\binom{1+2}{2}\binom{1+1}{1}K_{A_2}(0,0,0) = 7.
\]
\end{example}

\begin{example} \cite{BV2}
If one uses \eqref{eq:vol} on the 
Chan-Robbins-Yuen
polytope $\TP_n(\e_1)$ one
obtains 
\[
\vol \TP_n(1,0,\ldots,0) = K_{A_{n-1}}(-{\textstyle \binom{n-1}{2}},-n+2,\ldots,-1,0),
\]
since the only composition ${\bf i}$ that does not vanish is
$i_1=\binom{n}{2}, i_2=0,\ldots,i_n=0$. By Proposition~\ref{prop:revflow} this is equivalent to the
first identity in Example~\ref{ex:cry}. 
\end{example}

\subsection{Volume of $\TP_n({\bf 1})$ as a constant term}

In this short section we use \eqref{eq:vol} and the generating series \eqref{gs:kostant}
of Kostant partition functions to write the volume of $\TP_n({\bf 1})$ as an iterated
constant term of a formal Laurent series.

\begin{lemma} \label{lemma:vol_to_CT}
\begin{equation} \label{eq:vol_to_CT}
\vol\TP_n({\bf 1}) = \CTn{n}\,\, (x_1+\cdots + x_n)^{\binom{n}{2}} \prod_{1\leq i<j\leq n}
(x_j-x_i)^{-1},
\end{equation}
where $\CTn{n} f$ denotes the iterated constant term of $f$.
\end{lemma}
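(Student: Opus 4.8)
The plan is to start from the Lidskii formula \eqref{eq:vol} specialized to the all-ones hook sum vector $\aaa = \mathbf{1}$, and then repackage the resulting sum over compositions as a single iterated constant-term extraction. Setting $a_1 = \cdots = a_n = 1$ in \eqref{eq:vol} collapses the monomial $a_1^{i_1}\cdots a_n^{i_n}$ to $1$, so that
\begin{equation*}
\vol \FP_n(\mathbf{1}) = \sum_{{\bf i} \models \binom{n}{2},\, \ell({\bf i}) = n} \binom{\binom{n}{2}}{i_1, \ldots, i_n} K_{A_{n-1}}(i_1 - n + 1, i_2 - n + 2, \ldots, i_n).
\end{equation*}
The first key observation is that the multinomial coefficient is exactly the coefficient extraction data attached to expanding $(x_1 + \cdots + x_n)^{\binom{n}{2}}$: the term $x_1^{i_1}\cdots x_n^{i_n}$ appears with coefficient $\binom{\binom{n}{2}}{i_1,\ldots,i_n}$, and the constraint that ${\bf i}$ be a composition of $\binom{n}{2}$ is automatically enforced by the total degree of the power.

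The second and central step is to recognize the Kostant partition function $K_{A_{n-1}}$ as the constant term of the generating series \eqref{gs:kostant}. The plan is to rewrite $K_{A_{n-1}}(i_1 - n + 1, \ldots, i_n)$ as the coefficient of $x_1^{i_1 - n + 1} x_2^{i_2 - n + 2} \cdots x_n^{i_n}$ in $\prod_{1 \leq i < j \leq n}(1 - x_i x_j^{-1})^{-1}$, using \eqref{gs:kostant} with $n$ replaced by $n-1$ (so the product runs over $1 \leq i < j \leq n$). Multiplying this product by the appropriate monomial $x_1^{n-1} x_2^{n-2} \cdots x_n^0$ shifts the exponents so that extracting the constant term of the shifted series against $(x_1 + \cdots + x_n)^{\binom{n}{2}}$ reproduces each summand above. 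The shift $\prod_j x_j^{n-j}$ combined with $\prod_{i<j}(1 - x_i x_j^{-1})^{-1}$ should simplify to $\prod_{i<j}(x_j - x_i)^{-1}$ after clearing denominators: indeed $x_j(1 - x_i x_j^{-1}) = x_j - x_i$, and tracking the total number of factors $\binom{n}{2}$ against the monomial shift $\sum_j (n-j) = \binom{n}{2}$ makes the bookkeeping consistent. Matching these two exponent counts is the crux, and I would verify it by carefully comparing the power of each $x_j$ contributed by the shift monomial against the number of factors $(x_j - x_i)$ in which $x_j$ appears.

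Once the product of shifted factors is identified with $\prod_{i<j}(x_j - x_i)^{-1}$, the entire sum over compositions becomes the iterated constant term
\begin{equation*}
\vol \TP_n(\mathbf{1}) = \CTn{n}\, (x_1 + \cdots + x_n)^{\binom{n}{2}} \prod_{1 \leq i < j \leq n}(x_j - x_i)^{-1},
\end{equation*}
where the constant-term operator picks out precisely the terms in which the shifted exponents vanish, i.e.\ the terms matching the Lidskii summands. The main obstacle I anticipate is handling the iterated Laurent-series expansion correctly: the expression $\prod_{i<j}(x_j - x_i)^{-1}$ is not a single power series but must be expanded in the field of iterated formal Laurent series with respect to the chosen total order $x_1, \ldots, x_n$, and one must ensure that each factor $(x_j - x_i)^{-1}$ is expanded as a geometric series in the correct variable (the larger-indexed one) so that the constant-term extraction \CTn{n} agrees term-by-term with the Kostant partition function reading of \eqref{gs:kostant}. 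Getting the expansion convention consistent between the two sides—so that negative-exponent contributions are treated identically—is the delicate point, and I would resolve it by appealing to the iterated-coefficient framework of \cite[\S 4]{GHX} and the order-of-extraction conventions fixed at the start of this section.
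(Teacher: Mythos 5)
Your outline follows the same route as the paper's proof (Lidskii formula \eqref{eq:vol} at ${\bf a}={\bf 1}$, the Kostant generating series \eqref{gs:kostant}, then the multinomial theorem), but the step you yourself flag as the crux is exactly where the argument breaks, and the bookkeeping does not work as you claim. Since $(x_j-x_i)^{-1}=x_j^{-1}(1-x_ix_j^{-1})^{-1}$, converting the product requires an inverse power of the \emph{larger}-indexed variable in each factor, and $x_j$ is the larger variable in exactly $j-1$ factors, so
\[
\prod_{1\leq i<j\leq n}(x_j-x_i)^{-1}\;=\;x_1^{0}x_2^{-1}x_3^{-2}\cdots x_n^{-(n-1)}\prod_{1\leq i<j\leq n}(1-x_ix_j^{-1})^{-1}.
\]
Your proposed shift monomial $x_1^{n-1}x_2^{n-2}\cdots x_n^{0}$ differs from the required one by the factor $(x_1\cdots x_n)^{n-1}$; matching the total count $\binom{n}{2}$ of factors is not enough (the two monomials even have total degrees of opposite sign). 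Concretely, unpacking the right-hand side of \eqref{eq:vol_to_CT} with the expansion convention of \eqref{gs:kostant} yields $\sum_{\bf i}\binom{\binom{n}{2}}{i_1,\ldots,i_n}K_{A_{n-1}}\bigl(-i_1,\,1-i_2,\,\ldots,\,(n-1)-i_n\bigr)$, whereas Lidskii yields $\sum_{\bf i}\binom{\binom{n}{2}}{i_1,\ldots,i_n}K_{A_{n-1}}\bigl(i_1-(n-1),\,i_2-(n-2),\,\ldots,\,i_n\bigr)$: for a fixed composition ${\bf i}$ these summands are evaluations of $K_{A_{n-1}}$ at different vectors, so no term-by-term identification is possible as written.

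The missing ingredient is Proposition~\ref{prop:revflow}, the flow-reversal symmetry $K_{A_{n-1}}(b_1,\ldots,b_n)=K_{A_{n-1}}(-b_n,\ldots,-b_1)$, which your proposal never invokes. Applying it to the Lidskii summand turns $K_{A_{n-1}}({\bf i}-(n-1,n-2,\ldots,0))$ into $K_{A_{n-1}}((0,1,\ldots,n-1)-{\bf i}^{\mathrm{rev}})$, and re-indexing the sum by ${\bf i}\mapsto{\bf i}^{\mathrm{rev}}$ (legitimate because the multinomial coefficient is symmetric) produces exactly the exponent vectors $(0,1,\ldots,n-1)-{\bf i}$ that the constant-term side demands; only then does the shift become the correct $x_2^{-1}\cdots x_n^{-(n-1)}$, after which ${\bf x}^{\bf i}$ carries nonnegative exponents and the multinomial theorem assembles $(x_1+\cdots+x_n)^{\binom{n}{2}}$. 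This cannot be absorbed into a careful choice of iterated-Laurent expansion conventions, since the discrepancy between the two composition sums above is precisely a reversal-and-negation of the argument of $K_{A_{n-1}}$; some form of that symmetry must be proved (the paper does it by the one-line flow-reversal involution). With that single insertion, your argument becomes the paper's proof.
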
 

\begin{proof} 
By \eqref{eq:vol} and  Proposition~\ref{prop:revflow} we have that
\begin{align*}
\vol\TP_n({\bf 1})  &= \sum_{{\bf i} \models \binom{n}{2},
  \ell({\bf i})=n} \binom{\binom{n}{2}}{i_1,i_2,\ldots,i_{n}}
\cdot K_{A_{n-1}}(i_1-n+1, i_2-n+2,\ldots,i_n)\\
&= \sum_{{\bf i} \models \binom{n}{2},
  \ell({\bf i})=n} \binom{\binom{n}{2}}{i_1,i_2,\ldots,i_{n}}
\cdot K_{A_{n-1}}(-i_n,1-i_{n-1},2-i_{n-2},\ldots,n-1-i_{1}).
\end{align*}
We use \eqref{gs:kostant} to rewrite this as 
\[
\vol\TP_n({\bf 1}) 
= \sum_{{\bf i} \models \binom{n}{2},
  \ell({\bf i})=n} 
\binom{\binom{n}{2}}{i_1,i_2,\ldots,i_{n}}  [{\bf
  x}^{\bf \delta_n - {\bf i}}] \prod_{1\leq i<j\leq n}
(1-x_ix_j^{-1})^{-1},
\]
where $\delta_n = (0,1,2\ldots,n-1)$. Since $[{\bf x}^{\bf a}]
f = \CTn{n} {\bf x}^{-{\bf a}} f$ then
\[
\vol\TP_n({\bf 1})  = \CTn{n} \sum_{{\bf i} \models \binom{n}{2},
  \ell({\bf i})=n} {\bf x}^{{\bf i}-\delta_n}
\binom{\binom{n}{2}}{i_1,i_2,\ldots,i_{n}} 
\prod_{1\leq i<j\leq n}
(1-x_ix_j^{-1})^{-1}.
\]
Using $\displaystyle \prod_{1\leq i<j\leq n}
(1-x_ix_j^{-1})^{-1} = {\bf x}^{\delta_n} \prod_{1\leq i<j\leq n}
(x_j-x_i)^{-1}$ we get
\[
\vol\TP_n({\bf 1}) = \CTn{n} \prod_{1\leq i<j\leq n}
(x_j-x_i)^{-1} \sum_{{\bf i} \models \binom{n}{2},
  \ell({\bf i})=n} \binom{\binom{n}{2}}{i_1,i_2,\ldots,i_{n}} {\bf
  x}^{\bf i}.
\]
An application of the multinomial theorem yields the desired result.
\end{proof}

\subsection{A Morris-type constant term identity}

Let $e_k=e_k(x_1,x_2,\ldots,x_n)$
denote the $k\textsuperscript{th}$ elementary symmetric polynomial. In particular
$e_1=x_1+x_2+\cdots +x_n$. For $n\geq 2$ and nonnegative integers
$a,c$ we define $L_n(a,c)$ to be the following iterated constant term:
\begin{equation} \label{eq:ct}
L_n(a,c) := \CTn{n} \,\, e_1^{(a-1)n+ c\binom{n}{2}}  \prod_{i=1}^{n} x_i^{-a+1} \prod_{1\leq
  i < j \leq n} (x_i-x_j)^{-c}.
\end{equation}

Note that by Lemma~\ref{lemma:vol_to_CT} we have that
\begin{equation} \label{eq:volctid}
\vol \TP_n({\bf 1}) =
L_n(1,1).
\end{equation}

Next we give a product formula for $L_n(a,c)$ that for
$a=c=1$ yields \eqref{eq:volformula}. We postpone the proof to the
next section.

\begin{lemma} \label{main:lemma}
For $n\geq 2$ and nonnegative integers
$a,c$ we have that
\begin{equation}
\label{eq:ctterm}
L_n(a,c) = \big((a-1)n+c{\textstyle\binom{n}{2}}\big)!
\prod_{i=0}^{n-1} \frac{\Gamma(1+c/2)}{\Gamma(1+(i+1)c/2)\Gamma(a+ic/2)},
\end{equation}
where $\Gamma(\cdot)$ is the Gamma function.
\end{lemma}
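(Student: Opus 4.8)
The plan is to prove \eqref{eq:ctterm} by induction on $n$, peeling off a single variable from the iterated constant term at each step. Set $N = (a-1)n + c\binom{n}{2}$, so that the integrand defining $L_n(a,c)$ is homogeneous of degree $0$ and the whole computation is a finite residue calculation in the field of iterated formal Laurent series. Comparing the claimed product at $n$ and at $n-1$ (and using $N - N_{n-1} = (a-1) + c(n-1)$, where $N_{n-1} = (a-1)(n-1) + c\binom{n-1}{2}$), the statement I would aim to establish is the recursion
\begin{equation*}
L_n(a,c) = \frac{N!}{N_{n-1}!}\cdot\frac{\Gamma(1+c/2)}{\Gamma(1+nc/2)\,\Gamma(a+(n-1)c/2)}\cdot L_{n-1}(a,c),
\end{equation*}
together with the base case $n=2$, which reduces by a direct expansion to $L_2(a,c) = \sum_{m\ge 0}\binom{N}{a-1+c+m}\binom{c+m-1}{m}$ and matches the formula.

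To produce this recursion I would extract $\CT_{x_1}$ first. Splitting $e_1 = x_1 + (x_2 + \cdots + x_n)$ and expanding $e_1^{N} = \sum_k \binom{N}{k} x_1^{k}(x_2+\cdots+x_n)^{N-k}$, the only $x_1$-dependent factors are $x_1^{k-a+1}$ and $\prod_{j>1}(x_1 - x_j)^{-c}$, so the extraction amounts to a single residue $\CT_{x_1}\, x_1^{k-a+1}\prod_{j>1}(x_1-x_j)^{-c}$, which is evaluated explicitly from the binomial series for $(1 - x_j/x_1)^{-c}$ under the Garsia--Haglund--Xin expansion conventions (in which $x_1$ is the largest variable). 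Conceptually this is the engine behind the $\Gamma$-factors: under the contour reading $\CT_{x_1}f = \frac{1}{2\pi i}\oint f\,\tfrac{dx_1}{x_1}$, the one-variable building block is Hankel's formula $\frac{1}{2\pi i}\oint e^{x}x^{-a}\,dx = 1/\Gamma(a)$, which explains why the answer is a product of reciprocal Gamma values and why each reduction effects a shift $a \mapsto a + c/2$. After this extraction one is left with a symmetric function of $x_2,\ldots,x_n$ on which the remaining operator $\CT_{x_n}\cdots\CT_{x_2}$ and the weight $\prod_{j>1}x_j^{-a+1}\prod_{2\le i<j}(x_i-x_j)^{-c}$ act.

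The main obstacle is the resummation step: after removing $x_1$ the integrand is no longer literally of the form defining $L_{n-1}$, since the multinomial expansion of $e_1^{N}$ has coupled the surviving variables through a constraint of the shape $\sum_{j>1} m_j = k - a + 1 - c(n-1)$. The crux is to show that summing these contributions against $\binom{N}{k}(x_2+\cdots+x_n)^{N-k}$ collapses back to a single power of $e_1' = x_2 + \cdots + x_n$ times the $L_{n-1}$-integrand, with the falling factorial $N!/N_{n-1}!$ and the Gamma ratio emerging as the combinatorial cost of the collapse; this is where a Chu--Vandermonde-type identity (or an appeal to the lower Morris evaluation) should be needed. A secondary point, easy but worth isolating, is to verify that the half-integer Gamma values occurring when $c$ is odd cancel in pairs across the product $\prod_{i=0}^{n-1}$, so that $L_n(a,c)$ is the expected rational number; this cancellation is already visible in the base case, where the factors $\Gamma(1+c/2)$ cancel.
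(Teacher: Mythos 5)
Your proposal has a genuine gap at exactly the point you flag as ``the crux,'' and it is not a gap that a Chu--Vandermonde identity will fill. After you extract $\CT_{x_1}$ from the expansion $e_1^N=\sum_k\binom{N}{k}x_1^k(x_2+\cdots+x_n)^{N-k}$ together with the geometric-type expansions of $\prod_{j>1}(x_1-x_j)^{-c}$, what remains is a sum over $k$ and over compositions $(m_2,\dots,m_n)$ with $\sum_j m_j=k-(a-1)-c(n-1)$ of constant terms of the form $\CT_{x_n}\cdots\CT_{x_2}\,(x_2+\cdots+x_n)^{N-k}\prod_j x_j^{m_j}\cdot W$, where $W$ is the $L_{n-1}$-weight. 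These are constant terms with \emph{arbitrary monomial insertions}; individually they are not multiples of $L_{n-1}(a,c)$, and the family of such quantities is not closed under any manageable set of relations. This is precisely why the paper (following Xin's simplification of Baldoni--Vergne's proof of the Morris identity) does not peel off a variable: instead it introduces the auxiliary constants $C_n(\ell,a,c)$ whose insertions are \emph{elementary symmetric polynomials} $P_\ell=\ell!(n-\ell)!\,e_\ell$, derives linear relations among them by antisymmetrizing/symmetrizing a derivative identity and using that iterated residues of derivatives vanish, proves these relations determine the constants uniquely, and then checks the Gamma-product satisfies them. A telling structural difference: the paper's relation that reduces the number of variables is $C_n(n-1,1,c)=C_{n-1}(0,c,c)$, i.e.\ when $n$ drops the parameter $a$ turns into $c$. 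Your hoped-for recursion $L_n(a,c)=\frac{N!}{N_{n-1}!}\cdot\frac{\Gamma(1+c/2)}{\Gamma(1+nc/2)\Gamma(a+(n-1)c/2)}\,L_{n-1}(a,c)$ with the \emph{same} parameters $(a,c)$ is of course numerically true once the product formula is known, but no constant-term manipulation is known that produces it directly, and your outline gives no mechanism for the required collapse.

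Two smaller points. First, the base case is not a ``direct expansion'': the sum $\sum_{m\ge 0}\binom{N}{a-1+c+m}\binom{c+m-1}{m}$ is a terminating ${}_2F_1$ evaluated at $-1$, so matching it with $\frac{N!\,\Gamma(1+c/2)}{\Gamma(a)\Gamma(1+c)\Gamma(a+c/2)}$ already requires Kummer's summation theorem---this is where the half-integer Gamma values genuinely enter, and it is of the same nature as (a special case of) the identity you are trying to prove. Second, the Hankel contour heuristic for $1/\Gamma$ is not a valid step in the field of iterated formal Laurent series in which $L_n(a,c)$ is defined; nothing in the formal setting justifies the claimed shift $a\mapsto a+c/2$ per extraction. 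In summary, the strategy as written reduces the lemma to an unproven resummation claim that is essentially as hard as the lemma itself, whereas the paper's route (relations for $C_n(\ell,a,c)$, uniqueness, and verification of the closed form) is the mechanism that makes the induction close.
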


\begin{corollary} \label{cor:case11}
\begin{equation}
L_n(1,1) = \frac{\binom{n}{2}!\cdot  2^{\binom{n}{2}}}{\prod_{i=1}^n
  i!}. \label{eq:case11}
\end{equation}
\end{corollary}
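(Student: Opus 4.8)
The plan is to obtain Corollary~\ref{cor:case11} as the pure specialization $a = c = 1$ of the product formula \eqref{eq:ctterm} in Lemma~\ref{main:lemma}; no new constant-term computation is needed. First I would substitute $a = c = 1$ into \eqref{eq:ctterm}. The factorial prefactor $\big((a-1)n + c\binom{n}{2}\big)!$ immediately collapses to $\binom{n}{2}!$, matching the corresponding factor on the right-hand side of \eqref{eq:case11}. It then remains only to show that the Gamma-function product
\[
\prod_{i=0}^{n-1} \frac{\Gamma(3/2)}{\Gamma(1+(i+1)/2)\,\Gamma(1+i/2)}
\]
equals $2^{\binom{n}{2}}/\prod_{i=1}^n i!$.

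The key observation is that the two Gamma factors in the denominator of the $i$-th term differ by a half-integer shift and can be combined using Legendre's duplication formula $\Gamma(z)\,\Gamma(z+1/2) = 2^{1-2z}\sqrt{\pi}\,\Gamma(2z)$. Taking $z = 1 + i/2$ gives $z + 1/2 = 1 + (i+1)/2$ and $2z = i+2$, so that
\[
\Gamma(1+i/2)\,\Gamma(1+(i+1)/2) = 2^{-1-i}\sqrt{\pi}\,\Gamma(i+2) = 2^{-1-i}\sqrt{\pi}\,(i+1)!.
\]
Since $\Gamma(3/2) = \sqrt{\pi}/2$, the $i$-th factor simplifies to $2^{i}/(i+1)!$, with the $\sqrt{\pi}$ cancelling. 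Multiplying over $i = 0, 1, \dots, n-1$ and using $\sum_{i=0}^{n-1} i = \binom{n}{2}$ together with $\prod_{i=0}^{n-1}(i+1)! = \prod_{i=1}^n i!$ then yields exactly $2^{\binom{n}{2}}/\prod_{i=1}^n i!$, which combined with the prefactor $\binom{n}{2}!$ establishes \eqref{eq:case11}.

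I do not expect a genuine obstacle here: the corollary is a one-line substitution followed by a routine Gamma-function manipulation, and the only step requiring any insight is recognizing that the denominator of each factor is precisely a duplication-formula pair $\Gamma(z)\Gamma(z+1/2)$. As a consistency check, combining this value with \eqref{eq:volctid} and Lemma~\ref{lemma:vol_to_CT} recovers the claimed volume of $\TP_n({\bf 1})$, matching the first line of \eqref{eq:volformula}.
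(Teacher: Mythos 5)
Your proposal is correct and follows exactly the same route as the paper's proof: substitute $a=c=1$ into \eqref{eq:ctterm}, then use $\Gamma(3/2)=\sqrt{\pi}/2$ and the Legendre duplication formula to reduce each factor to $2^i/(i+1)!$. The only difference is that you spell out the duplication-formula step (choosing $z = 1+i/2$) explicitly, which the paper leaves to the reader.
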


\begin{proof}
Set $a=1$ and $c=1$ in \eqref{eq:ctterm} and obtain
\[
L_n(1,1) = \binom{n}{2}! \prod_{i=0}^{n-1} \frac{\Gamma(3/2)}{\Gamma(1+(i+1)/2)\Gamma(1+i/2)},
\]
since $\Gamma(3/2) = \sqrt{\pi}/2$ and by the duplication formula of
$\Gamma(\cdot)$ this becomes
\[
L_n(1,1) = \binom{n}{2}! \prod_{i=0}^{n-1} \frac{2^{i}}{(i+1)!} = \frac{\binom{n}{2}!\cdot  2^{\binom{n}{2}}}{\prod_{i=1}^n
  i!},
\]
as desired. 
\end{proof}

Drew Armstrong (private communication) noted the resemblance of the
product in the  RHS \eqref{eq:case11} with the number of standard Young
tableaux of staircase shape. Indeed, if we let  $f^{(n-1,n-2,\ldots,1)}$ be the number of
standard Young tableaux of shape $(n-1,n-2,\ldots,1)$ which by the
hook-length formula equals
\[
f^{(n-1,n-2,\ldots,1)} = \frac{\binom{n}{2}!}{\prod_{k=1}^{n-1}
                             (2k-1)^{n-k}},
\]
then one can show that $L_n(1,1)$ is divisible by this number. The
ratio of these numbers is a product of consecutive Catalan numbers.

\begin{proposition} \label{prop:sytcase11} 
\begin{equation} 
\frac{\binom{n}{2}!\cdot  2^{\binom{n}{2}}}{\prod_{i=1}^n
  i!} = f^{(n-1,n-2,\ldots,1)} \cdot \prod_{i=1}^{n-1}
Cat(i).
\end{equation}
\end{proposition}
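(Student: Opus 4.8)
The plan is to prove the identity
\[
\frac{\binom{n}{2}!\cdot 2^{\binom{n}{2}}}{\prod_{i=1}^n i!}
= f^{(n-1,n-2,\ldots,1)} \cdot \prod_{i=1}^{n-1} Cat(i)
\]
by reducing it, via the hook-length formula already quoted for the staircase, to a purely elementary identity among factorials and powers of $2$, and then verifying that identity by induction on $n$. Substituting
\[
f^{(n-1,n-2,\ldots,1)} = \frac{\binom{n}{2}!}{\prod_{k=1}^{n-1}(2k-1)^{n-k}}
\]
into the right-hand side, the common factor $\binom{n}{2}!$ cancels, and the claim becomes equivalent to
\[
\frac{2^{\binom{n}{2}}}{\prod_{i=1}^n i!}
= \frac{1}{\prod_{k=1}^{n-1}(2k-1)^{n-k}} \cdot \prod_{i=1}^{n-1} \frac{1}{i+1}\binom{2i}{i},
\]
after writing $Cat(i) = \frac{1}{i+1}\binom{2i}{i} = \frac{(2i)!}{(i+1)!\,i!}$. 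So the whole proposition rests on a clean factorial identity with no polytope content.

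The cleanest route I would take is induction on $n$. For the base case $n=2$ both sides equal $1$ (with the empty product conventions: $f^{(1)}=1$ and $\prod_{i=1}^{1}Cat(i)=Cat(1)=1$, while the left side is $2^1/(1!\,2!)=1$). For the inductive step, I would form the ratio of the $n$ and $n-1$ versions of the left-hand side,
\[
\frac{\binom{n}{2}!\cdot 2^{\binom{n}{2}}/\prod_{i=1}^n i!}
{\binom{n-1}{2}!\cdot 2^{\binom{n-1}{2}}/\prod_{i=1}^{n-1} i!}
= \frac{\binom{n}{2}!}{\binom{n-1}{2}!}\cdot \frac{2^{\binom{n}{2}-\binom{n-1}{2}}}{n!},
\]
and simplify using $\binom{n}{2}-\binom{n-1}{2}=n-1$ together with $\binom{n}{2}!/\binom{n-1}{2}! = \prod_{j=\binom{n-1}{2}+1}^{\binom{n}{2}} j$, which is a product of $n-1$ consecutive integers. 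On the right-hand side the corresponding ratio is just the new factor $Cat(n-1)=\frac{(2n-2)!}{(n-1)!\,n!}$ divided by the change in the $f$-term, namely $f^{(n-1,\ldots,1)}/f^{(n-2,\ldots,1)}$; using the hook-length expression this change is $\binom{n}{2}!/\binom{n-1}{2}!$ divided by $(2n-3)!!$-type corrections coming from the denominators $\prod_k(2k-1)^{\cdots}$. It then suffices to check the single-step identity
\[
\frac{2^{n-1}}{n!}\cdot \text{(product of $n-1$ consecutive integers)}
= Cat(n-1)\cdot \text{(ratio of hook denominators)},
\]
which reduces to the standard evaluation $\prod_{k=1}^{n-1}(2k)=2^{n-1}(n-1)!$ against the double-factorial $\prod_{k=1}^{n-1}(2k-1)$.

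The main obstacle I anticipate is not conceptual but bookkeeping: carefully tracking how the denominator $\prod_{k=1}^{n-1}(2k-1)^{n-k}$ in the hook-length formula changes when $n$ is incremented, since the exponent $n-k$ shifts for every $k$ simultaneously rather than just contributing one new factor. The cleanest way to neutralize this is to avoid induction on the $f$-term altogether and instead prove the equivalent all-at-once statement $\prod_{k=1}^{n-1}(2k-1)^{n-k}\cdot\prod_{i=1}^{n-1}Cat(i) = 2^{\binom{n}{2}}\binom{n}{2}!/\prod_{i=1}^n i! \cdot 1$ by matching prime-factor exponents or, more transparently, by writing everything in terms of $\prod (2i)!$, $\prod i!$ and the double factorial $(2k-1)!! = (2k)!/(2^k k!)$. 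In other words, I expect the core difficulty to be choosing the representation that makes the double-factorial denominator telescope; once one expands each $Cat(i)$ and each odd-number power into ordinary factorials and powers of $2$, the identity should collapse to a tautology, and I would present that direct expansion as the cleanest proof rather than the inductive outline above.
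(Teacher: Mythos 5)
Your proposal is correct and is essentially the paper's own proof: the paper simply states that the identity "is easily verified using the formula for $f^{(n-1,n-2,\ldots,1)}$ and for $Cat(i)=\frac{1}{i+1}\binom{2i}{i}$," which is exactly the direct substitution-and-expansion you settle on. Your final route (cancel $\binom{n}{2}!$, expand each $Cat(i)=\frac{(2i)!}{i!\,(i+1)!}$, and use $(2i)!/i! = 2^i\,(2i-1)!!$ so that $\prod_{i=1}^{n-1}(2i-1)!! = \prod_{k=1}^{n-1}(2k-1)^{n-k}$) does indeed collapse the identity to a tautology, so the proof is complete and matches the paper's intent.
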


\begin{proof}
The identity is easily verified using the formula for $f^{(n-1,n-2,\ldots,1)}$
and for $Cat(i)=\frac{1}{i+1}\binom{2i}{i}$.
\end{proof}

\begin{remark}
When we set $a=1$ and $c=2$ in \eqref{eq:ctterm} one can also show that 
\begin{equation}
L_n(1,2)  = \frac{(n(n-1))!}{n!(\prod_{i=1}^{n-1} i!)^2} = f^{(n-1)^n}
\cdot \prod_{i=1}^{n-1} \left( \frac{i+1}{2}
           Cat(i)^2\right),
\end{equation}
where $f^{(n-1)^n}$ is the number of standard Young tableaux of
rectangular shape $(n-1)^n$ which equals $(n(n-1))! \prod_{k=0}^{n-1}
k! \,/\, \prod_{k=0}^{n-1} (n+k-1)!$.
We were unable to find  similar identities relating $L_n(1,c)$, $c\geq
3$ with the number of SYT of shape $\lambda$.
\end{remark}


\begin{remark}
A similar iterated constant term identity to \eqref{eq:ctterm}
is Zeilberger's variation of the Morris constant term identity \cite{Z}
used to prove \eqref{eq:z}. We state the version in  \cite[\S 3.5]{GX}: for $n\geq 2$
and nonnegative integers $a,b,c$ let
\begin{equation} \label{eq:MorrisId}
M_n(a,b,c) := \CTn{n} \,\, \prod_{i=1}^{n} x_i^{-a+1} (1-x_i)^{-b} \prod_{1\leq
  i < j \leq n} (x_i-x_j)^{-c} 
\end{equation}
then
\[
M_n(a,b,c) = \prod_{j=0}^{n-1} \frac{\Gamma(1+c/2)\Gamma(a+b-1+(n+j-1)c/2)}{\Gamma(1+(j+1)c/2)\Gamma(a+jc/2)\Gamma(b+jc/2)},
\]
and in particular
\[
M_n(1,1,1) = \prod_{i=0}^{n-1} \frac{1}{i+1}\binom{2i}{i}.
\]

Moreover, let $h_k(x_1,\ldots,x_n)$ denote the $k\textsuperscript{th}$ complete symmetric
polynomial in the variables $x_1,\ldots,x_n$. Since $\prod_{i=1}^n (1-x_i)^{-1} = \sum_{k\geq 0}
h_k(x_1,\ldots,x_n)$ then by linearity of  $\CTn{n}$ and degree considerations,
$M_n(a,1,c)$ can be expressed as a sum of iterated constant term extractions
all except one are zero. Thus
\begin{equation} \label{eq:altMorris}
M_n(a,1,c) = \CTn{n} \,\,  h_{((a-1)n+c
  \binom{n}{2})}(x_1,\ldots,x_n) \prod_{i=1}^{n} x_i^{-a+1}\prod_{1\leq
  i < j \leq n} (x_i-x_j)^{-c}. 
\end{equation}
This alternate description of $M_n(a,1,c)$ resembles the original 
definition of $L_n(a,c)$ in \eqref{eq:ct}.  Conversely, one can show
using $(1-e_1)^{-1}  = \sum_{k\geq 0} e_1^k$, linearity, and degree
considerations that $L_n(a,c)$ equals the following iterated constant term
\begin{equation} \label{eq:altctid}
L_n(a,c) = \CTn{n} \,\, (1-e_1)^{-1} \prod_{i=1}^{n} x_i^{-a+1} \prod_{1\leq
  i < j \leq n} (x_i-x_j)^{-c},
\end{equation}
which resembles the original description of $M_n(a,1,c)$.
\end{remark}

\subsection{Proof of Lemma~\ref{main:lemma} via Baldoni-Vergne recurrence
approach}

To prove Lemma~\ref{main:lemma} we follow Xin's \cite[\S 3.5]{GX} simplified recursion approach of the proof by
Baldoni-Vergne \cite{BVMorris} of the Morris identity
\eqref{eq:MorrisId}. 

\smallskip

\noindent {\bf Outline of the proof:} First, for nonnegative integers  $n \geq 2,
a,c$ and $\ell=0,\ldots,n$ we introduce the constants
\[
C_n(\ell, a, c) := \CTn{n} \frac{P_{\ell} \cdot e_1(x_1,\ldots,x_n)^{(a-1)n+ c\binom{n}{2}-\ell}}{\prod_{i=1}^n  x_i^{a-1} \prod_{i=1}^n(x_i-x_j)^c},
\]
where $P_{\ell} = \ell! (n-\ell)!
  e_{\ell}(x_1,\ldots,x_n)$. Note that $C_n(0,a,c) = n!L_n(a,c)$. Second,
  we show that $C_n(\ell,a,c)$ satisfy certain linear relations (Proposition~\ref{prop:rels}). Third, we show that these
  relations uniquely determine the constants $C_n(\ell,a,c)$
  (Proposition~\ref{prop:recunique}). Lastly, in Proposition~\ref{prop:eqCn} we define 
  $C'_n(\ell,a,c)$ as certain products of Gamma functions such that
  $C'_n(0,a,c)/n!$ coincides with the expression on the right-hand-side of
  \eqref{eq:ctterm}. We then show that $C'_n(\ell,a,c)$ satisfy the same
  relations as $C_n(\ell,a,c)$ and since these relations determine
  uniquely the constants then
  $C'_n(\ell,a,c)=C_n(\ell,a,c)$. This completes the proof of the Lemma.  

\smallskip

The  $C_n(\ell,a,c)$ satisfy the
following relations.

\begin{proposition} \label{prop:rels}
Let $C_n(\ell,a,c)$ be defined as above then for $1\leq \ell \leq n$
we have:
\begin{align}
\frac{C_n(\ell,a,c)}{C_n(\ell-1,a,c)} &= \frac{a-1+c(n-\ell)/2}{(a-1)n + c\binom{n}{2} - \ell +1}, \label{rec5}\\
C_n(n,a,c) &= C_n(0,a-1,c), \label{rec1}\\
C_n(n-1,1,c) & = C_{n-1}(0,c,c), \label{rec2} \qquad  (\text{if } n>1)\\
C_n(0,1,0) & = n!, \label{rec3}\\
C_n(\ell,0,c) & = 0. \label{rec4} 
\end{align}
\end{proposition}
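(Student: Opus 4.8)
The plan is to work in the field of iterated formal Laurent series and to exploit the single fact that the iterated residue of a partial derivative vanishes. Throughout, write $M = (a-1)n + c\binom{n}{2} - \ell$ for the exponent of $e_1$ occurring in $C_n(\ell,a,c)$, set $\Omega := \prod_{i=1}^n x_i^{-a}\prod_{1\le i<j\le n}(x_i-x_j)^{-c}$, and let $\mathbf{R}(f) := \Res_{x_n}\cdots\Res_{x_1} f$ denote the coefficient of $(x_1\cdots x_n)^{-1}$. Since $\CTn{n} g = \mathbf{R}\big(g/(x_1\cdots x_n)\big)$, the definition rewrites as $C_n(\ell,a,c) = \ell!(n-\ell)!\,\mathbf{R}\big(e_\ell\, e_1^{M}\,\Omega\big)$. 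The engine for the nontrivial relations is that $\mathbf{R}(\partial_{x_k} F) = 0$ for every $k$ and every $F$ in the field, because $\partial_{x_k}F$ carries no $x_k^{-1}$ term and the remaining residues commute past $\partial_{x_k}$.

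Relations \eqref{rec3} and \eqref{rec1} are immediate bookkeeping: for \eqref{rec3} all exponents vanish and the integrand is the constant $n!$, while for \eqref{rec1} one uses $P_n = n!\,e_n = n!\,x_1\cdots x_n$, so the factor $x_1\cdots x_n$ lowers every power $x_i^{-(a-1)}$ to $x_i^{-(a-2)}$ and shifts the $e_1$-exponent to $((a-1)-1)n + c\binom{n}{2}$, which is exactly the data defining $C_n(0,a-1,c)$. Relation \eqref{rec4} is a support argument: when $a=0$ the prefactor $\prod_i x_i^{-(a-1)}=\prod_i x_i$ multiplies the integrand by $x_1\cdots x_n$, and in the chosen expansion every other factor ($e_\ell$, $e_1^{M}$, and each $(x_1-x_j)^{-c}$) contributes only nonnegative powers of the first-extracted variable $x_1$; hence every monomial carries $x_1$ to a strictly positive power and $\CT_{x_1}$, and therefore $C_n(\ell,0,c)$, vanishes. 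Relation \eqref{rec2} is the dimension reduction $n\to n-1$: at $a=1$ one has $\prod_i x_i^{-(a-1)}=1$ and $P_{n-1}=(n-1)!\,e_{n-1}$, and performing the single-variable residue that eliminates one variable turns the $n$-variable identity into the $(n-1)$-variable one, the factors $(x_i-x_j)^{-c}$ involving the eliminated variable supplying exactly the shift of the remaining monomial prefactor from exponent $a=1$ to exponent $a=c$; this last step is a standard but slightly delicate one-variable Morris-type computation.

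The heart of the proof is the recurrence \eqref{rec5}, which I would obtain by applying $\mathbf{R}(\partial_{x_k}(\cdot))=0$ to two carefully chosen exact forms and subtracting. Writing $e_m^{(k)} := e_m(x_1,\dots,\widehat{x_k},\dots,x_n)$, I would sum over $k$ the derivatives of $F^{\mathrm{I}}_k := e_\ell\, e_1^{M+1}\,\Omega$ and $F^{\mathrm{II}}_k := e_\ell^{(k)}\, e_1^{M+1}\,\Omega$, expand by the product rule, and simplify using $\partial_{x_k}e_1 = 1$, $\partial_{x_k}\Omega = \Omega\big(-a/x_k - c\sum_{j\ne k}(x_k-x_j)^{-1}\big)$, together with the symmetric-function identities $\sum_k e_{\ell-1}^{(k)} = (n-\ell+1)e_{\ell-1}$, $\sum_k e_\ell^{(k)} = (n-\ell)e_\ell$, the antisymmetry $\sum_k\sum_{j\ne k}(x_k-x_j)^{-1}=0$, and the less obvious
\[
\sum_k\sum_{j\ne k}\frac{e_\ell^{(k)}}{x_k - x_j} = -\binom{n-\ell+1}{2}\,e_{\ell-1},
\]
which follows from pairing $(k,j)$ with $(j,k)$ and the telescoping $e_\ell^{(k)} - e_\ell^{(j)} = (x_j - x_k)\,e_{\ell-1}(\widehat{x_k,x_j})$. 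Both sums produce the same spurious term $-a\,e_\ell\,e_1^{M+1}\,\Omega\,(e_{n-1}/e_n)$ (for $F^{\mathrm{II}}$ it appears through $\sum_k e_\ell^{(k)}/x_k = e_\ell\,e_{n-1}/e_n - (n-\ell+1)e_{\ell-1}$), so it cancels in the difference, leaving
\[
0 = \mathbf{R}\Big[(M+1)\ell\, e_\ell\, e_1^{M}\,\Omega + \big((n-\ell+1)(1-a) - c\tbinom{n-\ell+1}{2}\big)\, e_{\ell-1}\,e_1^{M+1}\,\Omega\Big].
\]
Rewriting the two residues as $C_n(\ell,a,c)/(\ell!(n-\ell)!)$ and $C_n(\ell-1,a,c)/((\ell-1)!(n-\ell+1)!)$ and clearing factorials yields precisely $(M+1)\,C_n(\ell,a,c) = \big(a-1 + c(n-\ell)/2\big)\,C_n(\ell-1,a,c)$, which is \eqref{rec5}.

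The main obstacle is engineering this cancellation. A single exact form of the expected shape is useless: any symmetric integrand of the residue-relevant degree is annihilated by the Euler operator $\sum_k x_k \partial_{x_k}$, and the naive form $F^{\mathrm{I}}_k$ alone leaves the irreducible term $-a\,e_\ell e_1^{M+1}\Omega\,(e_{n-1}/e_n)$, which is not a constant multiple of any $C_n(\ell')$. The key insight is to introduce the second, $k$-dependent form $F^{\mathrm{II}}_k$ built from $e_\ell^{(k)}$: its Vandermonde-derivative contribution is exactly what generates the factor $c\binom{n-\ell+1}{2}$ (hence, after dividing by $n-\ell+1$, the half-integer coefficient $c(n-\ell)/2$), while its monomial-derivative contribution reproduces the same $e_{n-1}/e_n$ term so that the two cancel on subtraction. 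Verifying the auxiliary identity for $\sum_k\sum_{j\ne k} e_\ell^{(k)}/(x_k-x_j)$ and tracking the coefficients through the two expansions is the one genuinely computational point.
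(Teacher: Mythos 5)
Your proposal is correct, and for the crucial recurrence \eqref{rec5} it takes a genuinely different route from the paper. The paper differentiates the single \emph{non-symmetric} form $e_1\cdot x_1x_2\cdots x_\ell U_\ell$ (where $U_\ell = e_1^{M}\Omega$ in your notation) with respect to $x_1$ alone, and then antisymmetrizes (for $c$ odd) or symmetrizes (for $c$ even) over $\mathfrak{S}_n$, exploiting the fact that $U_\ell$ is itself anti/symmetric; in that setup the pole $-a/x_1$ coming from $\partial_{x_1}\Omega$ is absorbed by the factor $x_1$ present in the numerator, so no spurious term ever appears, and the auxiliary identity needed is $2\sum_{w\in\mathfrak{S}_n} w\cdot\bigl(x_1\cdots x_\ell\sum_{j=2}^n (x_1-x_j)^{-1}\bigr) = (n-\ell)P_{\ell-1}$. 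You instead sum $\partial_{x_k}$ over all $k$ applied to two \emph{symmetric} families, $e_\ell e_1^{M+1}\Omega$ and $e_\ell^{(k)}e_1^{M+1}\Omega$, and subtract; spurious terms $-a\,e_\ell e_1^{M+1}\Omega\, e_{n-1}/e_n$ do appear but cancel in the difference. I checked both expansions and your identity $\sum_k\sum_{j\ne k} e_\ell^{(k)}/(x_k-x_j) = -\binom{n-\ell+1}{2}e_{\ell-1}$ (the pairing/telescoping argument is right), and the bookkeeping lands exactly on $(M+1)\,C_n(\ell,a,c) = \bigl(a-1+c(n-\ell)/2\bigr)\,C_n(\ell-1,a,c)$, which is \eqref{rec5}. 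What your version buys: it is uniform in $c$ (no parity case split) and stays entirely within symmetric-function identities; what it costs is having to engineer the second exact family to kill the $e_{n-1}/e_n$ term, a complication the paper's non-symmetric form avoids by design. A further difference: you prove \eqref{rec1}, \eqref{rec3}, and \eqref{rec4} directly (correctly --- the $x_1$-support argument for \eqref{rec4} is valid under the paper's expansion convention, in which every factor contributes nonnegative powers of $x_1$), whereas the paper cites Xin's thesis for all of \eqref{rec1}--\eqref{rec4}. The one soft spot is \eqref{rec2}: your ``one-variable Morris-type computation'' is a sketch, not a proof; the paper does not prove it either but points to a complete argument in \cite[Theorem 3.5.2]{GX}, so to make your write-up self-contained you still owe that reduction.
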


\begin{proof}
The relations \eqref{rec1}-\eqref{rec4} follow from the same proof as
in  \cite[Theorem 3.5.2]{GX}
$C_n(\ell,a,c)$. 

We now prove \eqref{rec5}. Let $U_{\ell}=e_1^{(a-1)n+ c\binom{n}{2}-\ell}/(\prod_{i=1}^n  x_i^{a}
\prod_{i=1}^n(x_i-x_j)^c)$, since $\CT_y g(y) = \Res_y y g(y)$ then 
\begin{equation} \label{eq:ct2res}
C_n(\ell, a, c) = \Res_{x_n} \cdots \Res_{x_1} \, P_{\ell}
U_{\ell},
\end{equation}
Next we calculate the
following derivative with respect to $x_1$.
\begin{multline}
\frac{\partial}{\partial{x_1}}  e_1\cdot  x_1x_2\cdots x_{\ell}
U_{\ell} = \left((a-1)n+c{\textstyle \binom{n}{2}}-\ell+1\right)x_1\cdots x_{\ell}
U_{\ell} +  (1-a)x_2\cdots x_{\ell} U_{\ell-1} +\\
 - c \cdot 
x_1\cdots x_{\ell} \sum_{j=2}^n \frac{U_{\ell-1}}{x_1-x_j}. \label{eq:master} 
\end{multline}

If $c$ is odd then $U_{\ell}$ is anti-symmetric. If we
anti-symmetrize \eqref{eq:master} over the symmetric group $\mathfrak{S}_n$, we get
\begin{multline*}
\sum_{w\in \mathfrak{S}_n} (-1)^{inv(w)} w \cdot \left(\frac{\partial}{\partial{x_1}}  e_1\cdot x_1x_2\cdots x_{\ell}
U_{\ell} \right) =\\
\left((a-1)n+c{\textstyle \binom{n}{2}}-\ell+1\right)P_{\ell}U_{\ell} +  (1-a)
P_{\ell-1} U_{\ell-1} - c\sum_{w\in \mathfrak{S}_n} w\cdot
x_1\cdots x_{\ell} \sum_{j=2}^n \frac{U_{\ell-1}}{x_1-x_j}
\end{multline*}
One can check that 
\[
2 \sum_{w\in \mathfrak{S}_n} w\cdot
x_1\cdots x_{\ell} \sum_{j=2}^n \frac{1}{x_1-x_j} =
(n-\ell) P_{\ell-1}. 
\]
So putting everything together for $c$ odd we obtain 
\begin{multline}
\sum_{w\in \mathfrak{S}_n} (-1)^{inv(w)} w \cdot
\left(\frac{\partial}{\partial{x_1}}  e_1 \cdot x_1x_2\cdots x_{\ell}
U_{\ell} \right) = \\
\left((a-1)n+c{\textstyle \binom{n}{2}}-\ell+1\right)P_{\ell}U_{\ell} - (a-1 +
c(n-\ell)/2)P_{\ell-1}U_{\ell-1}. \label{eq:odd} 
\end{multline}
Next, if $c$ is even, $U_{\ell}$ is symmetric. If we
symmetrize \eqref{eq:master} over $\mathfrak{S}_n$ and do similar
simplifications as in the previous case we get
\begin{align}
\sum_{w\in \mathfrak{S}_n} &  w \cdot \left(\frac{\partial}{\partial{x_1}}  e_1 x_1x_2\cdots x_{\ell}
U_{\ell} \right) = \label{eq:even}\\
& \left((a-1)n+c{\textstyle \binom{n}{2}}-\ell+1\right)P_{\ell}U_{\ell} - (a-1 +
c(n-\ell)/2)P_{\ell-1}U_{\ell-1}. \nonumber
\end{align}
Finally, we take the iterated residue $\Res_{x_n}\cdots \Res_{x_1}$ of \eqref{eq:odd} and
\eqref{eq:even}. Since the left-hand-side of these two equations consist of
sums of derivatives with respect to $x_1,\ldots,x_n$, then their iterated residues
$\Res_{\bf x}$
are zero \cite[Remark 3(c), p. 15]{BVMorris}. This combined with
\eqref{eq:ct2res} yields
\[
0 = \left((a-1)n+c{\textstyle \binom{n}{2}}-\ell+1\right) C_n(\ell,a,c) - (a-1+c(n-\ell)/2) C_n(\ell-1,a,c),
\]
which proves \eqref{rec5} for $c$ even or odd.
\end{proof}

We now show that the recurrences \eqref{rec5}-\eqref{rec4} determine entirely the
constants $C_n(\ell,a,c)$ (same algorithm as in
\cite[p. 10]{BVMorris}). 

\begin{proposition} \cite[p. 10]{BVMorris} \label{prop:recunique}
The recurrences \eqref{rec5}-\eqref{rec4} determine uniquely the
constants $C_n(\ell,a,c)$.
\end{proposition}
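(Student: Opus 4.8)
The plan is to exhibit an explicit evaluation order in which each constant $C_n(\ell, a, c)$ is forced by the five relations, so that at most one family of constants can satisfy all of them simultaneously. The first observation is that relation~\eqref{rec5} reduces everything to the $\ell = 0$ constants: for fixed $n, a, c$, iterating \eqref{rec5} expresses $C_n(\ell, a, c) = C_n(0,a,c)\prod_{m=1}^\ell \frac{a - 1 + c(n-m)/2}{(a-1)n + c\binom{n}{2} - m + 1}$, so it suffices to show that each $C_n(0, a, c)$ is uniquely determined. I would organize this as an induction on the pair $(n, a)$ ordered lexicographically with $n$ primary. The base cases are $C_n(0, 0, c) = 0$ from \eqref{rec4}, $C_n(0, 1, 0) = n!$ from \eqref{rec3}, and the smallest value $n = 1$, where the constant term involves no factors $(x_i - x_j)$ and a direct computation gives $C_1(0, a, c) = 1$.

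For the inductive step I would split on the value of $a$. When $a \geq 2$, combine \eqref{rec1} with the $\ell = n$ instance of the reduction above to write $C_n(0, a-1, c) = C_n(n, a, c) = C_n(0, a, c) \prod_{m=1}^n \frac{a - 1 + c(n-m)/2}{(a-1)n + c\binom{n}{2} - m + 1}$; since $a \geq 2$ makes every numerator at least $1$ and every denominator at least $1$, the product is a nonzero rational number and one may solve for $C_n(0,a,c)$ in terms of $C_n(0, a-1, c)$, which has strictly smaller $a$ at the same $n$. When $a = 1$ and $c \geq 1$, I would instead use \eqref{rec2}: the $\ell = n-1$ instance of the reduction gives $C_{n-1}(0, c, c) = C_n(n-1,1,c) = C_n(0,1,c)\prod_{m=1}^{n-1}\frac{c(n-m)/2}{c\binom{n}{2} - m + 1}$, whose numerators $c(n-m)/2$ and denominators $c\binom{n}{2} - m + 1$ are all positive for $1 \leq m \leq n-1$, $c \geq 1$, $n \geq 2$; hence $C_n(0,1,c)$ is forced by $C_{n-1}(0,c,c)$, a constant at strictly smaller $n$. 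In every non-base case the determining constant precedes $(n,a)$ in the lexicographic order, so the induction is well-founded and, via the first reduction, every $C_n(\ell, a, c)$ is pinned down.

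The main obstacle, and the step deserving the most care, is verifying that the relevant products of \eqref{rec5}-ratios never vanish and have nonzero denominators, since it is precisely this nonvanishing that lets us divide and conclude that each reconstruction is \emph{uniquely forced} rather than merely consistent. This is also why the two cases must be separated: for $a = 1$ the $\ell = n$ ratio appearing in \eqref{rec1} has a vanishing numerator $a-1 = 0$, so \eqref{rec1} carries no information about $C_n(0,1,c)$ and one is obliged to route through the dimension-lowering relation \eqref{rec2} instead. A secondary subtlety is that this $a = 1, c \geq 1$ reduction raises the $a$-parameter (from $1$ to $c$) while lowering $n$, so the argument genuinely requires $n$ as the primary induction variable rather than a single induction on $a$.
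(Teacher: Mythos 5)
Your proof is correct and takes essentially the same approach as the paper's: \eqref{rec5} transports between $\ell$-values, \eqref{rec1} decrements $a$, \eqref{rec2} drops from $n$ to $n-1$ when $a=1$, and everything grounds out in \eqref{rec3}, \eqref{rec4} and the $n=1$ case --- this is exactly the paper's three-case algorithm, recast as a lexicographic induction on $(n,a)$. Your explicit nonvanishing checks on the ratio products, and your observation that \eqref{rec1} is vacuous at $a=1$ (forcing the route through \eqref{rec2}), simply make precise the divisions the paper performs implicitly.
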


\begin{proof}
We give an algorithm to compute the constants $C_n(\ell,a,c)$
recursively using \eqref{rec5}-\eqref{rec4}. The algorithm has the
following three cases:

\medskip

\noindent {\em Case 1.}
If $c=0$ and $a>1$ we use \eqref{rec5} repeatedly to increase $\ell$ up to
$n$. We can use this recursion since $a-1+c(n-\ell)=a-1>0$. If
$\ell=n$ then we can apply \eqref{rec1} and go from $C_n(n,a,0)$ to
$C_n(0,a-1,0)$:
\[
\xymatrix{
C_n(\ell,a,0)  \ar[r]^(.45){\eqref{rec5}} & C_n(\ell+1,a,0) 
\ar@{-->}[rr]^{\eqref{rec5}^*}
&& C_n(n,a,0)
\ar[r]^(.45){\eqref{rec1}} & C_n(0,a-1,0).
}
\]
Thus computing $C_n(\ell,a,0)$ reduces to finding $C_n(0,1,0)$ which
equals $n!$ by \eqref{rec3}. 

\medskip

\noindent {\em Case 2.} If $c>0$ and $a>1$ we use \eqref{rec5}
repeatedly to increase $\ell$ up
  to $n$. We can use this recursion since
$a-1+c(n-\ell)=a-1>0$. If $\ell=n$ then we apply \eqref{rec1} and go
from $C_n(n,a,c)$ to $C_n(0,a-1,c)$: 
\[
\xymatrix{
C_n(\ell,a,c)  \ar[r]^(.45){\eqref{rec5}} & C_n(\ell+1,a,c)
\ar@{-->}[rr]^{\eqref{rec5}^*} && C_n(n,a,c)
\ar[r]^(.45){\eqref{rec1}} & C_n(0,a-1,c).
}
\]
Thus computing $C_n(\ell,a,c)$ reduces to finding $C_n(0,1,c)$. 

\medskip

\noindent {\em Case 3.}
To compute $C_n(0,1,c)$ with $c>0$, we use \eqref{rec5} repeatedly to increase
$\ell$ from $0$ up
to $n-1$. Then we can apply \eqref{rec2} and go from $C_n(n-1,1,c)$ to $C_{n-1}(0,c,c)$:
\[
\xymatrix{
C_n(0,1,c)  \ar[r]^{\eqref{rec5}} & C_n(1,1,c)
\ar@{-->}[rr]^(.45){\eqref{rec5}^*} &&   C_n(n-1,1,c)
\ar[r]^{\eqref{rec2}} & C_{n-1}(0,c,c).
}
\]
Thus by iterating this reduction with Case 2 we see that computing $C_n(0,1,c)$ reduces to finding $C_1(\ell,a,c)$. Having
$n=1$ guarantees there is no term 
\[
\prod_{1\leq
  i<j\leq n} (x_i-x_j)^{-c}.
\]
 So
$C_1(\ell,a,c)=C_1(\ell,a,0)$ which we can compute with Case 1. 
\end{proof}

Next we give an explicit
product formula for $C_n(\ell,a,c)$. We prove this by showing that the
formula satisfies relations~\eqref{rec5}-\eqref{rec4}  which by
Proposition~\ref{prop:recunique} determine uniquely  $C_n(\ell,a,c)$.

\begin{proposition} \label{prop:eqCn}
If $c>0$ or if $a>1$ then for $1\leq \ell \leq n$ then
\begin{equation} \label{eq:form1}
C_n(\ell,a,c) =  C_n(0,a,c)\prod_{j=1}^{\ell} \frac{a-1+(n-j)c/2}{(a-1)n + c{n \choose
  2} - j+1}.    
\end{equation}
if $a\geq 1$ then
\begin{equation} \label{eq:form2}
C_n(0,a,c) = n!\cdot\Gamma\big(1+(a-1)n+c{\textstyle\binom{n}{2}}\big)
\prod_{i=0}^{n-1} \frac{\Gamma(1+c/2)}{\Gamma(1+(i+1)c/2)\Gamma(a+ic/2)}.
\end{equation}
\end{proposition}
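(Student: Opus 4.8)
The plan is to recognize that the first formula \eqref{eq:form1} is essentially free, and to concentrate the work on the closed form \eqref{eq:form2} for $C_n(0,a,c)$. Indeed, \eqref{eq:form1} is nothing more than the telescoped product of the one-step ratios supplied by relation \eqref{rec5}: iterating $C_n(\ell,a,c) = \frac{a-1+c(n-\ell)/2}{(a-1)n+c\binom{n}{2}-\ell+1}\,C_n(\ell-1,a,c)$ from $\ell$ down to $0$ yields \eqref{eq:form1} verbatim, with no hypothesis beyond that of Proposition~\ref{prop:rels}. So it suffices to establish \eqref{eq:form2}. Following the outline, I would introduce $C'_n(\ell,a,c)$ defined by the right-hand sides of \eqref{eq:form2} and \eqref{eq:form1}, and prove that $C'_n$ obeys the same recurrences \eqref{rec5}, \eqref{rec1}, \eqref{rec2}, \eqref{rec3} that drive the algorithm of Proposition~\ref{prop:recunique}; since those recurrences determine the constants uniquely, this forces $C'_n = C_n$ and hence \eqref{eq:form2}.

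The two easy verifications come first. Relation \eqref{rec5} holds for $C'_n$ by construction, since \eqref{eq:form1} is defined as the partial product of precisely those ratios. Relation \eqref{rec3} follows by setting $a=1$, $c=0$ in \eqref{eq:form2}: the leading factor becomes $\Gamma(1)=1$ and every factor of the product is $\Gamma(1)/(\Gamma(1)\Gamma(1))=1$, leaving $n!$. The first genuine computation is \eqref{rec1}. Putting $\ell = n$ in \eqref{eq:form1} and writing $N := (a-1)n + c\binom{n}{2}$, the denominator product telescopes as $\prod_{j=1}^n (N-j+1) = \Gamma(N+1)/\Gamma(N-n+1)$, cancelling the leading $\Gamma(1+N)$ and replacing it by $\Gamma(1+(N-n))$, while the numerator $\prod_{j=1}^n (a-1+(n-j)c/2) = \prod_{i=0}^{n-1}(a-1+ic/2)$ converts each $\Gamma(a+ic/2)^{-1}$ into $\Gamma(a-1+ic/2)^{-1}$ via $\Gamma(z+1)=z\Gamma(z)$. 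Since $N-n = (a-2)n + c\binom{n}{2}$ is exactly the value of $N$ at $a-1$, the result is precisely $C'_n(0,a-1,c)$, establishing \eqref{rec1}.

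The crux is relation \eqref{rec2}, because it simultaneously drops the number of variables from $n$ to $n-1$ and shifts $a$ from $1$ to $c$, so the two Gamma products have different lengths and different arguments. I would set $a=1$, $\ell = n-1$ in \eqref{eq:form1} and compare $C'_n(n-1,1,c)$ with $C'_{n-1}(0,c,c)$. The two leading Gamma factors match because of the arithmetic identity $(c-1)(n-1)+c\binom{n-1}{2} = c\binom{n}{2}-(n-1)$, which aligns the argument $N-n+1$ on both sides. The remaining products of $\Gamma(1+kc/2)$ are then reconciled using the functional equation in the form $\Gamma(c+ic/2) = \Gamma\!\big(\tfrac{(i+2)c}{2}\big) = \tfrac{2}{(i+2)c}\Gamma(1+(i+2)c/2)$, together with a reindexing of the products and the bookkeeping of the factors $(c/2)^{n-1}$ and $n!/(n-1)!$ that appear; the powers of $2$, of $c$, and of $\Gamma(1+c/2)$ all cancel, leaving the two sides identical. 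This telescoping across a change of dimension is the main obstacle, and it is where the functional equation and the above arithmetic identity do all the work. Finally, relation \eqref{rec4} (the case $a=0$) is not invoked by the algorithm of Proposition~\ref{prop:recunique}, so it need not be checked; alternatively one notes that $\Gamma(a+ic/2)^{-1}$ at $a=i=0$ equals $1/\Gamma(0)=0$, so \eqref{eq:form2} already gives $C'_n(0,0,c)=0$ and hence $C'_n(\ell,0,c)=0$ by \eqref{eq:form1}. With \eqref{rec5}, \eqref{rec1}, \eqref{rec2}, \eqref{rec3} verified, Proposition~\ref{prop:recunique} yields $C'_n = C_n$, which is exactly \eqref{eq:form1} and \eqref{eq:form2}.
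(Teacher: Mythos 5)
Your proposal is correct and follows essentially the same route as the paper: define $C'_n(\ell,a,c)$ by the right-hand sides, verify the recurrences \eqref{rec5}, \eqref{rec1}, \eqref{rec2}, \eqref{rec3} via $\Gamma(t+1)=t\Gamma(t)$ (your key arithmetic identity $(c-1)(n-1)+c\binom{n-1}{2}=c\binom{n}{2}-(n-1)$ is exactly what makes the paper's \eqref{rec2} computation close), and invoke the uniqueness from Proposition~\ref{prop:recunique}. Your two small deviations---obtaining \eqref{eq:form1} by telescoping \eqref{rec5} directly for $C_n$, and observing that \eqref{rec4} is either unused by the algorithm or automatic from $1/\Gamma(0)=0$---are harmless refinements of the same argument.
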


\begin{proof}
By Proposition~\ref{prop:recunique} it suffices to check that the formulas for $C_n(\ell,a,c)$ and $C_n(0,a,c)$ in
\eqref{eq:form1}, \eqref{eq:form2} satisfy the
relations \eqref{rec5}-\eqref{rec4}.

Let $C'_n(\ell,a,c)$ and $C'_n(0,a,c)$ be
the formulas in the right-hand-side of \eqref{eq:form1} and \eqref{eq:form2}
respectively.

Relation \eqref{rec5} is apparent from the definition of
$C'_n(\ell,a,c)$. 

Next we check that $C'_n(\ell,a,c)$ satisfies \eqref{rec1}. Using $\Gamma(t+1)=t\Gamma(t)$ repeatedly
we obtain:
\begin{align*}
\MoveEqLeft \frac{C'_n(n-1,a,c)}{C'_n(0,a-1,c)} = \\
&=\frac{\Gamma(1+(a-1)n +c{\textstyle \binom{n}{2}})}{\Gamma(1+(a-2)n
  + c{\textstyle \binom{n}{2}})} \prod_{j=1}^n
\frac{a-1+(n-j)c/2}{(a-1)n+c{\textstyle \binom{n}{2}}-j+1}
\prod_{i=0}^n
\frac{\Gamma(a-1+ic/2)}{\Gamma(a+ic/2)}\\
&= \prod_{j=1}^n ((a-1)n + c{\textstyle \binom{n}{2}} -j + 1) \prod_{j=1}^n
\frac{a-1+(n-j)c/2}{(a-1)n+c{\textstyle \binom{n}{2}}-j+1}
\prod_{i=0}^n
\frac{1}{a-1+ic/2}\\& =1,
\end{align*}
as desired. 

Next we verify \eqref{rec2}. Again, using
$\Gamma(t+1)=t\Gamma(t)$ repeatedly we obtain:
\begin{align*}
\MoveEqLeft \frac{C'_n(n-1,1,c)}{C'_{n-1}(0,c,c)} =\\
&= 
\frac{\prod_{j=1}^{n-1} (n-j)c/2}{\prod_{j=1}^{n-1} c{\textstyle \binom{n}{2}}-j+1}
\frac{n\Gamma(1+c{\textstyle \binom{n}{2}})}{\Gamma(1+c{\textstyle
    \binom{n}{2}}-(n-1))} \times \\
& \quad \times \frac{\Gamma(1+c/2)
}{\Gamma(1+(n-1)c/2)\Gamma(1+nc/2)} \frac{\prod_{i=0}^{n-2}
  \Gamma(c(i+2)/2)}{\prod_{i=0}^{n-2}
  \Gamma(1+ic/2)}\\
&=\frac{\prod_{j=1}^{n-1} (n-j)c/2}{\prod_{j=1}^{n-1} c{\textstyle \binom{n}{2}}-j+1}\frac{n
  \prod_{j=1}^{n-1} c{\textstyle \binom{n}{2}}-j+1}{1} 
\prod_{j=2}^n \frac{\Gamma(jc/2)}{\Gamma(1+jc/2)}\\
&= n\prod_{j=1}^{n-1} (n-j)c/2 
\prod_{j=2}^n \frac{1}{jc/2} = 1,
\end{align*}
as desired.

Finally, it is trivial to check that $C'_n(\ell,a,c)$ satisfy
\eqref{rec3} and \eqref{rec4}. Thus since $C'_n(\ell,a,c)$ satisfy
relations \eqref{rec5}-\eqref{rec4} and by Proposition~\ref{prop:recunique}  these
relations uniquely determine the constants $C_n(\ell,a,c)$ then $C'_n(\ell,a,c)=C_n(\ell,a,c)$.
\end{proof}

To conclude, since $C_n(0,a,c) = n!\cdot L_n(a,c)$ then
Lemma~\ref{main:lemma} follows from \eqref{eq:form2} in
Proposition~\ref{prop:eqCn}. By Corollary~\ref{cor:case11} and
Proposition~\ref{prop:sytcase11} $L_n(1,1)$ yields the desired
formula for the volume of $\TP_n({\bf 1)}$ which completes the proof of Theorem \ref{thm:vol}.

\section{Final remarks} \label{final-remarks}

\subsection{Diagonal harmonics and polytopes}

 Example~\ref{ex:tesler} states Haglund's result from \cite{Hag} showing that the
bigraded Hilbert series of the space $DH_n$ is given by a weighted sum
over Tesler matrices in $\mathcal{T}_n(1,1,\ldots,1)$. The space $DH_n$ has dimension $(n+1)^{n-1}$, the number of parking
functions of size $n$. A conjecture of Haglund and Loehr \cite{HL}, 
settled by Carlsson and Mellit \cite{CM} with their proof of the more
general {\em shuffle conjecture} \cite{HHLRU}, expresses the LHS as
\begin{equation} \label{eq:HL}
\mathcal{H}(DH_n,q,t)  = \sum_{\pi} q^{\dinv(\pi)}t^{\area(\pi)},
\end{equation}
where the sum is over parking functions $\pi$. For definitions of
the statistics $\dinv$ and $\area$ see \cite{Hag2}. By definition
$\mathcal{H}(DH_n,q,t)$ is a polynomial in $\mathbb{N}[q,t]$ and symmetric
in $q$ and $t$. The right-hand sides of \eqref{eq:HL} and \eqref{eq:haglund} 
give different combinatorial models for this Hilbert series where the ($q,t$
positivity, $q,t$ symmetry) are (trivial, non-trivial) and
(non-trivial, trivial) respectively. It remains open to prove directly
the equality of these models:
\begin{equation} \label{pf2tesler}
\sum_{\pi} q^{\dinv(\pi)}t^{\area(\pi)} = \sum_{A \in \mathcal{T}_n(1,1,\ldots,1)} wt(A),
\end{equation}
for $wt(A)$ as defined in \eqref{eq:Hwt}. Levande \cite{PL} verified this
identity for $(q,0)$ and $(1,t)$.  In particular, when $q=1,t=1$,
$wt(A) \mid_{q=1,t=1} = 0$ for any $n\times n$ Tesler matrix $A$
with more than $n$ nonzero entries and the matrices that survive
are the permutation Tesler matrices each with
 $n$ nonzero entries. Thus \eqref{pf2tesler} at  $q=1,t=1$ becomes
\[
(n+1)^{n-1} = \sum_{A} \prod_{i,j \,:\, a_{ij} >0} a_{ij} 
\] 
where the sum is over the $n!$ permutation Tesler matrices in
$\mathcal{T}_n({\bf 1})$; the vertices of polytope  $\TP_n({\bf
  a})$. This curious identity was proved combinatorially in
\cite[\S 5]{AGHRS} extending a function from Levande \cite{PL} from Tesler
matrices to permutations.

Analogously, an important subspace of the space $DH_n$ is the alternant
$DH_n^{\varepsilon}$ that has dimension
$Cat(n)=\frac{1}{n+1}\binom{2n}{n}$. The bigraded Hilbert series of
$DH^{\varepsilon}_n$ has the following combinatorial model by
Garsia and Haglund \cite{GH1,GH2}
\begin{equation} \label{eq:qtCat}
\mathcal{H}(DH^{\varepsilon}_n,q,t) = \sum_{P} q^{\area(P)}t^{\bounce(P)},
\end{equation}
where the sum is over {\em Dyck paths} $P$ of size $n$, see \cite[\S
3]{Hag2} for the definition of $\bounce$. Gorsky and
Negut \cite{GoN} also expressed this Hilbert series as a weighted sum
over Tesler matrices:
\begin{equation} \label{eq:qtCatGN}
\mathcal{H}(DH^{\varepsilon}_n,q,t) = \sum_{A \in \mathcal{T}_n(1,1,\ldots,1)} wt'(A),
\end{equation}
where
\[
wt'(A) = \prod_{a_{ii+1}>0} ( [a_{ii+1}+1]_{q,t} - [a_{ii+1}]_{q,t} )
\prod_{j>i+1 \,:\, a_{ij}>0} (-M)[a_{ij}]_{q,t},
\] 
for $M= (1-q)(1-t)$ and $[b]_{q,t} = (q^b-t^b)/(q-t)$ as in
\eqref{eq:Hwt}. When we set $q=1,t=1$ in  \eqref{eq:qtCatGN}, by the definition of $wt'(A)$ only the Tesler
matrices $A$ with support in the diagonals $a_{ii}$ and $a_{ii+1}$
survive each with weight $1$. So \eqref{eq:qtCatGN} becomes
\begin{equation} \label{eq:PS}
\frac{1}{n+1}\binom{2n}{n} = \#\{ A \in \mathcal{T}_n(1,1,\ldots,1) \,:\,
a_{ij}=0, j>i+1\}.
\end{equation}
This identity can be proved in the context of flow
polytopes. Namely, translating from flow polytopes (see Lemma~\ref{lem:flow}) Baldoni-Vergne \cite{BV2} noticed that the polytope
\[
\{ (m_{i,j})\in \TP_n({\bf a}) \,:\, m_{i,j}=0, j
> i+1\}
\]
is the {\em Pitman-Stanley} polytope \cite{PSt} and when ${\bf a} = {\bf 1}$, this polytope has
  $\frac{1}{n+1}\binom{2n}{n}$ lattice points, explaining
  \eqref{eq:PS}, and volume $(n+1)^{n-1}$ (see \cite[\S1, \S5]{PSt}).

\subsection{Enumeration of Tesler matrices}

There is no known explicit formula for the number $T_n({\bf 1})$ of Tesler
matrices of size $n$. More than 20 terms of the sequence $\{T_n({\bf
  1})\}_{n=1}$ have been computed in the OEIS \cite[\href{http://oeis.org/A008608}{A008608}]{OEIS}:
\begin{quote}
1, 2, 7, 40, 357, 4820, 96030, 2766572, 113300265, 6499477726,
\linebreak
515564231770, 55908184737696, \ldots
\end{quote}

Regarding asymptotic of this sequence we give some preliminary lower
and upper bounds that follows from a recursive construction by
Drew Armstrong \cite{A}. 

\begin{proposition} \label{prop:bounds} $n! \leq T_n({\bf 1}) \leq 2^{\binom{n}{2}}$.
\end{proposition}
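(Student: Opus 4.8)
The plan is to obtain \emph{both} inequalities from a single recursion gotten by peeling off the last column of a Tesler matrix. Given $B = (b_{i,j}) \in \mathcal{T}_n({\bf 1})$, I would delete row and column $n$ and fold the deleted column back onto the diagonal: set $b'_{k,\ell} = b_{k,\ell}$ for $k < \ell \le n-1$ and $b'_{k,k} = b_{k,k} + b_{k,n}$ for $1 \le k \le n-1$. Since $b_{k,n} + \sum_{k<\ell\le n-1} b_{k,\ell} = \sum_{k<\ell\le n} b_{k,\ell}$, the $k$-th hook sum is unchanged, so $B' = (b'_{k,\ell})$ lies in $\mathcal{T}_{n-1}({\bf 1})$. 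Conversely, over a fixed $\hat B = (\hat b_{k,\ell}) \in \mathcal{T}_{n-1}({\bf 1})$ the off-diagonal entries $b_{k,\ell}$ with $k<\ell\le n-1$ are forced to equal $\hat b_{k,\ell}$, while the last-column entries may be chosen freely subject only to $0 \le b_{k,n} \le \hat b_{k,k}$ (so that $b_{k,k} = \hat b_{k,k} - b_{k,n} \ge 0$); the corner $b_{n,n} = 1 + \sum_{k<n} b_{k,n}$ is then forced and automatically nonnegative. Thus the fiber over $\hat B$ has size exactly $\prod_{k=1}^{n-1}(\hat b_{k,k} + 1)$, and I obtain the recursion
\[
T_n({\bf 1}) = \sum_{\hat B \in \mathcal{T}_{n-1}({\bf 1})} \prod_{k=1}^{n-1}(\hat b_{k,k}+1).
\]

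Next I would establish the one structural fact that makes the product controllable: the diagonal entries of any $\hat B \in \mathcal{T}_{n-1}({\bf 1})$ sum to $n-1$. Indeed, summing the $n-1$ hook-sum equations, every off-diagonal entry $\hat b_{i,j}$ (with $i<j$) contributes $+1$ to row $i$ and $-1$ to row $j$ and so cancels, leaving $\sum_{k=1}^{n-1}\hat b_{k,k}$ equal to the sum of the hook sums, namely $n-1$. Hence the $n-1$ positive integers $\hat b_{k,k}+1$ sum to $2(n-1)$, and I apply two elementary extremal estimates. By the AM--GM inequality their product is at most $\big(2(n-1)/(n-1)\big)^{n-1} = 2^{n-1}$; and the product of $n-1$ positive integers with fixed sum $2(n-1)$ is minimized at the extreme distribution $(n,1,\dots,1)$, giving at least $n$. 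Therefore $n \le \prod_{k=1}^{n-1}(\hat b_{k,k}+1) \le 2^{n-1}$ for \emph{every} $\hat B$.

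Finally I would assemble the bounds by induction. Substituting the two estimates term by term into the recursion yields $n\,T_{n-1}({\bf 1}) \le T_n({\bf 1}) \le 2^{n-1}\,T_{n-1}({\bf 1})$, and with the base case $T_1({\bf 1}) = 1$ this gives $n! \le T_n({\bf 1})$ and, using $2^{n-1}\cdot 2^{\binom{n-1}{2}} = 2^{\binom{n}{2}}$, the bound $T_n({\bf 1}) \le 2^{\binom{n}{2}}$. The only step demanding real care is the exact fiber computation: one must check that the folding map is well defined into $\mathcal{T}_{n-1}({\bf 1})$ and surjective, and, most importantly, that \emph{no} constraint beyond $b_{k,n}\le \hat b_{k,k}$ is imposed on the new last column (in particular that the hook sums of rows $1,\dots,n-1$ of $B$ remain equal to $1$ for every admissible choice). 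Once this identification of the fiber with $\prod_{k=1}^{n-1}\{0,1,\dots,\hat b_{k,k}\}$ is verified, the two inequalities are immediate from AM--GM and the extreme-point estimate, so the substance of the argument lies entirely in setting up the recursion correctly rather than in the final bounds.
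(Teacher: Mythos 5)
Your proof is correct and follows essentially the same route as the paper: the paper uses exactly this folding projection $\pi:\mathcal{T}_n({\bf 1})\to\mathcal{T}_{n-1}({\bf 1})$, the fiber-size recursion $T_n({\bf 1})=\sum_{\hat B}\prod_{k=1}^{n-1}(\hat b_{k,k}+1)$, and the bounds $n\le\prod_{k=1}^{n-1}(\hat b_{k,k}+1)\le 2^{n-1}$, then iterates. The only difference is that you supply details the paper leaves as ``one can show''---namely the trace identity $\sum_k \hat b_{k,k}=n-1$ and the AM--GM and extreme-distribution estimates---which is a welcome completion of the same argument.
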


\begin{proof}
Let $\pi: \mathcal{T}_n(a_1,\ldots,a_{n-1},a_n) \to
\mathcal{T}_{n-1}(a_1,\ldots,a_{n-1})$ defined by $\pi: (a_{i,j}) \mapsto (b_{i,j})$
where $b_{i,j} = \begin{cases} a_{i,i} + a_{i,n} &\text{ if } i=j,\\
  a_{i,j} &\text{ if } i\neq j \end{cases}$. See Figure~\ref{fig:DrewProj} for an
example of $\pi$. The
map $\pi$ is surjective and for each $B\in
\mathcal{T}_{n-1}(a_1,\ldots,a_{n-1})$,  the size of the preimage is $\pi^{-1}(B) = \prod_{i=1}^{n-1}
(1+b_{i,i})$. Thus
\begin{equation} \label{eq:ptnKPF}
T_n(a_1,\ldots,a_n) =\sum_{B \in
  \mathcal{T}(a_1,\ldots,a_{n-1})} \prod_{i=1}^{n-1}
(1+b_{i,i}).
\end{equation}
For the case ${\bf a} = {\bf 1}$ one can show that if $B \in
\mathcal{T}_{n-1}({\bf 1})$ then  $n\leq \pi^{-1}(B) \leq 2^{n-1}$.
Using these bounds for $\pi^{-1}(B)$ in \eqref{eq:ptnKPF} yields
\[
n\cdot T_{n-1}({\bf 1}) \leq T_n({\bf 1}) \leq 2^{n-1}\cdot T_{n-1}({\bf 1}).
\] 
Iterating these bounds give the desired result. 

An alternative proof of the lower bound is as follows: the matrices in $\mathcal{T}_n({\bf 1})$
include the $n!$ permutation Tesler matrices of size $n$. 
\end{proof}

\begin{figure}
\begin{center}
\includegraphics{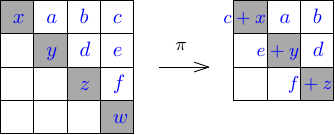}
\caption{Illustration of the projection $\pi$ used in the proof of
  Proposition~\ref{prop:bounds}.}
\label{fig:DrewProj}
\end{center}
\end{figure}

\subsection{Combinatorial proof volume of CRY and Tesler polytopes}

The product formulas \eqref{eq:z} and \eqref{eq:volformula}  for the volumes of the CRY 
and the Tesler
polytopes involving Catalan numbers and number of SYT
suggest a combinatorial proof that has been elusive since Zeilberger's
proof of \eqref{eq:z}. The current proofs of the formulas use the
Lidskii formula \eqref{eq:vol} for the volume of flow polytopes to
translate the problem to evaluations of Kostant partition functions
via constant term identities. 

It is also not clear why the volume of the CRY polytope divides the
volume of the Tesler polytope in terms of operations on polytopes. Curiously, using constant term identities it
  is possible to express the volume of the Tesler polytope as a
  nonnegative sum of terms two of which are $f^{(n-1,n-2,\ldots,1)}$
  and $\prod_{i=0}^{n-1} Cat(i)$. Namely, by \eqref{eq:vol_to_CT} the volume of the Tesler polytope
$\TP_n({\bf 1})$ is the constant term of  $(e_1(x_1,\ldots,x_n))^{\binom{n}{2}}
  \prod_{1\leq i<j \leq n} (x_i-x_j)^{-1}$. Since $e_1^{\binom{n}{2}} = \sum_{\lambda
  \vdash \binom{n}{2}} f^{\lambda} s_{\lambda}$ where $s_{\lambda}$ is
the Schur function of $\lambda$, then by linearity of $\CTn{n}$ 
\[
f^{(n-1,n-2,\ldots,1)} \prod_{i=0}^{n-1} Cat(i) = \sum_{\lambda\vdash \binom{n}{2}}
f^{\lambda} \CTn{n}
s_{\lambda}(x_1,\ldots,x_n)  \prod_{1\leq i<j \leq n} (x_i-x_j)^{-1}.
\]
First, when
$\lambda=(n-1,n-2,\ldots,1)$ then we get $f^{(n-1,n-2,\ldots,1)}$ and by degree considerations and
\eqref{gs:kostant} one can show that
\begin{multline*}
\CTn{n}
s_{(n-1,n-2,\ldots,1)} \prod_{1\leq i<j \leq n}
(x_i-x_j)^{-1} =\\
\CTn{n} x_2x_3^2\cdots
x_n^{n-1} \prod_{1\leq i<j \leq n}
(x_i-x_j)^{-1} =  K_{A_{n-1}}({\bf 0}) = 1.
\end{multline*}
Second, when $\lambda=(\binom{n}{2})$ then $f^{(\binom{n}{2})}=1$ and by
the version \eqref{eq:altMorris} of the Morris identity
we get
\[
\CTn{n} s_{(\binom{n}{2})}\prod_{1\leq i<j \leq n}
(x_i-x_j)^{-1} = M_n(1,1,1)=\prod_{i=0}^{n-1}Cat(i).
\]
This of course this still leaves the question of why the nonnegative
sum ends up being the product $f^{(n-1,n-2,\ldots,1)}\cdot
\prod_{i=0}^{n-1}Cat(i)$ unanswered.
\bigskip

\noindent {\bf Acknowledgements:} 
We thank Drew Armstrong for many inspiring conversations throughout
this project.
We thank Fran\c{c}ois Bergeron
for suggesting that flow polytopes were related to Tesler matrices, and Ole Warnaar for showing us
simplifications of Gamma functions that led to the compact expression
on the right-hand-side of \eqref{eq:ctterm} from a more complicated
precursor. We also thank Yonggyu Lee for finding an error in a
previous version of the proof of Lemma~\ref{vertex-characterization}.

\end{document}